\documentclass[a4paper,reqno,10pt]{amsart}

\usepackage[
  colorlinks=true,
  linkcolor=blue,
  citecolor=blue,
  urlcolor=blue
]{hyperref}
\usepackage{a4wide}
\usepackage{amssymb,amsmath,amsthm,amsfonts,mathtools}
\usepackage{enumitem}
\setlist[enumerate,1]{label={\upshape(\arabic*)}}
\setlist[enumerate,2]{label={\upshape(\alph*)}}
\usepackage{tikz-cd}
\usepackage{verbatim}

\newtheorem{theorem}{Theorem}[section]
\newtheorem{proposition}[theorem]{Proposition}
\newtheorem{lemma}[theorem]{Lemma}
\newtheorem{corollary}[theorem]{Corollary}
\newtheorem{theoremi}{Theorem}

\theoremstyle{definition}
\newtheorem{definition}[theorem]{Definition}
\newtheorem{example}[theorem]{Example}
\newtheorem{remark}[theorem]{Remark}
\newtheorem{question}[theorem]{Question}
\newtheorem*{acknowledgments}{Acknowledgments}
\newtheorem*{aiuse}{Use of AI}
\newtheorem*{organization}{Organization}
\newtheorem*{conventions}{Conventions and notation}
\numberwithin{equation}{section}

\newcommand{\kfield}{k}
\newcommand{\C}{\mathbb C}
\newcommand{\End}{\operatorname{End}}
\newcommand{\Hom}{\operatorname{Hom}}
\newcommand{\Ext}{\operatorname{Ext}}
\newcommand{\Gr}{\operatorname{Gr}}
\newcommand{\Rep}{\operatorname{Rep}}
\newcommand{\GL}{\operatorname{GL}}
\DeclareMathOperator{\tors}{\mathsf{tors}}
\DeclareMathOperator{\modu}{\mathsf{mod}}

\newcommand{\T}{\mathcal T}
\newcommand{\Tgen}{\mathsf T}

\DeclareMathOperator{\Filt}{\mathsf{Filt}}
\DeclareMathOperator{\add}{\mathsf{add}}
\newcommand{\Sol}{\mathcal S}
\newcommand{\parts}{\mathcal P}
\newcommand{\bcount}{\mathrm b}

\title[Finiteness and growth of brick chain filtrations]
{Finiteness and growth of brick chain filtrations}
\author[H.\ Enomoto]{Haruhisa Enomoto}
\address{Parakeet Inc., Japan}
\email{haruhisa.enomoto.math@gmail.com}
\subjclass[2020]{16G10, 16G20, 14M15, 05E05}
\keywords{brick chain filtration, torsion class, Kronecker quiver,
quiver Grassmannian, submodule polytope, Littlewood--Richardson coefficient}
\date{\today}

\begin{document}

\begin{abstract}
We study Ringel's brick chain filtrations over finite-dimensional algebras:
filtrations whose factors are filtered by copies of individual bricks,
ordered so that morphisms from earlier bricks to later ones vanish.
First, using submodule varieties, we prove that the largest submodule of a
fixed module belonging to a torsion class takes only finitely many values
as the class varies, answering Pav\'on's question.
We deduce finiteness of brick chain filtrations and the bound $2^{d^2}$
for modules of dimension $d$.
For $\tau$-tilting finite algebras, we bound their number by the multinomial
coefficient determined by simple composition multiplicities.
Finally, we construct families of bricks over the three-arrow Kronecker
algebra whose filtration counts grow exponentially in the square of
composition length. We prove this growth using a Littlewood--Richardson
formula for submodule counts and the hook-length formula.
In particular, the counts eventually exceed the factorial of composition
length. These results answer Ringel's two questions.
\end{abstract}

\maketitle
\begingroup
\footnotesize
\setcounter{tocdepth}{2}
\tableofcontents
\endgroup
\enlargethispage{3pt}

\section{Introduction}

A composition series expresses a module in terms of simple modules. Let
$A$ be a finite-dimensional algebra over a field $k$,
and consider finite-dimensional right $A$-modules.
Ringel's brick chain filtrations use a larger class of factors: modules
filtered by copies of a single brick. Here a \emph{brick} is a module
whose endomorphism ring is a division ring. If $B$ is a brick, write
$\Filt B$ for the modules admitting a finite filtration whose factors are
isomorphic to $B$; the zero module is included via its empty filtration.
A \emph{brick chain filtration} of $M$ is a chain of submodules
\[
 0=M_0\subsetneq M_1\subsetneq\cdots\subsetneq M_t=M
\]
for which there exist bricks $B_1,\ldots,B_t$ such that
\[
 M_i/M_{i-1}\in\Filt B_i,\qquad
 \Hom_A(B_i,B_j)=0\quad(i<j).
\]
Thus a factor may contain several copies of its brick, while the order
of the factors is constrained by Hom-vanishing.

Ringel works more generally with modules of finite length over artin
algebras. In that setting, he proves that every module admits such a filtration
\cite[Theorem~1.2]{RingelFiltrations}. A filtration $(M_i)_i$ is
\emph{torsional} if $M_{i-1}$ belongs to the smallest torsion class
containing $M_i$ for every $i$. With this definition, Ringel proves that a module of
composition length $m$ has at most $m!$ torsional brick chain
filtrations \cite[Theorem~3.2]{RingelFiltrations}.
He then asks the following questions about all brick chain filtrations.

\begin{question}[{\cite[Questions following Theorem~3.2]{RingelFiltrations}}]\label{ques:ringel}
\leavevmode
\begin{enumerate}
\item Can a module have infinitely many brick chain filtrations?
\item Is there a module of composition length $m$ with more than $m!$
brick chain filtrations?
\end{enumerate}
\end{question}

For finite-dimensional algebras, we answer Question~\ref{ques:ringel}(1)
negatively and Question~\ref{ques:ringel}(2) affirmatively.

Let $\bcount_A(M)$ denote the number of chains $(M_i)_{i=0}^t$ forming
brick chain filtrations of $M$, and set $\bcount_A(0)=1$ for the empty
chain. We count actual chains of submodules of $M$.
The ordered isomorphism classes
$([B_1],\ldots,[B_t])$ form the \emph{type} of the filtration; this type
is determined by the chain
\cite[Lemma~9.7]{RingelFiltrations}.
Our first main result gives a bound depending only on the dimension of $M$.

\begin{theoremi}[= Theorem~\ref{thm:uniform}]
Let $A$ be a finite-dimensional algebra over a field $k$, and let $M$
be an $A$-module of dimension $d=\dim_k M$. Then
\[
 \bcount_A(M)\leq 2^{d^2}.
\]
\end{theoremi}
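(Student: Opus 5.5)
The plan is to turn each brick chain filtration into a chain in a finite set of canonical submodules of $M$, and then count such chains. For a torsion class $\T$ in $\modu A$, write $t_\T(M)$ for the largest submodule of $M$ lying in $\T$. Let $(M_i)_{i=0}^t$ be a brick chain filtration of $M$ with bricks $B_1,\dots,B_t$. Put $\T_i$ for the smallest torsion class containing $B_1,\dots,B_i$. The first step is to prove
\[
 M_i=t_{\T_i}(M)\qquad(0\le i\le t).
\]
\begin{itemize}
\item[$\subseteq$:] $M_i$ is filtered by the factors $M_j/M_{j-1}\in\Filt B_j$ with $j\le i$, so $M_i\in\T_i$.
\item[$\supseteq$:] It suffices to show $\Hom_A(X,M/M_i)=0$ for every $X\in\T_i$. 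Since $\T_i$ consists of modules filtered by quotients of direct sums of $B_1,\dots,B_i$, it is enough to show $\Hom_A(B_j,M/M_i)=0$ for $j\le i$. The module $M/M_i$ is filtered by modules in $\Filt B_l$ with $l>i$, and $\Hom_A(B_j,B_l)=0$ for $j<l$. Applying left exactness of $\Hom_A(B_j,-)$ along this filtration gives the vanishing.
\end{itemize}
Consequently every brick chain filtration is a chain, under inclusion, in the set
\[
 \Sol(M)=\{\,t_\T(M)\;:\;\T\in\tors A\,\}.
\]
Since the type of the filtration is determined by the chain \cite[Lemma~9.7]{RingelFiltrations}, distinct filtrations give distinct chains. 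Hence $\bcount_A(M)$ is at most the number of chains in $\Sol(M)$ that contain $0$ and $M$.

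The second step is to show that $\Sol(M)$ is finite, which answers Pav\'on's question. I would work in the submodule variety (quiver Grassmannian) $\Gr_e(M)$ of $e$-dimensional submodules, for each $e\le d$.
\begin{itemize}
\item $t_\T(M)$ is fully invariant, i.e. stable under $\End_A(M)$.
\item Its orbit under $\mathrm{Aut}_A(M)$ is therefore a single point.
\item The map $\T\mapsto t_\T(M)$ is monotone. The submodules in $\T$ form a constructible condition that is closed under sums, so for each $\T$ the torsion part is the unique maximal element of a constructible family.
\end{itemize}
From this I would argue that only finitely many points of $\Gr_e(M)$ arise. A Noetherian induction on constructible subsets, stratifying $\Gr_e(M)$ by the isomorphism type data that decide membership in $\T$, should produce at most one torsion submodule per stratum. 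This step is the main obstacle. The naive worry is that a one-parameter family of submodules, for example over the Kronecker algebra, could each be a torsion part. The point to exploit is that the torsion part is a sum of images of \emph{all} maps from $\T$. A family of submodules $N_\lambda\subseteq M$ that are all isomorphic to members of $\T$ therefore forces their sum into $t_\T(M)$, so genuinely continuous families collapse.

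The final step is the quantitative count. I would try to bound $|\Sol(M)|$ by $d^2$. The intended mechanism is that the number of values is controlled by the dimension of the ambient space $\End_k(M)$, which has dimension $d^2$. Concretely, I would index elements of $\Sol(M)$ by distinct $k$-subspaces of the form $\Hom_A(\,\cdot\,,M)$-traces, and argue that along refinements these traces strictly grow inside $\End_k(M)$. If that bound holds, then each chain is a subset of $\Sol(M)$, which gives
\[
 \bcount_A(M)\le 2^{|\Sol(M)|}\le 2^{d^2}.
\]
If the bound $|\Sol(M)|\le d^2$ fails, the fallback is to combine two facts. First, a chain has length at most $d+1$, with strictly increasing dimensions. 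Second, one would need a bound on the number of elements of $\Sol(M)$ of each fixed dimension $e$. These together would give a count of the form $\prod_e(\text{choices at level }e)$, and the aim would be to show this product is at most $2^{d^2}$.
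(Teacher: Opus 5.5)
Your first step is correct and is essentially the paper's Lemma~\ref{lem:filtration-solid} combined with Proposition~\ref{prop:torsion-parts}: every filtration term is a torsion part, equivalently a submodule $U$ with $\Hom_A(U,M/U)=0$. The finiteness step, however, is not an argument. Full invariance carries no information: for a brick $M$, $\mathrm{Aut}_A(M)=k^\times$ fixes every submodule. Your ``collapse'' heuristic applies only to one fixed $\T$, whereas a continuous family $N_\lambda$ could a priori consist of torsion parts for different classes $\T_\lambda$. The missing idea is to use the Hom-vanishing characterization directly. If $U$ is solid of dimension $s$, then $\{V\in\Gr_A(s,M)\mid\Hom_A(U,M/V)=0\}$ is Zariski open, as in Lemma~\ref{lem:hom-open} (nonvanishing of a maximal minor of a linear system whose coefficients are polynomial in the chart coordinates). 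This set contains $U$. Any $V$ in it contains $U$, because $U\hookrightarrow M\twoheadrightarrow M/V$ vanishes, so $V=U$. Thus solid submodules are isolated points of $\Gr_A(s,M)$, and Noetherianity gives finiteness.

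The quantitative step fails as planned. The bound $\#\Sol(M)\le d^2$ is false. For $A=k^d$ and $M=A_A$, all $2^d$ submodules are solid. The bricks of Theorems~\ref{thm:count} and~\ref{thm:growth} even have $e^{\Omega(d^2)}$ solid submodules of a single dimension vector. A strictly growing chain of subspaces of $\End_k(M)$ could only bound chain lengths, not the size of the poset. Your fallback needs exactly the input you do not supply: a bound on the number of solid submodules of each dimension $s$. The paper gets this from the affine B\'ezout inequality (Lemma~\ref{lem:bezout}, Proposition~\ref{prop:solid-bound}). After extending scalars to an algebraic closure, each of the $\binom ds$ standard charts of $\Gr(s,M)$ has $s(d-s)$ coordinates, and the submodule equations $a_{21}+a_{22}X=Xa_{11}+Xa_{12}X$ have degree at most two. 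Since solid submodules are isolated solutions, there are at most $\binom ds2^{s(d-s)}$ of them. Even granting this, multiplying choices over the levels $e=1,\ldots,d-1$ inside the fixed module $M$ only yields the exponent $\sum_e e(d-e)=(d^3-d)/6$, which exceeds $d^2$ for $d\ge7$. The paper instead recurses through quotients: a filtration is determined by its first term $U$ and the induced brick chain filtration of $M/U$. Induction on $d$ then gives
\[
 \bcount_A(M)\le\sum_{s=1}^d\binom ds2^{s(d-s)}2^{(d-s)^2}
 =2^{d^2}\sum_{s=1}^d\binom ds2^{-ds}<2^{d^2}.
\]
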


For split algebras, meaning that $\End_A(S)\simeq k$ as $k$-algebras
for every simple $A$-module $S$, Corollary~\ref{cor:uniform-length}
gives the bound $2^{m^2}$ in composition length $m$.

For a torsion class $\T$, the \emph{torsion part} $t_\T(M)$ is the
largest submodule of $M$ belonging to $\T$. We prove that, for every
fixed module $M$, only finitely many submodules occur as $t_\T(M)$
when $\T$ ranges over the torsion classes
(Theorem~\ref{thm:solid-finite}). This answers Pav\'on's
Question~2.19 in \cite{Pavon} for finite-dimensional algebras.

This result also proves finiteness of brick chain filtrations:
every filtration term is a torsion part
(Lemma~\ref{lem:filtration-solid}). Torsion parts are precisely the
submodules $U$ satisfying $\Hom_A(U,M/U)=0$, and this vanishing makes
them isolated points of the varieties parametrizing submodules of
each dimension. A B\'ezout inequality applied to the defining equations
then gives the estimate in Theorem~A.

For $\tau$-tilting finite algebras, every module of composition length $m$
has at most $m!$ brick chain filtrations.
Recall from \cite{DIJ} that an algebra is $\tau$-tilting finite if it has
only finitely many isomorphism classes of basic $\tau$-tilting modules
\cite{AIR}. This holds if and only
if the algebra has only finitely many torsion classes
\cite[Corollary~2.9(4) and Theorem~3.8]{DIJ}.
In particular, every representation-finite algebra has this property.

\begin{theoremi}[= Theorem~\ref{thm:multinomial}]
Let $A$ be a $\tau$-tilting finite algebra over a field $k$, and let
$S_1,\ldots,S_r$ represent its simple modules. Let $M$ have multiplicity
$d_i$ for $S_i$ and length $m=d_1+\cdots+d_r$. Then
\[
 \bcount_A(M)\leq\frac{m!}{d_1!\cdots d_r!}\leq m!.
\]
\end{theoremi}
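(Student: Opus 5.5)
We will construct an injection from brick chain filtrations of $M$ into words of length $m$ in the alphabet $\{1,\ldots,r\}$ with letter $i$ used exactly $d_i$ times. The number of such words is $m!/(d_1!\cdots d_r!)$, which gives the first inequality; the second is the trivial bound on multinomial coefficients.

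First we refine each filtration. Every term of a brick chain filtration is a torsion part $t_\T(M)$ (Lemma~\ref{lem:filtration-solid}). For a $\tau$-tilting finite algebra, every torsion class is functorially finite and the lattice $\tors A$ is finite. In a finite lattice every chain of torsion classes can be refined to a maximal chain. Along a covering relation $\T'\lessdot\T$ the brick labelling of \cite{DIJ} applies: the subquotient $t_\T(M)/t_{\T'}(M)$ is filtered by a single brick $B$, namely the label of the arrow.

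To attach a word to a filtration $0=M_0\subsetneq\cdots\subsetneq M_t=M$, we choose torsion classes $\T_1\subseteq\cdots\subseteq\T_t$ realizing the terms, taken as canonically as possible, for instance $\T_i=\Filt\Tgen(B_1\oplus\cdots\oplus B_i)$, the smallest torsion class containing $B_1,\ldots,B_i$. The Hom-vanishing $\Hom_A(B_i,B_j)=0$ for $i<j$ should give $t_{\T_i}(M)=M_i$. We then refine to a maximal chain in $\tors A$ starting at $0$. Composition factors contributed along each covering step are those of copies of a brick. The plan is to produce the word by recording, step by step, the composition factors added to the torsion part of $M$. The order of factors within a brick is chosen canonically, for example via a fixed composition series determined by the brick itself.

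The main obstacle is injectivity. The word must determine the chain $(M_i)$, and the bare sequence of composition factors does not obviously do so. We would try the following inductive argument on $m$. The first brick $B_1$ must be a submodule of $M$ with $M_1=t_{\T_1}(M)$. The key claim is that $M_1$ is determined by the set of simple tops of $B_1$ together with the first letters of the word. This should follow from the fact that in a $\tau$-tilting finite algebra the minimal torsion classes are $\Filt\Tgen(S)$ for simples $S$, so the first letter detects the first atom. We would then pass to $M/M_1$ and apply induction. If this direct encoding fails, the fallback is to encode by maximal chains of torsion classes through the $g$-vector fan and count the distinct values of $(\dim t_\T(M))$ along maximal chains. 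Distinct filtrations yield distinct sequences of dimension vectors of torsion parts $t_\T(M)$ for $\T$ in a maximal chain. Such sequences increase by one composition factor at a time, hence are lattice paths from $0$ to $(d_1,\ldots,d_r)$, and there are exactly $m!/(d_1!\cdots d_r!)$ of them. To make this work we would need that, along a maximal chain, each composition factor is added one at a time in some refinement, and that distinct filtrations give distinct paths.
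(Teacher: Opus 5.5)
There is a genuine gap, and it sits at the step you yourself flag as the main obstacle: neither of your encodings is injective as described.

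\textbf{The first encoding collides.} Take $A=k(1\to2)$ and $M=X$, the indecomposable projective $k\xrightarrow{1}k$ with socle $S_2$, as in Example~\ref{ex:pentagon}. Its only brick chain filtrations are $0\subsetneq X$, of type $(X)$, and $0\subsetneq S_2\subsetneq X$, of type $(S_2,S_1)$. The first comes from the maximal chain $0\lessdot\add S_1\lessdot\add(X\oplus S_1)\lessdot\modu A$, whose middle cover adds all of $X$ at once. Listing the factors of $X$ along its composition series $0\subset S_2\subset X$ gives the word $(2,1)$. This is also the word of the second filtration.

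\textbf{The fallback rests on a false claim.} The same example shows that along a cover the torsion part need not grow by a single composition factor; here it jumps by $(1,1)$. A maximal chain of $\tors A$ admits no further refinement. So each jump must be interpolated by unit steps, and nothing in your proposal stops the interpolated points from passing through vectors of other torsion parts, which destroys injectivity. Your heuristic for recovering $M_1$ from the first letters is also unsupported: the atoms $\Filt S$ of $\tors A$ tell you only a simple module, not which submodule $M_1$ is.

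\textbf{How the paper closes the gap.} It uses the submodule polytope $P(M)$. By Theorem~\ref{thm:polytope-torsion}, the vertices of $P(M)$ are the vectors $[t_\T(M)]$, and its edges come from covers. A midpoint argument with $U\cap V$ and $U+V$ shows that a submodule whose vector is a vertex is determined by that vector. Hence a filtration is recovered from its sequence of vectors, and consecutive vectors span increasing edges (Proposition~\ref{prop:filtration-edges}).

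You assert that distinct filtrations give distinct vector sequences, but this needs exactly that argument. It fails without $\tau$-tilting finiteness: the Kronecker modules of Theorem~\ref{thm:count} have many solid submodules with the same dimension vector.

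The decisive further idea is Lemma~\ref{lem:lattice-paths}. For each edge $[u,v]$, choose a functional $\theta$ maximized on $P(M)$ exactly along that edge. Order the unit steps making up $v-u$ with positive $\theta$-values first, then zero, then negative. Every intermediate point then satisfies $\theta\geq\max_{P(M)}\theta$, so the lattice path meets no vertex other than $u$ and $v$. The vertices it visits therefore recover the edge path. In the example, this forces the edge from $0$ to $(1,1)$ through the non-vertex $(1,0)$, which separates the two filtrations. Note that the correct interpolation depends on the whole polytope $P(M)$, not only on the brick being added, so a choice made from a composition series of the brick alone cannot be expected to work.
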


For Theorem~B, we use the submodule polytope: the convex hull of the
simple composition multiplicity vectors of submodules of $M$.
Ringel's description of filtrations by chains of torsion classes and
the description of polytope edges by
Aoki--Higashitani--Iyama--Kase--Mizuno give an injection from brick
chain filtrations to increasing edge paths. We bound their number
by counting lattice paths. For nonsemisimple modules over
$\tau$-tilting finite algebras over an algebraically closed field,
Proposition~\ref{prop:nonsemisimple} gives the sharp bound
$m!/2+(m-1)!$. For $m\geq3$, it also shows that equality with $m!$
characterizes direct sums of pairwise nonisomorphic simple modules.

For general algebras, the exponent in Theorem~A has the correct
quadratic order. Let $Q_q$
denote the quiver with vertices $1,2$ and $q$ arrows $1\to2$.
Write $\Rep(Q_q,(a,b))$ for the affine space of representations with
vertex spaces $Me_1=k^a$ and $Me_2=k^b$, given by the $q$ arrow
maps $k^a\to k^b$ of the corresponding right $kQ_q$-module $M$.
Here and throughout, logarithms are natural.

\begin{theoremi}[= Theorem~\ref{thm:growth}]
Let $k$ be algebraically closed and $A=kQ_3$. There is a constant
$C>0$ such that, for every $n\geq1$,
a nonempty Zariski-open subset of $\Rep(Q_3,(3n,2n))$ consists of bricks
$M$ satisfying
\[
 \log\bcount_A(M)\geq (1-\log 2)n^2-Cn.
\]
\end{theoremi}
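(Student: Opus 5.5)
We will produce many brick chain filtrations of a generic $M\in\Rep(Q_3,(3n,2n))$ by making every term a torsion part. Lemma~\ref{lem:filtration-solid} and the surrounding discussion say that torsion parts are exactly the submodules $U\subseteq M$ with $\Hom_A(U,M/U)=0$. So the plan is to find many such submodules, arrange them into chains, and check that each chain is a brick chain filtration.

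The simplest shape is a two-step filtration $0\subsetneq U\subsetneq M$. Here we need $U$ and $M/U$ to be bricks (or to lie in $\Filt$ of bricks $B_1,B_2$), and we need $\Hom_A(U,M/U)=0$. Different $U$ give different chains. This reduces the theorem to counting suitable submodules of a single dimension vector $(a,b)$.

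For the count, we choose $(a,b)$ near $(n,n)$ so that the quiver Grassmannian $\Gr_{(a,b)}(M)$ is finite and reduced for generic $M$. Its expected dimension is
\[
\langle (a,b),(3n-a,2n-b)\rangle=a(3n-a)+b(2n-b)-3a(2n-b).
\]
At $(a,b)=(n,n)$ this gives $2n^2+n^2-3n^2=0$, so $\Gr_{(n,n)}(M)$ is zero-dimensional.

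Generically $\Ext^1$ between generic subrepresentations and quotients vanishes. Then $\Hom(U,M/U)=0$ follows from the Euler form being $0$, so every point of $\Gr_{(n,n)}(M)$ is a torsion part. The same genericity gives Schofield-type results: $U$ of dimension $(n,n)$ and $M/U$ of dimension $(2n,n)$ are generic representations with real Schur roots, hence bricks.

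So $\bcount_A(M)$ is at least the number of points of $\Gr_{(n,n)}(M)$, which equals the Euler characteristic (the degree of a top Chern class). The paper's promised Littlewood--Richardson formula for submodule counts should compute this. Realize $\Gr_{(n,n)}(M)$ as the zero locus of a section of a bundle on $\Gr(n,3n)\times\Gr(n,2n)$: each of the three arrows must send $U_1$ into $U_2$, a section of $\mathcal{H}om(\mathcal U_1,\mathcal Q_2)^{\oplus3}$ of rank $3n^2$, equal to the dimension $2n^2+n^2$. The top Chern class of $(\mathcal U_1^\vee\otimes\mathcal Q_2)^{\oplus3}$ expands via Schur functions and Littlewood--Richardson coefficients. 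Pushing forward leaves a sum of products of counts $f^\lambda$ of standard tableaux of rectangular-ish shapes, which the hook-length formula evaluates.

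The main obstacle is the asymptotics. I expect the count to be at least a single dominant term, something like $f^{\lambda}$ for an $n\times n$ or $n\times 2n$ rectangle, divided or multiplied by comparable quantities. Stirling applied to the hook-length product then gives
\[
\log\bigl((n^2)!/\textstyle\prod h\bigr)=n^2\log n^2-n^2-\sum\log h+O(n\log n),
\]
and the square's hook sum should yield the constant $1-\log 2$. The term $O(n\log n)$ is not $O(n)$, so the choice of shape must make the $\log n$ terms cancel exactly. Alternatively I would use the sharper $(1-\log 2)n^2-Cn$ comparison valid for rectangles.

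If the two-step count falls short, I would refine it using longer chains built from torsion parts nested in one another, which multiplies counts. All terms must be positive so that dropping the others gives a lower bound. The generic conditions are finitely many open conditions, which gives the nonempty Zariski-open set.
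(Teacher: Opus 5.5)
Your setup matches the paper's. You count two-step filtrations $0\subsetneq U\subsetneq M$, with $U$ running over the finite, reduced quiver Grassmannian $\Gr_{(n,n)}(M)$, and you compute the count by Schubert calculus on $\Gr(n,3n)\times\Gr(n,2n)$. The gap is the step that actually proves the growth rate. The degree of your top Chern class is not a sum of products of numbers $f^\lambda$. It is
\[
 \sum_{\lambda_1,\lambda_2,\lambda_3\in\parts_{n,n}}
 c(\lambda_1,\lambda_2,\lambda_3;(2n)^n)\,
 c(\overline\lambda_1,\overline\lambda_2,\overline\lambda_3;n^n),
\]
a sum of products of Littlewood--Richardson coefficients, and no hook-length formula evaluates it. Two ideas are missing.

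\begin{itemize}
\item The $\GL_n$-duality $V_{\overline\lambda}\simeq\det^n\otimes V_\lambda^*$, combined with tensor--Hom adjunction, shows that the two factors are equal. After reindexing, the count becomes $K(3,n,n)=\sum_{\lambda_1,\lambda_2,\nu\in\parts_{n,n}}c(\lambda_1,\lambda_2;\nu)^2$.
\item This sum of squares must be bounded below. Reading $c(\lambda_1,\lambda_2;\nu)$ as the multiplicity of $S^{\lambda_1}\otimes S^{\lambda_2}$ in the restriction of $S^\nu$ to $S_{e_1}\times S_{e_2}$ gives $f^\nu=\sum c(\lambda_1,\lambda_2;\nu)f^{\lambda_1}f^{\lambda_2}$. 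Apply Cauchy--Schwarz with $\sum_{\lambda\vdash e}(f^\lambda)^2=e!$. Then sum over $e_1+e_2=d=|\nu|$, using that nonzero terms have $\lambda_i\subseteq\nu$ and hence lie in the box. This yields $K(3,n,n)\geq 2^d d!/H(\nu)^2$ for every $\nu\in\parts_{n,n}$.
\end{itemize}

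Your guessed shape also fails with this bound. For the $n\times n$ square one gets $\log(2^d d!/H(\nu)^2)=(2-3\log2)n^2+o(n^2)$, which is negative. The constant $1-\log 2$ comes from the staircase $\delta_n=(n,n-1,\ldots,1)$, whose hooks are the odd numbers $2t+1$ with multiplicity $n-t$. The $O(n)$ error, which you correctly worried about, follows cleanly from the exact ratio $L_n/L_{n-1}$ and Stirling's formula with its logarithmic term. Your fallback of using longer chains is not needed and is not substantiated.

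There are also secondary problems. The vectors $(n,n)$ and $(2n,n)$ are imaginary roots of $Q_3$, since the Tits form takes the value $-n^2$ on both; they are not real Schur roots. The statement also requires $M$ itself to be a brick, so you must show that the brick loci for $(n,n)$, $(2n,n)$ and $(3n,2n)$ are all nonempty. The paper does this with explicit bricks $U,V$ satisfying $\Hom(U,V)=\Hom(V,U)=0$, and shows that every nonsplit extension of $V$ by $U$ is a brick.
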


These modules have dimension and composition length $5n$. Consequently,
no uniform bound of the form $2^{o(d^2)}$ in dimension $d$, or
$2^{o(m^2)}$ in composition length $m$, is possible, even for bricks
over one fixed hereditary algebra.

For the five-arrow Kronecker algebra over an algebraically closed field,
we obtain an explicit example of length $244$.
A nonempty open subset of
$\Rep(Q_5,(167,77))$ consists of bricks of
composition length $167+77=244$ with more than $16\cdot244!$ two-step
brick chain filtrations $0\subsetneq U\subsetneq M$; see
Corollary~\ref{cor:244}. Thus Question~\ref{ques:ringel}(2) has an
affirmative answer even for a seven-dimensional hereditary algebra with
radical square zero.
Moreover, the filtrations counted here
have pairwise distinct types, so there are also more than $16\cdot244!$
types.

For these lower bounds, we count submodules $U$ with $U$ and $M/U$
bricks and $\Hom_A(U,M/U)=0$.
Schubert calculus on products of Grassmannians expresses
this submodule count through
Littlewood--Richardson coefficients. Their interpretation as
multiplicities for symmetric groups, together with the hook-length
formula for standard Young tableaux, gives the required estimates.

\begin{organization}
Section~\ref{sec:finiteness} identifies torsion parts with submodules $U$
satisfying $\Hom_A(U,M/U)=0$, proves finiteness of brick chain
filtrations, and answers Pav\'on's question.
Section~\ref{sec:upper} applies the B\'ezout inequality to affine charts
of the varieties of submodules to prove Theorem~A, and deduces the bound in composition
length for split algebras by Morita equivalence.
Section~\ref{sec:finite-type} relates filtrations to submodule polytopes,
proves Theorem~B and characterizes modules of composition length $m$
with $\bcount_A(M)=m!$.
Section~\ref{sec:kronecker} constructs bricks whose two-step filtrations
are counted by a sum of squares of Littlewood--Richardson coefficients.
Estimating this sum proves Theorem~C and gives the length-$244$ example
of Corollary~\ref{cor:244}. Its arithmetic is recorded in
Appendix~\ref{sec:certificate}.
\end{organization}

\begin{conventions}
Throughout, $k$ is a field. All algebras are associative, unital and
finite-dimensional over $k$, and all modules are finite-dimensional
right modules. Section~\ref{sec:kronecker} assumes that $k$ is
algebraically closed.
We write $\ell(M)$ for composition length and $\bcount_A(M)$ for the
number of brick chain filtrations of $M$, with $\bcount_A(0)=1$.
All logarithms are natural.
All subcategories are full and closed under isomorphism.
We write $\add X$ for the direct summands of finite direct sums of
copies of $X$.
Paths are multiplied in traversal order; an arrow $i\to j$ therefore acts
from $Me_i$ to $Me_j$. Matrices for quiver representations act on column
vectors.
\end{conventions}

\begin{acknowledgments}
The author thanks Sota Asai for helpful comments on the references.
\end{acknowledgments}

\begin{aiuse}
GPT-5.6-Sol and GPT-6-Astra were used for mathematical research and
manuscript writing under the author's direction. Opus~5 was used for
independent manuscript reviewing. The author determined the structure
and revised the exposition. The author is responsible for the result.
\end{aiuse}

\section{Finiteness of torsion parts and brick chain filtrations}\label{sec:finiteness}

We prove that a fixed module has only finitely many torsion parts as the
torsion class varies. Every term of a brick chain filtration is one of
these torsion parts, so finiteness of the filtrations follows.
Throughout this section, $M$ denotes a fixed $A$-module.

\subsection{Torsion parts of a fixed module}

We characterize the submodules that occur as torsion parts of a fixed
module and describe the torsion classes giving each part.

A torsion class in $\modu A$ is a subcategory closed under finite direct
sums, factor modules and extensions. For a torsion class $\T$, the sum
of the submodules of $M$ belonging to $\T$ is its largest such submodule,
denoted by $t_{\T}(M)$ and called the \emph{torsion part} of $M$ with
respect to $\T$. The quotient $M/t_\T(M)$ belongs to
\[
 \T^\perp=\{X\in\modu A\mid \Hom_A(T,X)=0\text{ for all }T\in\T\}.
\]
The pair $(\T,\T^\perp)$ is the associated torsion pair. Write
$\tors A$ for the set of torsion classes ordered by inclusion, and
$\Tgen(U)$ for the smallest torsion class containing a module $U$.
The exact sequence
\begin{equation}\label{eq:torsion-sequence}
 0\longrightarrow t_\T(M)\longrightarrow M
   \longrightarrow M/t_\T(M)\longrightarrow0,
 \qquad t_\T(M)\in\T,\quad M/t_\T(M)\in\T^\perp,
\end{equation}
is the \emph{torsion sequence} of $M$.

\begin{definition}\label{def:solid-submodule}
A submodule $U\subseteq M$ is \emph{solid in $M$} if
$\Hom_A(U,M/U)=0$. Set
\[
 \Sol_A(M)=\{U\subseteq M\mid \Hom_A(U,M/U)=0\}.
\]
\end{definition}

Ringel calls a filtration $(M_i)_i$ solid when
$\Hom_A(M_i/M_{i-1},M_j/M_{j-1})=0$ for every $i<j$
\cite[Section~10.1]{RingelFiltrations}. Hence the two-step filtration
$0\subsetneq U\subsetneq M$ is solid precisely when $U$ is solid in $M$
in the sense of Definition~\ref{def:solid-submodule}.
Pav\'on partitions $\tors A$ by equality of the torsion parts
$t_\T(M)$ and observes that every class of this partition is an interval
\cite[Section~1.3]{Pavon}. To describe these intervals explicitly, we
first record the elementary characterization of a torsion part used below.

\begin{lemma}\label{lem:torsion-part-characterization}
Let $\T\in\tors A$ and $U\subseteq M$. Then
\[
 t_\T(M)=U\quad\Longleftrightarrow\quad
 U\in\T\ \text{and}\ M/U\in\T^\perp.
\]
\end{lemma}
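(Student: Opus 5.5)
The forward implication comes straight from the torsion sequence \eqref{eq:torsion-sequence}. If $t_\T(M)=U$, then $U\in\T$ by the definition of $t_\T(M)$, and $M/U=M/t_\T(M)\in\T^\perp$ as recorded above.

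For the converse, assume $U\in\T$ and $M/U\in\T^\perp$. Since $t_\T(M)$ is the largest submodule of $M$ lying in $\T$, the inclusion $U\subseteq t_\T(M)$ is immediate. For the reverse inclusion, consider the composite
\[
 t_\T(M)\hookrightarrow M\twoheadrightarrow M/U .
\]
Its source lies in $\T$ and its target lies in $\T^\perp$, so it vanishes. Hence $t_\T(M)\subseteq U$, and therefore $U=t_\T(M)$.

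The only input beyond the definitions is that $t_\T(M)$ itself belongs to $\T$. Since the sum of submodules in $\T$ is a quotient of their direct sum, this uses closure of $\T$ under finite direct sums and factor modules, together with finiteness of dimension so that the sum is finite. The paper has already stated this when introducing $t_\T(M)$, so I do not expect any real obstacle.
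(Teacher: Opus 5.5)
Your proof is correct and follows essentially the same route as the paper: the forward direction from the torsion sequence, and the converse by combining maximality of $t_\T(M)$ with the vanishing of $t_\T(M)\hookrightarrow M\twoheadrightarrow M/U$. The paper phrases the second step for an arbitrary $V\subseteq M$ in $\T$ and then takes $V=t_\T(M)$, which is the same argument.
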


\begin{proof}
If $t_\T(M)=U$, the two conditions follow from the torsion sequence
\eqref{eq:torsion-sequence}. Conversely, suppose that $U\in\T$ and
$M/U\in\T^\perp$, and let $V\subseteq M$ belong to $\T$. The composite
$V\hookrightarrow M\twoheadrightarrow M/U$ is zero because
$V\in\T$ and $M/U\in\T^\perp$, so $V\subseteq U$.
Taking $V=t_\T(M)$ gives $t_\T(M)\subseteq U$. The reverse inclusion
follows because $t_\T(M)$ is the largest submodule of $M$ in $\T$.
\end{proof}

The following proposition identifies the possible torsion parts with
the solid submodules of $M$ and gives the intervals explicitly.

\begin{proposition}\label{prop:torsion-parts}
For $U\subseteq M$, the following conditions are equivalent:
\begin{enumerate}
\item $U\in\Sol_A(M)$.
\item $U=t_{\T}(M)$ for some $\T\in\tors A$.
\item $U=t_{\Tgen(U)}(M)$.
\end{enumerate}
If these conditions hold, then
\begin{equation}\label{eq:interval}
 \{\T\in\tors A\mid t_{\T}(M)=U\}
   =[\Tgen(U),{}^\perp(M/U)],
\end{equation}
where ${}^\perp(M/U)=\{X\mid\Hom_A(X,M/U)=0\}$.
\end{proposition}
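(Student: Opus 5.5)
We prove the cycle of implications (3)$\Rightarrow$(2)$\Rightarrow$(1)$\Rightarrow$(3) and then the interval description, all by repeated use of Lemma~\ref{lem:torsion-part-characterization}. The implication (3)$\Rightarrow$(2) is trivial: take $\T=\Tgen(U)$. For (2)$\Rightarrow$(1), suppose $U=t_\T(M)$. The torsion sequence \eqref{eq:torsion-sequence} gives $U\in\T$ and $M/U\in\T^\perp$, so $\Hom_A(U,M/U)=0$ and $U$ is solid.

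The substantive step is (1)$\Rightarrow$(3). Assume $\Hom_A(U,M/U)=0$. Trivially $U\in\Tgen(U)$, so by Lemma~\ref{lem:torsion-part-characterization} it suffices to show $M/U\in\Tgen(U)^\perp$. For this I would use that ${}^\perp(M/U)$ is a torsion class. It is closed under direct sums and factor modules, since a map from a quotient pulls back to a map from the original module, and it is closed under extensions by left exactness of $\Hom_A(-,M/U)$. It contains $U$ by solidity, hence contains $\Tgen(U)$. This shows $\Hom_A(T,M/U)=0$ for all $T\in\Tgen(U)$, as required. I expect this to be the only point needing an actual argument. Its content is that Hom-vanishing against the single module $U$ propagates to the whole torsion class it generates.

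For the interval \eqref{eq:interval}, assume the equivalent conditions hold. If $t_\T(M)=U$, then $U\in\T$, giving $\Tgen(U)\subseteq\T$. Also $M/U\in\T^\perp$, giving $\T\subseteq{}^\perp(M/U)$. Conversely, let $\Tgen(U)\subseteq\T\subseteq{}^\perp(M/U)$. Then $U\in\T$, and every $T\in\T$ satisfies $\Hom_A(T,M/U)=0$, so $M/U\in\T^\perp$. Lemma~\ref{lem:torsion-part-characterization} then yields $t_\T(M)=U$. The interval is meaningful, that is, ${}^\perp(M/U)$ lies in $\tors A$, by the closure properties verified above.
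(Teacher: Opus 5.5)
Your proposal is correct and follows the paper's proof essentially step for step: you use the same cycle of implications, the same key observation that ${}^\perp(M/U)$ is a torsion class containing $U$ and hence $\Tgen(U)$, and the same reduction of the interval description to Lemma~\ref{lem:torsion-part-characterization}. Your explicit remark that ${}^\perp(M/U)\in\tors A$, so that the interval is taken in $\tors A$, is left implicit in the paper.
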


\begin{proof}
(1) $\Rightarrow$ (3)
Left exactness of $\Hom_A(-,M/U)$ shows that ${}^\perp(M/U)$ is
closed under factor modules and extensions; closure under finite sums
follows from additivity. Since
$\Hom_A(U,M/U)=0$, this class contains $U$, and therefore contains
$\Tgen(U)$. Thus $\Hom_A(X,M/U)=0$ for every $X\in\Tgen(U)$,
which says that $M/U\in\Tgen(U)^\perp$. Since $U\in\Tgen(U)$,
Lemma~\ref{lem:torsion-part-characterization}
therefore gives (3).

(3) $\Rightarrow$ (2) Take $\T=\Tgen(U)$.

(2) $\Rightarrow$ (1)
The torsion sequence has $U\in\T$ and $M/U\in\T^\perp$, so
$\Hom_A(U,M/U)=0$.

Finally, Lemma~\ref{lem:torsion-part-characterization} shows that the
condition $t_\T(M)=U$ is equivalent to $U\in\T$ and
$M/U\in\T^\perp$. These conditions mean, respectively,
$\Tgen(U)\subseteq\T$ and $\T\subseteq{}^\perp(M/U)$, proving
\eqref{eq:interval}.
\end{proof}

\subsection{Hom-vanishing and isolated submodules}

We prove finiteness by showing that every solid submodule is an isolated
point in a variety of submodules. For
$0\leq s\leq\dim_\kfield M$, write $\Gr_A(s,M)$ for the algebraic subset of the
Grassmannian $\Gr(s,M)$ consisting of the $s$-dimensional submodules.
Here we use the sets of $k$-rational points with their Zariski topology.
The following coordinates describe the equations of $\Gr_A(s,M)$ and
the Hom spaces needed in the proof.

Fix a basis of $M$ and choose a coordinate decomposition $M=P\oplus Q$
of the underlying vector space, with $\dim P=s$. Relative to the ordered
decomposition $P\oplus Q$, matrices act on column vectors.
The subspaces whose projection to $P$ is an isomorphism form an affine
chart of $\Gr(s,M)$. They are the graphs
\[
 V_X=\{(p,Xp)\mid p\in P\},\qquad X\in\Hom_\kfield(P,Q).
\]
For $a\in A$, write the matrix of $m\mapsto ma$ as
$\left(\begin{smallmatrix}a_{11}&a_{12}\\a_{21}&a_{22}\end{smallmatrix}\right)$.
Its value on $(p,Xp)$ lies in the graph precisely when
\begin{equation}\label{eq:graph-invariance}
 a_{21}+a_{22}X=Xa_{11}+Xa_{12}X.
\end{equation}
It suffices to impose these equations for a basis of $A$. As $P$ ranges
over the coordinate $s$-planes, these charts cover $\Gr(s,M)$. Thus
$\Gr_A(s,M)$ is closed and has a finite cover by algebraic subsets of
affine spaces. The Hilbert basis theorem makes each chart Noetherian,
and hence $\Gr_A(s,M)$ is Noetherian.
The only quadratic term is $Xa_{12}X$, so the equations on each chart
have degree at most two. We will use this degree bound to count solid
submodules in Proposition~\ref{prop:solid-bound}.

For a fixed module $F$, we show that $\Hom_A(F,M/V)=0$ is an open
condition as $V$ varies.

\begin{lemma}\label{lem:hom-open}
For every $A$-module $F$ and every $0\leq s\leq\dim_k M$, the subset
\[
 \{V\in\Gr_A(s,M)\mid\Hom_A(F,M/V)=0\}
\]
is Zariski open.
\end{lemma}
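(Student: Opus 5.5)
Since $\Gr_A(s,M)$ is covered by finitely many open affine charts (the graphs $V_X$ for the coordinate $s$-planes $P$), openness is local. So I fix one chart, with $P$, $Q$ and $X\in\Hom_k(P,Q)$ constrained by \eqref{eq:graph-invariance}, and prove that the locus where $\Hom_A(F,M/V_X)=0$ is open there. The plan is to write $\Hom_A(F,M/V_X)$ as the kernel of a linear map whose matrix entries are polynomial in $X$. Nonvanishing of this kernel is then a rank condition, so it is closed, and its complement is open.

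For the coordinates, the projection $M\to Q$ along $P$ is not a module map. However, on the chart the composite $Q\hookrightarrow M\twoheadrightarrow M/V_X$ is a linear isomorphism, because $M=V_X\oplus Q$ as vector spaces. Hence we may identify $M/V_X$ with $Q$. To compute the transported action of $a\in A$, write $(p,q)\equiv(0,q-Xp)$ modulo $V_X$. Then $q\mapsto qa$ becomes
\[
 q\longmapsto a_{22}q - X a_{12} q=(a_{22}-Xa_{12})q .
\]
This is polynomial, indeed affine-linear, in $X$. Now fix a basis $a^{(1)},\dots,a^{(r)}$ of $A$ and the matrices $f^{(j)}$ of right multiplication on $F$. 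The space $\Hom_A(F,M/V_X)$ is the set of $\phi\in\Hom_k(F,Q)$ with
\[
 \phi f^{(j)}=(a^{(j)}_{22}-Xa^{(j)}_{12})\phi\quad\text{for all }j,
\]
that is, the kernel of a linear map $L_X\colon\Hom_k(F,Q)\to\Hom_k(F,Q)^{r}$ with entries polynomial in $X$.

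With this description, $\Hom_A(F,M/V_X)=0$ holds exactly when $L_X$ is injective, i.e. has rank $N=\dim_k F\cdot\dim Q$. This holds exactly when some $N\times N$ minor of $L_X$ is nonzero. Its complement is the common zero set of all these minors, which are polynomials in $X$, so it is Zariski closed in the chart. Intersecting with $\Gr_A(s,M)$ and gluing over the finitely many charts then gives the openness claimed in Lemma~\ref{lem:hom-open}.

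The main point needing care is the second step: checking that the quotient action in these coordinates really is $a_{22}-Xa_{12}$. This uses the invariance equation \eqref{eq:graph-invariance} to ensure the action is well defined on $M/V_X$. Everything else is standard semicontinuity of rank.
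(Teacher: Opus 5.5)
Your proposal is correct and follows essentially the same route as the paper: identify $M/V_X$ with $Q$ via $(p,q)\mapsto q-Xp$, read off the quotient action $a_{22}-Xa_{12}$, write $\Hom_A(F,M/V_X)$ as the kernel of a linear system in $\phi$ with coefficients polynomial in $X$, and conclude on each chart by the nonvanishing of some maximal minor. Your remark that the invariance equation \eqref{eq:graph-invariance} is what makes this formula a genuine module action on the quotient is a fair point that the paper leaves implicit.
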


\begin{proof}
On the chart above, $(p,q)\mapsto q-Xp$ identifies $M/V_X$ with $Q$.
Applying this map to $(0,q)a$ shows that the action of $a$ on the
quotient is $a_{22}-Xa_{12}$. If $F(a)$ is the action of $a$ on $F$,
a linear map $\phi:F\to Q$ is $A$-linear exactly when
\[
 (a_{22}-Xa_{12})\phi=\phi F(a)
\]
for every basis element $a$ of $A$. These are linear equations in
$\phi$, with coefficients polynomial in $X$. Their common kernel is
zero exactly when their coefficient matrix has full column rank.
This condition is open, since it is the nonvanishing of at least one
maximal minor. This proves openness on each chart of the cover above.
\end{proof}

Pav\'on asks whether, for a fixed module $M$, the set of submodules
$t_\T(M)$ is finite as $\T$ ranges over torsion classes
\cite[Question~2.19]{Pavon}. The following theorem answers this question
affirmatively for finite-dimensional algebras.

\begin{theorem}\label{thm:solid-finite}
Let $M\in\modu A$ and put $d=\dim_\kfield M$. Then the following hold.
\begin{enumerate}
\item\label{item:solid-isolated} For every $0\leq s\leq d$ and every
$U\in\Sol_A(M)$ of dimension $s$, the point $U$ is isolated in
$\Gr_A(s,M)$.
\item\label{item:solid-finite} The set $\Sol_A(M)$ is finite.
\item\label{item:solid-intervals} The fibers of
$\T\mapsto t_\T(M)$ partition $\tors A$ into finitely many intervals
indexed by $U\in\Sol_A(M)$.
\end{enumerate}
\end{theorem}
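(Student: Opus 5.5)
The plan is to prove (1) by an openness argument, deduce (2) from Noetherianity, and read off (3) from Proposition~\ref{prop:torsion-parts}. For (1), fix $U\in\Sol_A(M)$ of dimension $s$. By Lemma~\ref{lem:hom-open} with $F=U$, the set
\[
 \mathcal O=\{V\in\Gr_A(s,M)\mid\Hom_A(U,M/V)=0\}
\]
is Zariski open in $\Gr_A(s,M)$, and it contains $U$ by solidity. I claim $\mathcal O\cap Z=\{U\}$ for every irreducible component $Z$ of $\Gr_A(s,M)$ containing $U$. This is slightly weaker than showing $\mathcal O=\{U\}$ outright, which need not hold, since $\mathcal O$ may contain points far from $U$.

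The key step, and the main obstacle, is showing that no other point of $\mathcal O$ lies on a component through $U$. Working over $k$-rational points, the natural tool is a tangent-space computation. On the affine chart around $U$, with $P$ chosen so that $U=V_0$ after a change of basis, linearize the equations \eqref{eq:graph-invariance} at $X=0$. The linearization reads $a_{22}X-Xa_{11}=0$ (using $a_{21}=0$ because $U$ is a submodule). This says exactly that $X\in\Hom_A(U,M/U)$, so the Zariski tangent space of $\Gr_A(s,M)$ at $U$ is $\Hom_A(U,M/U)=0$.

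To conclude that $U$ is isolated, I avoid dimension theory over nonclosed fields and argue directly in the local ring. Let $\mathfrak m$ be the ideal of $X=0$ in the coordinate ring $R$ of the chart modulo the equations. Vanishing of the linear parts gives $\mathfrak m=\mathfrak m^2+(\text{equations})$, so $\mathfrak m R_{\mathfrak m}=\mathfrak m^2R_{\mathfrak m}$. Nakayama's lemma then gives $\mathfrak mR_{\mathfrak m}=0$, so there is $g\notin\mathfrak m$ with $g\mathfrak m=0$ in $R$. On the open set $g\neq0$, every coordinate $x_{ij}$ vanishes, so this open set consists of the single point $U$. This proves (1) without needing $\mathcal O$ at all, although Lemma~\ref{lem:hom-open} remains available as a backup.

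For (2), each $\Gr_A(s,M)$ is Noetherian, so it has finitely many irreducible components. Each isolated point $\{U\}$ is open, and its closure is itself, so $\{U\}$ is an irreducible component. Hence each $\Gr_A(s,M)$ contains only finitely many solid submodules, and summing over $0\le s\le d$ shows that $\Sol_A(M)$ is finite.

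For (3), Proposition~\ref{prop:torsion-parts} shows that the image of $\T\mapsto t_\T(M)$ is exactly $\Sol_A(M)$, and that each fiber is the interval $[\Tgen(U),{}^\perp(M/U)]$. Each such interval is nonempty, since it contains $\Tgen(U)$ by condition (3) of that proposition. The fibers therefore partition $\tors A$ into $|\Sol_A(M)|$ intervals, which is a finite number by (2).
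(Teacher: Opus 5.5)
Your proof is correct. Parts (2) and (3) follow the paper essentially verbatim; the difference lies in (1).

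First, your aside that $\mathcal O=\{U\}$ ``need not hold'' is mistaken. If $V\in\mathcal O$, the composite $U\hookrightarrow M\twoheadrightarrow M/V$ is an $A$-linear map $U\to M/V$, hence zero. So $U\subseteq V$, and $U=V$ because both have dimension $s$. This one-line observation is precisely the paper's proof of (1): Lemma~\ref{lem:hom-open} makes $\Omega_U=\{V\mid\Hom_A(U,M/V)=0\}$ open, and the argument just given shows $\Omega_U=\{U\}$. Your final argument bypasses $\mathcal O$ entirely, so the error does not affect validity. It did, however, push you toward a heavier route than necessary.

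Your route is the alternative the paper records in the Remark following the theorem. You linearize \eqref{eq:graph-invariance} at $X=0$ to identify the tangent space with $\Hom_A(U,M/U)=0$, deduce $\mathfrak m=\mathfrak m^2$ in the chart ring, and use Nakayama to find $g\notin\mathfrak m$ with $g\mathfrak m=0$. Compared with the paper's main proof, this avoids Lemma~\ref{lem:hom-open} but needs the explicit chart equations and some commutative algebra. In exchange it yields more: $U$ is a reduced isolated point of the scheme cut out by the submodule equations. Your passage from $\mathfrak m R_{\mathfrak m}=0$ to isolation among $k$-rational points, via the open set $\{g\neq0\}$, is correct. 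It is also a more explicit version of the step the Remark leaves implicit.

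One small wording point. What you actually use is that the linear parts of the equations have zero common kernel, so they span all linear forms in the $x_{ij}$; this gives $\mathfrak m\subseteq\mathfrak m^2+I$. The phrase ``vanishing of the linear parts'' says the opposite.
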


\begin{proof}
(1) Fix $U\in\Sol_A(M)$ of dimension $s$. By Lemma~\ref{lem:hom-open},
\[
 \Omega_U=\{V\in\Gr_A(s,M)\mid\Hom_A(U,M/V)=0\}
\]
is open and contains $U$ because $\Hom_A(U,M/U)=0$. For every
$V\in\Omega_U$,
the composite $U\hookrightarrow M\twoheadrightarrow M/V$ is zero.
Hence $U\subseteq V$, and equality follows from $\dim U=\dim V=s$.
We have proved $\Omega_U=\{U\}$, so $U$ is isolated in
$\Gr_A(s,M)$.

(2) The singleton $\{U\}$ is open by (1) and closed because $U$ is a
$k$-rational point. Any irreducible component containing $U$ must equal
$\{U\}$: otherwise $\{U\}$ and its complement would partition that
component into two nonempty closed subsets. There are only finitely many
irreducible components of a Noetherian space, and only finitely many possible
values of $s$. This proves finiteness of $\Sol_A(M)$.

(3) Proposition~\ref{prop:torsion-parts} gives the interval partition,
which is finite by (2).
\end{proof}

\begin{remark}
Tangent spaces give another proof of isolation. Consider the closed
subscheme of the Grassmannian over $k$ defined by the submodule equations;
its $k$-rational points form $\Gr_A(s,M)$. A first-order deformation of a
submodule $U$ is represented by a linear map $U\to M/U$; linearizing
the invariance equations says precisely that this map is $A$-linear.
Thus its Zariski tangent space at $U$ is $\Hom_A(U,M/U)$.
If $U$ is solid, this tangent space is zero. In the local ring at $U$,
the maximal ideal $\mathfrak m$ therefore satisfies
$\mathfrak m/\mathfrak m^2=0$. Nakayama's lemma gives $\mathfrak m=0$,
so $U$ is an isolated reduced point.
\end{remark}

\subsection{Finiteness of brick chain filtrations}

We next show that every term of a brick chain filtration is solid.
Finiteness of the possible terms will then imply finiteness of the
filtrations.

For a class $\mathcal C$ of modules, let $\Filt\mathcal C$ consist of
the modules admitting a finite filtration with factors isomorphic to
members of $\mathcal C$, including the zero module via the empty
filtration. We write $\Filt B$ when $\mathcal C=\{B\}$.

\begin{definition}[{\cite[Section~1.1]{RingelFiltrations}}]\label{def:bcf}
A \emph{brick chain} is a sequence of bricks $(B_1,\ldots,B_t)$ with
\[
 \Hom_A(B_i,B_j)=0\qquad(i<j).
\]
A \emph{brick chain filtration} of $M$
is a chain
\[
 0=M_0\subsetneq M_1\subsetneq\cdots\subsetneq M_t=M
\]
for which there is a brick chain $(B_1,\ldots,B_t)$ satisfying
\[
 M_i/M_{i-1}\in\Filt B_i\qquad(1\leq i\leq t).
\]
\end{definition}

For $M=0$, we use the additional convention that $t=0$ and that the
empty chain is a brick chain filtration. Following Ringel, the ordered
sequence $([B_1],\ldots,[B_t])$ is the \emph{type} of a filtration.
If a nonzero module belongs to both $\Filt B$ and $\Filt C$, then $B\simeq C$
\cite[Lemma~9.7]{RingelFiltrations}. Hence the type is determined by the
chain of submodules.

We also record the induced quotient filtration, which will be used to
count filtrations recursively in Theorem~\ref{thm:uniform}.

\begin{lemma}\label{lem:filtration-solid}
Every term of a brick chain filtration of $M$ belongs to $\Sol_A(M)$.
If its first nonzero term is $U$, the induced filtration of $M/U$ is a
brick chain filtration.
\end{lemma}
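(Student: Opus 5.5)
To show that each term $M_i$ is solid, we need $\Hom_A(M_i,M/M_i)=0$. We filter both sides and use the Hom-vanishing of the brick chain.

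The submodule $M_i$ is filtered by the factors $M_1/M_0,\ldots,M_i/M_{i-1}$, with $M_j/M_{j-1}\in\Filt B_j$ for $j\le i$. Hence $M_i\in\Filt\{B_1,\ldots,B_i\}$. Likewise $M/M_i$ is filtered by the factors $M_l/M_{l-1}$ for $l>i$, so $M/M_i\in\Filt\{B_{i+1},\ldots,B_t\}$.

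Since $\Hom_A(-,-)$ is left exact in each variable, the class of pairs $(X,Y)$ with $\Hom_A(X,Y)=0$ is closed under extensions in either argument. Concretely, given an exact sequence $0\to X'\to X\to X''\to0$ with $\Hom_A(X',Y)=\Hom_A(X'',Y)=0$, we get $\Hom_A(X,Y)=0$. The dual statement holds for extensions in the second variable. By induction on filtration length, $\Hom_A(B_j,B_l)=0$ for all $j\le i<l$ therefore yields $\Hom_A(M_i,M/M_i)=0$, which is exactly $M_i\in\Sol_A(M)$. The terms $M_0=0$ and $M_t=M$ are trivially solid.

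For the quotient statement, let $U=M_1$ and consider the chain $0=M_1/U\subsetneq M_2/U\subsetneq\cdots\subsetneq M_t/U=M/U$. By the third isomorphism theorem, $(M_i/U)/(M_{i-1}/U)\simeq M_i/M_{i-1}\in\Filt B_i$ for $2\le i\le t$. The inclusions remain strict, and $(B_2,\ldots,B_t)$ is still a brick chain because it is a subsequence of one. When $t=1$ we get $M/U=0$ with the empty chain, which is covered by the stated convention.

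No step here is a real obstacle. The only point requiring care is the double induction that passes Hom-vanishing through the filtrations in both arguments.
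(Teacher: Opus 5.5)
Your proposal is correct and follows essentially the same argument as the paper: you place $M_i$ in $\Filt\{B_1,\ldots,B_i\}$ and $M/M_i$ in $\Filt\{B_{i+1},\ldots,B_t\}$, pass Hom-vanishing through extensions in both variables, and check that the induced quotient chain has factors in $\Filt B_2,\ldots,\Filt B_t$. Your explicit treatment of the case $t=1$ through the empty-chain convention is a small detail that the paper leaves implicit.
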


\begin{proof}
Let $(B_1,\ldots,B_t)$ be the type of the filtration. We have
$M_s\in\Filt\{B_1,\ldots,B_s\}$ and
$M/M_s\in\Filt\{B_{s+1},\ldots,B_t\}$.
Since Hom-vanishing is preserved under extensions in either variable,
the brick chain condition gives
$\Hom_A(M_s,M/M_s)=0$.
If $U=M_1$, the induced quotient filtration is
\[
 0=M_1/U\subsetneq M_2/U\subsetneq\cdots\subsetneq M_t/U=M/U.
\]
Its successive factors belong to
$\Filt B_2,\ldots,\Filt B_t$, proving the last assertion.
\end{proof}

\begin{corollary}\label{cor:bcf-finite}
Every module has finitely many brick chain filtrations. Only finitely
many isomorphism classes of bricks occur in their types.
\end{corollary}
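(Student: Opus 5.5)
The plan is to combine Lemma~\ref{lem:filtration-solid} with Theorem~\ref{thm:solid-finite}\eqref{item:solid-finite}. A brick chain filtration of $M$ is a strictly increasing chain
\[
 0=M_0\subsetneq M_1\subsetneq\cdots\subsetneq M_t=M .
\]
By Lemma~\ref{lem:filtration-solid}, each term $M_i$ lies in $\Sol_A(M)$. Strict inclusion forces $\dim_k M_i$ to increase strictly, so $t\leq\dim_k M$. Hence the chain is a subset of the finite set $\Sol_A(M)$, and it is determined by that subset because the subset is totally ordered by inclusion. The number of brick chain filtrations is therefore at most $2^{|\Sol_A(M)|}$, which is finite. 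No recursion is needed at this stage; the recursive quotient statement of Lemma~\ref{lem:filtration-solid} is reserved for the quantitative bound later.

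For the second assertion, recall that the type $([B_1],\ldots,[B_t])$ of a filtration is determined by the chain. The reason is \cite[Lemma~9.7]{RingelFiltrations}: each nonzero factor $M_i/M_{i-1}$ lies in $\Filt B$ for only one isomorphism class $[B]$. So each of the finitely many filtrations contributes at most $t\leq\dim_k M$ brick classes. The union of the types is therefore a finite set of isomorphism classes of bricks.

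There is no real obstacle, since all the substance lies in Theorem~\ref{thm:solid-finite}. The only points to check are that distinct filtrations give distinct subsets of $\Sol_A(M)$, which holds because they are chains of submodules, and the convention for $M=0$, where the empty chain is the unique filtration.
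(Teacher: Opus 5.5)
Your proposal is correct and follows essentially the same argument as the paper. Lemma~\ref{lem:filtration-solid} places every filtration term in the finite set $\Sol_A(M)$, so there are only finitely many strict chains. Ringel's Lemma~9.7 then shows that each nonzero factor determines its brick up to isomorphism, which gives the second assertion.
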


\begin{proof}
By Lemma~\ref{lem:filtration-solid}, each filtration is a strict chain
in $\Sol_A(M)$, which is finite by
Theorem~\ref{thm:solid-finite}\ref{item:solid-finite}. There are only finitely many such
chains, which proves the first assertion. Each chain has finitely many
factors, and each nonzero factor belonging to some $\Filt B$ determines
$B$ up to isomorphism by \cite[Lemma~9.7]{RingelFiltrations}.
This proves the second assertion.
\end{proof}

This answers Question~\ref{ques:ringel}(1) negatively.

We compute the semisimple case to exhibit modules attaining the
factorial count in Question~\ref{ques:ringel}(2).

\begin{example}\label{ex:semisimple}
Let $S_1,\ldots,S_r$ be pairwise nonisomorphic simple modules and let
$M=\bigoplus_{i=1}^r S_i^{a_i}$, with all $a_i>0$.
A submodule $U$ has the form $\bigoplus_i U_i$ with
$U_i\subseteq S_i^{a_i}$ and $U_i\simeq S_i^{b_i}$.
If $0<b_i<a_i$, then both $U_i$ and $S_i^{a_i}/U_i$ contain $S_i$,
so $\Hom_A(U,M/U)\ne0$. If every $b_i$ is either $0$ or $a_i$,
the isomorphism classes of simple summands in $U$ and $M/U$ are
disjoint, so
$\Hom_A(U,M/U)=0$. Thus the solid submodules are exactly the sums
of complete isotypic summands.

A factor of the semisimple module $M$ that belongs to $\Filt B$ has $B$
as a subquotient. Hence $B$ is
semisimple; a semisimple brick is simple, since a decomposition into two
nonzero summands would give a nontrivial idempotent endomorphism.
The factor is therefore
isotypic. Every term in a brick chain filtration is solid, so it is a
sum of complete isotypic summands. It follows that each step adds
exactly one such summand. Each complete isotypic summand is added at
exactly one step, and pairwise nonisomorphic simple modules have no
nonzero morphisms between them. Hence every
ordering of the $r$ isotypic summands gives a brick chain filtration,
and these are all the filtrations. Their number is $r!$.
In particular, if every $a_i=1$, then $\ell(M)=r$ and the count is
$\ell(M)!$.
\end{example}

\section{A uniform bound for brick chain filtrations}\label{sec:upper}

We prove that a module of dimension $d$ has at most
$2^{d^2}$ brick chain filtrations, independently of the algebra.
We first bound the number of possible filtration terms and then count
filtrations by induction on dimension. For split algebras, Morita
equivalence gives the bound $2^{m^2}$ in composition length $m$.

We use the following bound for isolated common zeros of polynomial
equations. It is a special case of \cite[Theorem~5]{TaoBezout};
see also \cite[Theorem~1]{Heintz} for the general affine B\'ezout
inequality.

\begin{lemma}\label{lem:bezout}
Let $\kfield$ be algebraically closed, and let
$Z\subseteq\mathbb A^N_{\kfield}$ be the common zero set of finitely
many polynomials of degree at most $e$, where $e\geq1$. Then $Z$ has
at most $e^N$ isolated points.
\end{lemma}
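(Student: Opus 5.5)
The plan is to deduce Lemma~\ref{lem:bezout} from the refined (Heintz-type) affine B\'ezout inequality rather than reprove it from scratch. For a constructible or closed $Z\subseteq\mathbb A^N_k$, let $\deg Z$ be the sum of the degrees of the closures of its irreducible components. The inequality we rely on is: if $Z=V(f_1,\ldots,f_r)$ with $\deg f_j\le e$, then $\deg Z\le e^N$. Granting this, isolated points of $Z$ are exactly its zero-dimensional irreducible components. Each such point contributes degree $1$ to $\deg Z$, so their number is at most $\deg Z\le e^N$.

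To justify the degree inequality, we proceed in three steps.

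\emph{Reduction to at most $N$ equations.} Since $k$ is algebraically closed, hence infinite, we replace $f_1,\ldots,f_r$ by $N$ generic linear combinations $g_i=\sum_j\lambda_{ij}f_j$. We then multiply or homogenize so that each $g_i$ has degree exactly at most $e$. The set $V(g_1,\ldots,g_N)$ contains $Z$. For generic coefficients, every irreducible component of $Z$ of dimension $c$ is still a component of $V(g_1,\ldots,g_N)$; this is a standard Kronecker-style argument, cutting down one generic combination at a time and using the prime-avoidance principle over an infinite field. In particular, isolated points of $Z$ remain isolated points of $V(g_1,\ldots,g_N)$.

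\emph{Intersecting hypersurfaces one at a time.} We apply the intersection inequality $\deg(X\cap H)\le\deg X\cdot\deg H$, valid for any closed $X$ and any hypersurface $H$. Iterating it gives $\deg V(g_1,\ldots,g_N)\le e^N$. This intersection inequality is proved componentwise by passing to the projective closure and using the classical projective B\'ezout theorem, where components of the intersection are counted without multiplicity and dropped components only help.

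We expect the main obstacle to be this reduction, specifically making sure that isolated points survive and that excess components of higher dimension do not interfere. The degree-based inequality handles this automatically, because it counts components of all dimensions and bounds the total. For this reason the reduction only needs to preserve the isolated points as components, not the whole of $Z$.

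In the paper we would simply cite \cite[Theorem~1]{Heintz} for $\deg Z\le e^N$ and \cite[Theorem~5]{TaoBezout} for the isolated-point formulation, and conclude as in the first paragraph.
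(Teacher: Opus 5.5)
Your argument is correct, but it takes a different and more self-contained route than the paper. The paper's proof is pure bookkeeping around one citation. Tao's Theorem~5 already bounds the number of isolated points of $V(P_1,\ldots,P_m)$ by the product of the $N$ largest degrees when $m\geq N$ and all $P_j$ are nonconstant. So the paper only discards zero polynomials, repeats one polynomial to reach $N$ of them, and treats the degenerate cases separately: a nonzero constant gives $Z=\emptyset$, and an empty list gives $Z=\mathbb A^N$. You instead prove the stronger bound $\deg Z\leq e^N$, summed over components of all dimensions, essentially reconstructing what lies inside such a theorem. You pick $g_i$ in the $k$-span of the $f_j$ so that it does not vanish identically on any component of $V(g_1,\ldots,g_{i-1})$ not contained in $Z$; this is possible because a vector space over an infinite field is not a finite union of proper subspaces. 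Then after $i$ steps the components not contained in $Z$ have dimension at most $N-i$. After $N$ steps they are points outside $Z$, which forces every isolated point of $Z$ to be isolated in $V(g_1,\ldots,g_N)$. Finally you iterate $\deg(X\cap H)\leq\deg X\cdot\deg H$, which comes from projective B\'ezout for a variety and a hypersurface. This dimension count is the entire content of your reduction step, so write it out rather than calling it standard. Drop the phrase about homogenizing: the $g_i$ already have degree at most $e$, and homogenizing would change the affine zero set. Also note that $V(0)=\mathbb A^N$ has degree $1\leq e$, which is where the hypothesis $e\geq1$ enters. One citation correction: Heintz's Theorem~1 is the inequality $\deg(V\cap W)\leq\deg V\cdot\deg W$, which by itself yields only $e^r$ for $r$ equations. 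The exponent $N$ genuinely requires your reduction, so that citation alone does not give $\deg Z\leq e^N$. Your route buys independence from the exact hypotheses of Tao's formulation and a bound on all components at once; the paper's route buys a two-line proof.
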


\begin{proof}
Theorem~5 of \cite{TaoBezout} bounds the number of isolated points by
the product of the $N$ largest degrees for a list of at least $N$
nonconstant polynomials. After discarding zero polynomials, we can
repeat one polynomial to meet this requirement; each degree is at most
$e$. A nonzero constant equation gives the empty set, and an empty list
gives $\mathbb A^N_k$, so these cases also satisfy the bound.
\end{proof}

In \eqref{eq:graph-invariance}, the number of equations may depend
on $A$, but their degree and the number of chart coordinates depend
only on $\dim_\kfield M$. This gives a bound independent of the algebra.

\begin{proposition}\label{prop:solid-bound}
Let $d=\dim_{\kfield}M$. For $0\leq s\leq d$, one has
\[
 \#\{U\in\Sol_A(M)\mid\dim_{\kfield}U=s\}
 \leq \binom ds 2^{s(d-s)}.
\]
In particular,
\[
 \#\Sol_A(M)\leq\sum_{s=0}^d\binom ds 2^{s(d-s)}.
\]
\end{proposition}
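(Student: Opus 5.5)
The plan is to cover $\Gr(s,M)$ by the $\binom ds$ coordinate affine charts $V_X$, $X\in\Hom_k(P,Q)\cong\mathbb A^{s(d-s)}$. We then show that each chart contains at most $2^{s(d-s)}$ solid submodules of dimension $s$; summing over charts gives the first inequality, and summing over $s$ gives the second.

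\emph{Reduction to an algebraically closed field.} Let $\bar k$ be an algebraic closure. Put $\bar A=A\otimes_k\bar k$ and $\bar M=M\otimes_k\bar k$. A solid $U\subseteq M$ extends to $\bar U=U\otimes_k\bar k\subseteq\bar M$, and
\[
 \Hom_{\bar A}(\bar U,\bar M/\bar U)\cong\Hom_A(U,M/U)\otimes_k\bar k=0,
\]
so $\bar U$ is solid in $\bar M$. Distinct $U$ give distinct $\bar U$, since $\bar U\cap M=U$. The chart containing $U$ (the one whose coordinate plane $P$ projects isomorphically) is also a chart containing $\bar U$, with the same coordinate point $X$. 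Hence it suffices to bound the number of solid points of $\Gr_{\bar A}(s,\bar M)$ in a single chart. The invariance equations \eqref{eq:graph-invariance} are the same equations, now read over $\bar k$.

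\emph{Counting within a chart.} Over $\bar k$, fix a chart. Its submodule locus is the common zero set $Z\subseteq\mathbb A^{s(d-s)}$ of the entries of \eqref{eq:graph-invariance}, taken over a basis of $A$. These are polynomials of degree at most $2$. By Theorem~\ref{thm:solid-finite}\ref{item:solid-isolated}, applied to $\bar A$ and $\bar M$, each solid submodule is isolated in $\Gr_{\bar A}(s,\bar M)$. It is therefore isolated in the open subset $Z$. Lemma~\ref{lem:bezout} with $e=2$ and $N=s(d-s)$ gives at most $2^{s(d-s)}$ such points.

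\emph{Degenerate cases.} If $s(d-s)=0$, then $s=0$ or $s=d$. In that case the chart is a single point and the bound $1$ is trivial, so the main content is the Bézout step. The only delicate point is the base change, in particular checking that Theorem~\ref{thm:solid-finite} is applied over $\bar k$. This is legitimate because the theorem holds for any field.
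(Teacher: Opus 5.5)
Your proposal is correct and follows essentially the same route as the paper. The paper also extends scalars to an algebraic closure, covers $\Gr(s,M)$ by $\binom ds$ charts whose equations have degree at most two, and uses Theorem~\ref{thm:solid-finite}\ref{item:solid-isolated} for isolation, then applies Lemma~\ref{lem:bezout} chart by chart and sums over the charts.
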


\begin{proof}
Let $K$ be an algebraic closure of $k$, and write $A_K=A\otimes_k K$
and $X_K=X\otimes_k K$ for an $A$-module $X$. Scalar extension gives
\[
 \Hom_{A_K}(U_K,(M/U)_K)
   \simeq \Hom_A(U,M/U)\otimes_k K.
\]
Indeed, each Hom space is the kernel of the same finite system of
linear equations, before and after extending scalars. Thus $U_K$ is
solid in $M_K$ whenever $U$ is solid in $M$. Distinct submodules remain
distinct after scalar extension, and $\dim_K U_K=\dim_k U$.
It therefore suffices to prove the bound over the algebraically closed
field $K$.

Fix a basis of $M_K$. The Grassmannian $\Gr(s,M_K)$ has a cover by
$\binom ds$ affine charts, each with $s(d-s)$ coordinates.
On each chart, \eqref{eq:graph-invariance} gives equations of degree
at most two. Theorem~\ref{thm:solid-finite}\ref{item:solid-isolated} shows that every solid
submodule is isolated in $\Gr_{A_K}(s,M_K)$, and hence in each chart
containing it.
Lemma~\ref{lem:bezout} bounds their number
on each chart by $2^{s(d-s)}$. Summing over the charts can only
overcount points, so it gives the required upper bound.
\end{proof}

Counting filtrations by their first nonzero term gives the following bound.

\begin{theorem}\label{thm:uniform}
Let $A$ be a finite-dimensional algebra over a field $k$, and let $M$
be an $A$-module of dimension $d=\dim_k M$. Then
\[
 \bcount_A(M)\leq 2^{d^2}.
\]
\end{theorem}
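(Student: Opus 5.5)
We argue by strong induction on $d=\dim_k M$, counting filtrations according to their first nonzero term. For $d=0$ the only filtration is the empty chain, so $\bcount_A(0)=1=2^0$. For $d\geq1$, a brick chain filtration of $M$ has $t\geq1$ and a first nonzero term $U=M_1$. By Lemma~\ref{lem:filtration-solid}, $U\in\Sol_A(M)$ and the induced chain $(M_i/U)_{i\geq1}$ is a brick chain filtration of $M/U$ (the empty chain when $U=M$). The filtration is recovered from $U$ together with this quotient filtration, by taking preimages. Hence
\[
 \bcount_A(M)\leq\sum_{s=1}^{d}\ \sum_{\substack{U\in\Sol_A(M)\\ \dim U=s}}\bcount_A(M/U).
\]

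Since $\dim M/U=d-s<d$, the induction hypothesis gives $\bcount_A(M/U)\leq 2^{(d-s)^2}$. Proposition~\ref{prop:solid-bound} bounds the number of solid $U$ of dimension $s$ by $\binom ds 2^{s(d-s)}$. Therefore
\[
 \bcount_A(M)\leq\sum_{s=1}^{d}\binom ds 2^{s(d-s)+(d-s)^2}
 =\sum_{s=1}^{d}\binom ds 2^{d(d-s)}.
\]

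It remains to show that this sum is at most $2^{d^2}$. Factor out $2^{d^2}$ to get
\[
 2^{d^2}\sum_{s=1}^d\binom ds 2^{-ds}=2^{d^2}\bigl((1+2^{-d})^d-1\bigr).
\]
We need $(1+2^{-d})^d\leq 2$. Since $1+x\leq e^x$, we have $(1+2^{-d})^d\leq e^{d2^{-d}}$. Because $d2^{-d}\leq 1/2$ for all $d\geq1$, this gives $(1+2^{-d})^d\leq e^{1/2}<2$, which closes the induction.

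The main work is already done in Proposition~\ref{prop:solid-bound}. The only points needing care here are the exponent bookkeeping and the fact that $U=M$ is allowed: it contributes $\bcount_A(0)=1$, which is the $s=d$ term of the sum.
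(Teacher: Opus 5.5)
Your proof is correct and follows the paper's argument: induction on dimension by the first nonzero term, using Lemma~\ref{lem:filtration-solid} and Proposition~\ref{prop:solid-bound} to reach the same sum $\sum_{s=1}^d\binom ds 2^{d(d-s)}$. The only difference is the final estimate, where you use the binomial theorem and $1+x\leq e^x$, while the paper uses $\binom ds\leq d^s$ and $d\leq 2^{d-1}$ to compare with a geometric series.
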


\begin{proof}
We argue by induction on $d$. For $d=0$ the bound is our
convention $\bcount_A(0)=1$.
For $d\geq1$, a filtration is determined by its first nonzero term $U$
and the induced filtration of $M/U$. By
Lemma~\ref{lem:filtration-solid} and Proposition~\ref{prop:solid-bound},
the contribution of terms $U$ of dimension $s$ is at most
$\binom ds2^{s(d-s)}2^{(d-s)^2}$. We use
$\binom ds\leq d^s$ and $d\leq2^{d-1}$ for $d\geq1$; the latter
follows by induction from $d+1\leq2d$. Hence
\begin{align*}
 \bcount_A(M)
 &\leq\sum_{s=1}^d\binom ds 2^{s(d-s)}2^{(d-s)^2}\\
 &=2^{d^2}\sum_{s=1}^d\binom ds 2^{-ds}\\
 &\leq2^{d^2}\sum_{s=1}^d\left(\frac d{2^d}\right)^s
 \leq2^{d^2}\sum_{s=1}^d2^{-s}<2^{d^2}.
\end{align*}
Here the equality uses
$s(d-s)+(d-s)^2=d(d-s)=d^2-ds$.
\end{proof}

For split algebras we can express the bound in terms of composition
length. Recall that $A$ is \emph{split} if $\End_A(S)\simeq k$ as $k$-algebras
for every simple $A$-module $S$. Every finite-dimensional algebra over an
algebraically closed field is split. A basic split algebra has one-dimensional
simple modules, so dimension and composition length agree for its modules.

\begin{corollary}\label{cor:uniform-length}
Let $A$ be a split finite-dimensional algebra over a field $k$, and let
$M$ be an $A$-module of composition length $m$. Then
\[
 \bcount_A(M)\leq 2^{m^2}.
\]
\end{corollary}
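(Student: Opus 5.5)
The plan is to reduce to a basic split algebra by Morita equivalence and then apply Theorem~\ref{thm:uniform}. Let $P$ be a basic progenerator of $\modu A$ with $P=\bigoplus_{i}P_i$, one indecomposable projective $P_i$ for each simple $S_i$. Put $A'=\End_A(P)$, so that $A'$ is basic. The functor $F=\Hom_A(P,-)\colon\modu A\to\modu A'$ is an equivalence. Since $\End_{A'}(F S_i)\simeq\End_A(S_i)\simeq k$, the algebra $A'$ is again split. Hence every simple $A'$-module $F S_i=\Hom_A(P,S_i)$ has dimension $\dim_k\End_A(S_i)=1$, so $A'$ is basic and split with one-dimensional simples. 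As the paper notes, composition length and dimension then agree for every $A'$-module. In particular $\dim_k F M=\ell(F M)=\ell(M)=m$, because equivalences preserve composition length.

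Next I will check that $F$ induces a bijection between brick chain filtrations of $M$ and of $F M$, so that $\bcount_A(M)=\bcount_{A'}(F M)$. Exactness of $F$ gives an order isomorphism between the submodule lattices of $M$ and $F M$, sending a chain $(M_i)$ to the chain $(F M_i)$. Because $F$ is exact, it identifies $F M_i/F M_{i-1}$ with $F(M_i/M_{i-1})$. It also preserves the remaining ingredients:
\begin{itemize}
\item bricks, since endomorphism rings are preserved;
\item the relation $\Hom(B_i,B_j)=0$, since $F$ is fully faithful;
\item the classes $\Filt B$, since $F$ is an exact equivalence, so filtrations with factors isomorphic to $B$ correspond to filtrations with factors isomorphic to $F B$.
\end{itemize}
Applying the same argument to a quasi-inverse gives the reverse direction. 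Therefore chains forming brick chain filtrations correspond bijectively.

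Finally, Theorem~\ref{thm:uniform} applied to $F M$ over $A'$ gives $\bcount_A(M)=\bcount_{A'}(F M)\leq 2^{(\dim_k F M)^2}=2^{m^2}$.

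The only delicate step is the claim that $\dim_k\Hom_A(P,S_i)=1$. This requires that $P$ contain exactly one copy of each $P_i$, and that $\Hom_A(P_j,S_i)=0$ for $j\neq i$ while $\Hom_A(P_i,S_i)\simeq\End_A(S_i)\simeq k$. Both facts are standard, and the rest of the argument is routine transport of structure.
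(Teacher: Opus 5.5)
Your proposal is correct and matches the paper's proof: reduce by a Morita equivalence to a basic algebra $A'$, check that $A'$ is split so that $\dim_k F(M)=\ell(F(M))=m$, note that the equivalence transports brick chain filtrations bijectively, and apply Theorem~\ref{thm:uniform}. Your direct computation $\dim_k\Hom_A(P,S_i)=1$ is simply an explicit form of the paper's remark that a basic split algebra has one-dimensional simple modules.
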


\begin{proof}
Choose a Morita equivalence $F:\modu A\to\modu A'$ with $A'$ basic. The exact
functor $F$ identifies subobject lattices and Hom spaces, sends bricks to
bricks, and satisfies $F(\Filt B)=\Filt(F(B))$. It therefore preserves the
number of brick chain filtrations and composition length. It also
identifies the endomorphism algebras of corresponding simple modules,
so $A'$ is split. Since $A'$ was chosen basic,
$\dim_k F(M)=\ell(F(M))=m$, and
Theorem~\ref{thm:uniform} applied to $F(M)$ gives the asserted bound.
\end{proof}

\section{Bounds for brick chain filtrations over \texorpdfstring{$\tau$}{tau}-tilting finite algebras}\label{sec:finite-type}

For $\tau$-tilting finite algebras, we bound the number of brick chain
filtrations by a multinomial coefficient in the simple composition
multiplicities, and hence by $\ell(M)!$. The proof uses the submodule
polytope: filtrations give increasing edge paths, which can be counted
by coordinate lattice paths. We then characterize the modules satisfying
$\bcount_A(M)=\ell(M)!$.
Recall that $\tors A$ is finite for every $\tau$-tilting finite algebra
$A$ \cite[Theorem~3.8]{DIJ}.

\subsection{Submodule polytopes and filtration steps}

The submodule polytope records the composition multiplicities of
submodules. We use the description of its vertices and edges in
\cite[Section~5.3]{AHIKM} to interpret the successive terms of a
brick chain filtration.

Choose representatives $S_1,\ldots,S_r$ of the simple $A$-modules.
For a module $X$, let $[X]\in\mathbb Z_{\geq0}^r$ be its vector of
simple composition multiplicities. Define
\[
 P(M)=\operatorname{conv}\{[U]\mid U\subseteq M\}
       \subseteq\mathbb R^r.
\]
This is an integral polytope, since there are only finitely many
possible vectors $[U]$. Baumann, Kamnitzer and Tingley
\cite[Section~1.3]{BKT} and Fei \cite{FeiComb} study the corresponding
polytope of dimension vectors of submodules. Put $\mathbf d=[M]=(d_1,\ldots,d_r)$. Then $P(M)$ lies in the
coordinate box $[0,\mathbf d]=\prod_i[0,d_i]$. The sum of the
coordinates has its unique minimum at $0$ and unique maximum at
$\mathbf d$, so both are vertices. We call an edge from $u$ to $v$
\emph{increasing} if $u\leq v$ coordinatewise and $u\ne v$.
An increasing edge path follows such edges from $0$ to $\mathbf d$.

A \emph{cover} $\T'\lessdot\T$ in $\tors A$ is a strict inclusion
with no torsion class strictly between its endpoints. The following
result is the part of the polytope description that we need.

\begin{theorem}[{\cite[Theorem~5.23(b),(c)]{AHIKM}}]\label{thm:polytope-torsion}
Let $A$ be $\tau$-tilting finite and let $M\in\modu A$.
\begin{enumerate}
\item\label{item:polytope-vertices}
The vertices of $P(M)$ are precisely the vectors $[t_\T(M)]$ for
$\T\in\tors A$.
\item\label{item:polytope-edges}
For every cover $\T'\lessdot\T$, either $t_{\T'}(M)=t_\T(M)$ or
$[t_{\T'}(M)],[t_\T(M)]$ are the endpoints of an increasing edge.
Every edge of $P(M)$ is obtained from such a cover.
\end{enumerate}
\end{theorem}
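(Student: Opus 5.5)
The plan is to realize $P(M)$ as an upper envelope of linear functionals, and to read off its vertices and edges from the chamber structure of stability conditions. For $\theta\in\mathbb R^r$, write $\theta(X)=\langle\theta,[X]\rangle$. Following King and Baumann--Kamnitzer--Tingley, consider the torsion classes
\[
 \overline{\T}_\theta=\{X\mid\theta(X')\geq0\text{ for every factor module }X'\text{ of }X\}
\]
and the analogous $\T_\theta$ with strict inequality for nonzero $X'$. The first step is the standard maximization lemma. The submodules $U\subseteq M$ maximizing $\theta(U)$ are exactly those with $t_{\T_\theta}(M)\subseteq U\subseteq t_{\overline{\T}_\theta}(M)$ and $U/t_{\T_\theta}(M)$ being $\theta$-semistable of $\theta$-value zero. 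I would prove this by comparing $\theta(U)$ with $\theta(U+t_{\T_\theta}(M))$ and $\theta(U\cap t_{\overline{\T}_\theta}(M))$, using the definition of the two classes on the relevant subquotients. In particular, the face of $P(M)$ on which $\theta$ is maximized is the convex hull of these vectors.

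Next I would use $\tau$-tilting finiteness. By the theory of $g$-vector fans (Asai, and Brüstle--Smith--Treffinger), the fan is complete and finite. Every $\T\in\tors A$ equals $\T_\theta=\overline{\T}_\theta$ for $\theta$ in the interior of its chamber. Moreover, a cover $\T'\lessdot\T$ corresponds to two chambers sharing a wall $\{\theta(B)=0\}$, where $B$ is the brick labelling the cover. For such $\theta$ on the relative interior of the wall, $\T_\theta=\T'$ and $\overline{\T}_\theta=\T$ (with the appropriate sign convention), and the $\theta$-semistables of value zero are $\Filt B$.

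With these inputs the proof of (1) is short. For $\theta$ in a chamber interior, the maximizer is unique, so $[t_\T(M)]$ is a vertex. Conversely, the set of $\theta$ exposing a given vertex is open, hence it meets the interior of some chamber, because the fan is complete with finitely many cones. Part (2) follows the same pattern. For $\theta$ on the wall of a cover, the maximizing submodules $U$ satisfy $U/t_{\T'}(M)\in\Filt B$, so all their vectors lie on the line $[t_{\T'}(M)]+\mathbb R[B]$. The exposed face is therefore the segment between $[t_{\T'}(M)]$ and $[t_\T(M)]$. This segment is a point if the torsion parts coincide; otherwise it is an edge, and it is increasing since $t_{\T'}(M)\subseteq t_\T(M)$. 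Finally, every edge is exposed by a relatively open set of $\theta$ of codimension at most one. By completeness and finiteness of the fan, such a set meets the relative interior of some wall, and the previous argument identifies the edge with one coming from a cover.

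I expect the main obstacle to be the wall analysis: showing that for $\theta$ generic on the wall, $\T_\theta$ and $\overline{\T}_\theta$ are exactly the two endpoints of the cover, and that the zero-value semistables are precisely $\Filt B$. I would first try to import this from the brick-labelling and wall-and-chamber results (Asai; Brüstle--Smith--Treffinger). These say that walls are the stability spaces of bricks and that generic points of the wall of $B$ have $B$ as their only stable object.
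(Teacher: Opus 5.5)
The paper does not prove this statement. It is quoted from \cite[Theorem~5.23(b),(c)]{AHIKM} and used only as a black box in Lemma~\ref{lem:evaluate-chain} and Proposition~\ref{prop:filtration-edges}, so there is no in-paper argument to compare with step by step.

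Your argument is a correct reconstruction along the stability route. It makes explicit what the citation hides: the $g$-vector fan refines the normal fan of $P(M)$, with chambers exposing the vertices $[t_\T(M)]$ and walls exposing the segments attached to covers. Your maximization lemma holds as stated with your sign convention. It rests on $\T_\theta^\perp=\{Y\mid\theta(Y')\leq0\text{ for all }Y'\subseteq Y\}$ and $\overline{\T}_\theta^{\perp}=\{Y\mid\theta(Y')<0\text{ for all nonzero }Y'\subseteq Y\}$. The facts you import are all in the literature:
\begin{itemize}
\item completeness of the fan for $\tau$-tilting finite algebras;
\item $\T_\theta=\overline{\T}_\theta=\T$ inside the chamber of $\T$;
\item Asai's description of the relative interior of a common facet, where Jasso reduction identifies the value-zero semistables with $\Filt B$.
\end{itemize}
So the route is sound. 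It unpacks the wall-and-chamber theory rather than avoiding it, so it is not more elementary than the citation.

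The step you leave vague is that the normal cone of an edge meets the relative interior of a wall. The relative interior of that cone is a nonempty open subset of a hyperplane. It misses every chamber interior, since there the maximizer is unique. Because the fan is finite, this open set must therefore meet the relative interior of some codimension-one cone. By mutation, every such cone is the common facet of two chambers whose torsion classes form a cover.

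You can also avoid both density arguments. Your lemma shows, for every $\theta$ and every algebra, that the face exposed by $\theta$ lies in the coordinate box between $[t_{\T_\theta}(M)]$ and $[t_{\overline{\T}_\theta}(M)]$ and contains both corners. Hence:
\begin{itemize}
\item a vertex exposed by $\theta$ equals $[t_{\overline{\T}_\theta}(M)]$ outright;
\item an edge exposed by $\theta$ has these two corners as its endpoints.
\end{itemize}
For $\mathcal S$ in the finite interval $[\T_\theta,\overline{\T}_\theta]$, the quotient $t_{\mathcal S}(M)/t_{\T_\theta}(M)$ lies in $\overline{\T}_\theta\cap\T_\theta^\perp$, so it is semistable of value zero. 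Thus $[t_{\mathcal S}(M)]$ lies on the edge and is a vertex by (1), hence an endpoint. Along a maximal chain of this interval the vector therefore changes at exactly one cover, and that cover realizes the edge. The wall analysis is then needed only for the assertion that every cover gives an edge or a point.
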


We next recall the brick labels of covers in $\tors A$ and the
filtrations associated with maximal chains. A brick chain
$(B_1,\ldots,B_t)$ is \emph{complete} if there do not exist a brick
$B$ and an index $0\leq i\leq t$ such that
$(B_1,\ldots,B_i,B,B_{i+1},\ldots,B_t)$ is a brick chain.
We regard brick chains up to isomorphism of their corresponding bricks.

\begin{proposition}[{\cite[Theorems~3.3 and~3.4]{DIRRT};
\cite[Theorem~A.3]{Keller}; \cite[Theorems~2.7 and~11.2]{RingelFiltrations}}]\label{prop:ringel-chains}
Let $A$ be $\tau$-tilting finite.
\begin{enumerate}
\item\label{item:ringel-label}
Every cover $\T'\lessdot\T$ has a unique brick $B\in\T$, up to
isomorphism, such that $\T'=\T\cap{}^\perp B$. This brick is called
its \emph{label}, and $\T$ is the torsion class generated by $\T'$ and $B$.
Every module $X\in\T$ has a submodule $X'\in\T'$ with
$X/X'\in\Filt B$.
\item\label{item:ringel-complete}
Every brick chain is a subsequence of a complete brick chain. Labeling
the successive covers in a maximal chain of $\tors A$, in increasing
order, gives a bijection from maximal chains in $\tors A$ to complete brick
chains, up to isomorphism of their bricks.
\item\label{item:ringel-unique}
For every complete brick chain and every module $M$, there is a unique
brick chain filtration of $M$ whose type is a subsequence of that chain.
\end{enumerate}
\end{proposition}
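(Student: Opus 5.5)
The plan is to derive all three parts from the torsion sequence, Lemma~\ref{lem:torsion-part-characterization}, and one observation: for a set of modules $\mathcal B$, a module $X$ satisfies $\Hom_A(\Tgen(\mathcal B),X)=0$ if and only if $\Hom_A(B,X)=0$ for all $B\in\mathcal B$. This holds because ${}^\perp X$ is a torsion class, as in the proof of Proposition~\ref{prop:torsion-parts}. Since $A$ is $\tau$-tilting finite, $\tors A$ is finite, so maximal chains are finite and every torsion class is functorially finite.

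For part \ref{item:ringel-label}, fix a cover $\T'\lessdot\T$ and consider $\mathcal W=\T\cap\T'^{\perp}$. For $X\in\T$, the torsion sequence for $\T'$ gives $t_{\T'}(X)\in\T'$. The quotient $X/t_{\T'}(X)$ lies in $\T$, because $\T$ is closed under factor modules, and it lies in $\T'^\perp$. Hence $X/t_{\T'}(X)\in\mathcal W$, and $\mathcal W\neq0$ since $\T'\neq\T$.
\begin{itemize}
\item Choose a nonzero $B\in\mathcal W$ of minimal length. A nonzero non-invertible endomorphism of $B$ has image in $\mathcal W$: the image is a quotient of $B$, so it lies in $\T$, and it is a submodule of $B$, so it lies in $\T'^\perp$. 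This image is shorter than $B$, contradicting minimality; hence $B$ is a brick.
\item Next, $\Tgen(\T'\cup\{B\})$ lies strictly between $\T'$ and $\T$ or equals $\T$. The cover condition forces it to equal $\T$.
\item Every $X\in\mathcal W$ then lies in $\Filt B$. Since $X\in\T'^\perp$ and $X\ne0$, the generation statement forces $\Hom_A(B,X)\neq0$. A nonzero map $B\to X$ is injective: its image lies in $\mathcal W$, so by minimality of length it cannot be a proper quotient of $B$. The cokernel stays in $\mathcal W$, and induction on length finishes.
\item This gives $\T'=\T\cap{}^\perp B$. The inclusion $\subseteq$ holds because $B\in\T'^\perp$. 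For $\supseteq$, if $X\in\T$ with $\Hom_A(X,B)=0$, then $X/t_{\T'}(X)\in\Filt B$ receives no nonzero map from $X$, so it vanishes.
\item Uniqueness of $B$ follows from \cite[Lemma~9.7]{RingelFiltrations}, since any label lies in $\mathcal W=\Filt B$.
\end{itemize}
I expect this part to be the main obstacle, specifically the step that every object of $\mathcal W$ is filtered by the single brick $B$.

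For part \ref{item:ringel-complete}, let $0=\T_0\lessdot\cdots\lessdot\T_t=\modu A$ be a maximal chain with labels $B_i$. For $i<j$ we have $B_i\in\T_i\subseteq\T_{j-1}$ and $B_j\in\T_{j-1}^\perp$, so $\Hom_A(B_i,B_j)=0$. Conversely, send a brick chain $(B_1,\ldots,B_t)$ to the chain $\T_i=\Tgen(B_1,\ldots,B_i)$. By the observation, $B_j\in\T_i^\perp$ for all $j>i$, so this chain is strictly increasing. A label $B$ of a cover inside the interval $[\T_{i-1},\T_i]$ lies in $\T_i\cap\T_{i-1}^\perp$. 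One checks that $(B_1,\ldots,B_{i-1},B,B_i,\ldots)$ or $(B_1,\ldots,B_i,B,\ldots)$ is again a brick chain, using the observation and $B\in\T_{i-1}^\perp$. The last interval $[\T_t,\modu A]$ is handled the same way. Thus completeness forces every step to be a cover with label $B_i$ and forces $\T_t=\modu A$, and the two constructions are mutually inverse. Every brick chain extends to a complete one because lengths of brick chains are bounded by the finite length of maximal chains in $\tors A$.

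For part \ref{item:ringel-unique}, take the maximal chain attached to the complete brick chain and set $M_i=t_{\T_i}(M)$, deleting repetitions. By part \ref{item:ringel-label}, each factor $M_i/M_{i-1}$ lies in $\T_i\cap\T_{i-1}^\perp=\Filt B_i$, which gives existence. For uniqueness, consider any filtration whose type is a subsequence. Each term $N$ lies in $\Filt\{B_1,\ldots,B_i\}\subseteq\T_i$, and $M/N$ lies in $\Filt\{B_{i+1},\ldots\}\subseteq\T_i^\perp$ by the observation. Lemma~\ref{lem:torsion-part-characterization} then gives $N=t_{\T_i}(M)$.
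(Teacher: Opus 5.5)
Your strategy gives a self-contained route where the paper only cites \cite{DIRRT}, Demonet's appendix \cite{Keller} and \cite{RingelFiltrations}. However, two steps do not hold as written, and a third is missing.

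\textbf{Part~(1), third bullet.} The claim that the cokernel $X/B$ stays in $\mathcal W$ is unjustified. You do have $X/B\in\T$, but $\T'^\perp$ is closed under submodules and extensions, not under quotients. Nothing you have used up to that point (minimal length of $B$, $\Hom_A(B,X)\neq0$, injectivity) needs more than $\T=\Tgen(\T'\cup\{B\})$, and with only that the step is false. Take the path algebra of $1\to2$ from Example~\ref{ex:pentagon}, with $\T'=\add S_1\subsetneq\T=\modu A$, which is not a cover. Then $\mathcal W=\add(S_2\oplus X)$, $B=S_2$, and $\Tgen(\T'\cup\{S_2\})=\modu A$. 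The map $S_2\hookrightarrow X$ is injective, but its cokernel $S_1$ lies in $\T'$, and $X\notin\Filt S_2$.

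So the cover property must be used again at this point. One repair runs as follows.
\begin{enumerate}
\item First obtain $\T'=\T\cap{}^\perp B$ directly: this is a torsion class between $\T'$ and $\T$ that does not contain $B$.
\item Let $Y\subseteq X$ be the preimage of $t_{\T'}(X/B)$. Then $Y\in\mathcal W$, so $Y\notin\T'=\T\cap{}^\perp B$, and there is a nonzero $g\colon Y\to B$.
\item We have $g|_B\neq0$, since otherwise $g$ factors through $Y/B\in\T'$ while $B\in\T'^\perp$. Hence $g|_B$ is an automorphism of the brick $B$, so $B$ splits off $Y$.
\item Therefore $Y/B\in\T'\cap\T'^\perp=0$, that is, $X/B\in\mathcal W$.
\end{enumerate}

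\textbf{Part~(2), insertion step.} The step fails for an arbitrary label. A cover inside $[\T_{i-1},\T_i]$ may be labelled by $B_i$ itself, and your check only compares $B$ with $B_j$ for $j\neq i$. In the same example, the brick chain $(S_1,S_2)$ gives $\add S_1\subsetneq\modu A$, which is not a cover. The cover $\add(X\oplus S_1)\lessdot\modu A$ lies inside this interval and has label $S_2$, so neither insertion is a brick chain.

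You need to take a cover inside $[\T_{i-1},\T_i\cap{}^\perp B_i]$, whose label $C$ satisfies $\Hom_A(C,B_i)=0$. You must then prove that $\T_i\cap{}^\perp B_i\neq\T_{i-1}$ whenever $\T_{i-1}\subsetneq\T_i$ is not a cover, and this is where the brick property of $B_i$ enters. Suppose $\T_{i-1}\subsetneq\mathcal V\subsetneq\T_i$ and $\T_i\cap{}^\perp B_i=\T_{i-1}$.
\begin{enumerate}
\item Then $t_{\mathcal V}(B_i)\neq B_i$, because $B_i\notin\mathcal V$.
\item Also $t_{\mathcal V}(B_i)\neq0$, since otherwise $\mathcal V\subseteq\T_i\cap{}^\perp B_i$.
\item The nonzero quotient $B_i/t_{\mathcal V}(B_i)$ lies in $\T_i\cap\T_{i-1}^\perp$, hence not in $\T_{i-1}=\T_i\cap{}^\perp B_i$. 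So it has a nonzero map to $B_i$.
\item Composing with the projection gives a nonzero, non-injective endomorphism of $B_i$, a contradiction.
\end{enumerate}

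\textbf{Part~(2), completeness of labels.} You never show that the labels of a maximal chain form a \emph{complete} brick chain, which the bijection requires. Suppose $B$ could be inserted after $B_i$, and choose $j$ minimal with $B\in\T_j$. Then $j>i$ because $B\in\T_i^\perp$, and $B\in\T_j\cap{}^\perp B_j=\T_{j-1}$, a contradiction.

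With these repairs, your part~(3) goes through as written.
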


Part~\ref{item:ringel-label} follows from
\cite[Theorems~3.3(b) and~3.4(b)]{DIRRT} and is stated in this form in
\cite[Theorem~2.7]{RingelFiltrations}; see also
\cite[Theorems~1.2 and~1.3]{BCZ}, and \cite[Definition~2.14]{Asai} for
functorially finite torsion classes.
For part~\ref{item:ringel-complete}, there are only finitely many bricks
up to isomorphism by \cite[Theorem~4.2]{DIJ}, so inserting further bricks
must eventually give a complete chain. The bijection is proved by Demonet
in his appendix to Keller's survey \cite[Theorem~A.3]{Keller}.
Part~\ref{item:ringel-unique} is \cite[Theorem~11.2]{RingelFiltrations}.

The next lemma describes the filtration in
Proposition~\ref{prop:ringel-chains}\ref{item:ringel-unique}
by evaluating torsion parts.

\begin{lemma}\label{lem:evaluate-chain}
Let $A$ be $\tau$-tilting finite, let $M\in\modu A$, and let
$0=\T_0\lessdot\T_1\lessdot\cdots\lessdot\T_n=\modu A$ be a
maximal chain, with successive labels $B_1,\ldots,B_n$. After deleting
repeated terms, the chain
\[
 0=t_{\T_0}(M)\subseteq t_{\T_1}(M)\subseteq\cdots
   \subseteq t_{\T_n}(M)=M
\]
is the unique brick chain filtration whose type is a subsequence of
$(B_1,\ldots,B_n)$. Every brick chain filtration is obtained from a
maximal torsion chain in this way.
\end{lemma}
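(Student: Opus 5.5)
The plan is to show directly that the deduplicated chain of torsion parts is a brick chain filtration whose type is a subsequence of $(B_1,\ldots,B_n)$. Uniqueness then follows from Proposition~\ref{prop:ringel-chains}\ref{item:ringel-unique}, because $(B_1,\ldots,B_n)$ is a complete brick chain by Proposition~\ref{prop:ringel-chains}\ref{item:ringel-complete}. For the last assertion, start from an arbitrary brick chain filtration of type $(C_1,\ldots,C_t)$. By Proposition~\ref{prop:ringel-chains}\ref{item:ringel-complete}, extend $(C_1,\ldots,C_t)$ to a complete brick chain, and realize that chain as the label sequence of a maximal chain of $\tors A$. The first part then shows that evaluating torsion parts along this maximal chain yields a brick chain filtration of type a subsequence of the complete chain. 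The given filtration also has this property, so uniqueness forces the two to coincide.

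Monotonicity comes first: $\T_{i-1}\subseteq\T_i$ gives $t_{\T_{i-1}}(M)\subseteq t_{\T_i}(M)$, since the former is a submodule of $M$ lying in $\T_i$. Next, for each cover $\T_{i-1}\lessdot\T_i$, I will show that
\[
 t_{\T_i}(M)/t_{\T_{i-1}}(M)\in\Filt B_i .
\]
Put $X=t_{\T_i}(M)\in\T_i$. By Proposition~\ref{prop:ringel-chains}\ref{item:ringel-label}, there is $X'\subseteq X$ with $X'\in\T_{i-1}$ and $X/X'\in\Filt B_i$. It remains to identify $X'$ with $t_{\T_{i-1}}(M)$, and I will do this with Lemma~\ref{lem:torsion-part-characterization}, that is, by checking $X/X'\in\T_{i-1}^\perp$.

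This check uses $\T_{i-1}=\T_i\cap{}^\perp B_i$. Suppose $Y\in\T_{i-1}$ and $f\colon Y\to X/X'$ is nonzero. The image $f(Y)$ is a factor of $Y$, so it lies in $\T_{i-1}$. On the other hand, $f(Y)$ is a nonzero submodule of a module in $\Filt B_i$. Taking the smallest filtration term of $X/X'$ that meets $f(Y)$ nontrivially, we get a nonzero map from a submodule of $f(Y)$ into $B_i$. The obstacle is that a submodule of $f(Y)$ need not lie in $\T_{i-1}$. To get around this, I intend to use a quotient description instead. The filtration on $X/X'$ has $B_i$ as its top factor, so it suffices to show that $\Hom(Z,B_i)=0$ for all $Z\in\T_{i-1}$ forces $\Hom(Z,W)=0$ for all $W\in\Filt B_i$. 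This is immediate by induction on the filtration length, because $\Hom(Z,-)$ is left exact: $\Hom(Z,W)$ sits between $\Hom(Z,W')$ and $\Hom(Z,B_i)$, where $W'$ is the submodule with $W/W'\simeq B_i$. Hence $X/X'\in\T_{i-1}^\perp$, so $X'=t_{\T_{i-1}}(M)$, and the claim holds.

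After deleting repeated terms, the nonzero factors lie in $\Filt B_{i}$ for a strictly increasing sequence of indices. Since $\Hom(B_i,B_j)=0$ for $i<j$, their type is a subsequence brick chain. The endpoints are $t_0(M)=0$ and $t_{\modu A}(M)=M$. I expect the only delicate step to be the identification $X'=t_{\T_{i-1}}(M)$, and the left-exactness argument above should settle it.
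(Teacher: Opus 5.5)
Your proposal is correct and follows the paper's proof essentially step for step. You apply Proposition~\ref{prop:ringel-chains}\ref{item:ringel-label} to $t_{\T_i}(M)$, extend the Hom-vanishing $\T_{i-1}\subseteq{}^\perp B_i$ over $\Filt B_i$ by left exactness, use Lemma~\ref{lem:torsion-part-characterization} to identify $X'$, and invoke parts~\ref{item:ringel-complete} and~\ref{item:ringel-unique} both for uniqueness and for recovering an arbitrary filtration. One step should be made explicit: checking $X/X'\in\T_{i-1}^\perp$ only gives $X'=t_{\T_{i-1}}(X)$, with $X=t_{\T_i}(M)$ as the ambient module, so you still need $t_{\T_{i-1}}(t_{\T_i}(M))=t_{\T_{i-1}}(M)$, which follows from your monotonicity remark $t_{\T_{i-1}}(M)\subseteq t_{\T_i}(M)$, exactly as the paper notes that this equality uses $\T_{i-1}\subseteq\T_i$.
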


\begin{proof}
Fix a cover $\T'\lessdot\T$ with label $B$. Apply
Proposition~\ref{prop:ringel-chains}\ref{item:ringel-label} to the
module $t_\T(M)\in\T$. It gives a submodule $X\in\T'$ with
$t_\T(M)/X\in\Filt B$. Since $\T'\subseteq{}^\perp B$, extension
closure of Hom-vanishing gives $\Hom_A(Y,Z)=0$ for
$Y\in\T'$ and $Z\in\Filt B$. Thus
Lemma~\ref{lem:torsion-part-characterization} gives
$X=t_{\T'}(t_\T(M))=t_{\T'}(M)$; the last equality uses
$\T'\subseteq\T$.
The nonzero factors of the evaluated chain therefore belong to the
categories $\Filt B_i$, in the order of their labels. These labels form
a complete brick chain by
Proposition~\ref{prop:ringel-chains}\ref{item:ringel-complete}, so
this is a brick chain filtration. Its uniqueness follows from
part~\ref{item:ringel-unique} of the same proposition.
Finally, extend the type of any given filtration to a complete chain
using part~\ref{item:ringel-complete}. Evaluating the corresponding
maximal torsion chain recovers that filtration by uniqueness.
\end{proof}

\begin{proposition}\label{prop:filtration-edges}
Let $A$ be $\tau$-tilting finite and let $M\in\modu A$.
\begin{enumerate}
\item\label{item:solid-vertices}
The map $U\mapsto[U]$ is a bijection from $\Sol_A(M)$ to the
vertices of $P(M)$.
\item\label{item:filtration-edges}
For submodules $U\subsetneq V$ of $M$, the terms $U,V$ occur
consecutively in some brick chain filtration if and only if $[U],[V]$
are the endpoints of an increasing edge of $P(M)$.
\end{enumerate}
In particular, brick chain filtrations inject into the increasing
edge paths of $P(M)$.
\end{proposition}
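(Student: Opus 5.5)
For part~\ref{item:solid-vertices}, I combine Proposition~\ref{prop:torsion-parts} with Theorem~\ref{thm:polytope-torsion}\ref{item:polytope-vertices}. The solid submodules are exactly the torsion parts $t_\T(M)$ for $\T\in\tors A$. The vertices of $P(M)$ are exactly the vectors $[t_\T(M)]$. So $U\mapsto[U]$ maps $\Sol_A(M)$ onto the vertex set, and it remains to prove injectivity.

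Suppose $U,U'\in\Sol_A(M)$ satisfy $[U]=[U']$. By~\eqref{eq:interval} I may write $U=t_\T(M)$ with $\T=\Tgen(U)$. Since $U\in\Tgen(U)$, it follows that $U\cap U'\subseteq U$, and $U/(U\cap U')\simeq(U+U')/U'\subseteq M/U'$. I would rather argue via maximality of torsion parts. Take $\T=\Tgen(U)$ and $\T''=\Tgen(U')$, and consider the torsion class $\T\vee\T''$ together with $t_{\T\vee\T''}(M)\supseteq U+U'$. This seems clumsy, so my preferred route is the linear functional argument. A vertex $v$ of $P(M)$ is the unique maximizer of some generic linear functional $\theta$ on $P(M)$. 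If two distinct submodules $U\ne U'$ both had $[U]=[U']=v$, then $[U+U']+[U\cap U']=[U]+[U']=2v$. Both $[U+U']$ and $[U\cap U']$ lie in $P(M)$, so both have $\theta$-value at most $\theta(v)$, and hence both equal $v$. Then $U\cap U'\subsetneq U$ has the same composition vector as $U$, which forces $U\cap U'=U$ by length. By symmetry $U=U'$. Hence at most one submodule has composition vector $v$, which gives injectivity.

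For part~\ref{item:filtration-edges}, the forward direction uses Lemma~\ref{lem:evaluate-chain}. Every brick chain filtration comes from evaluating a maximal chain $\T_0\lessdot\cdots\lessdot\T_n$. Consecutive distinct terms $U\subsetneq V$ are then $t_{\T_{i-1}}(M)\ne t_{\T_i}(M)$ for some cover. Strictly, they arise after deleting repeats, so $U=t_{\T_{i-1}}(M)$ and $V=t_{\T_i}(M)$ for a single cover $\T_{i-1}\lessdot\T_i$, because each step changes the term at most once. Theorem~\ref{thm:polytope-torsion}\ref{item:polytope-edges} then says that $[U],[V]$ span an increasing edge.

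For the converse, Theorem~\ref{thm:polytope-torsion}\ref{item:polytope-edges} gives a cover $\T'\lessdot\T$ with $\{[t_{\T'}(M)],[t_\T(M)]\}=\{[U],[V]\}$. By part~\ref{item:solid-vertices}, $U$ and $V$ are solid and uniquely determined by their vectors. Since $\T'\subseteq\T$ forces $t_{\T'}(M)\subseteq t_\T(M)$, we get $U=t_{\T'}(M)$ and $V=t_\T(M)$. Next I extend $\T'\lessdot\T$ to a maximal chain of $\tors A$, which is possible because $\tors A$ is finite with minimum $0$ and maximum $\modu A$. Lemma~\ref{lem:evaluate-chain} produces a brick chain filtration in which $U$ and $V$ appear consecutively.

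For the final injection, a filtration is determined by its chain of terms. Part~\ref{item:solid-vertices} together with Lemma~\ref{lem:filtration-solid} sends this chain to a sequence of vertices, and this sequence determines the terms. Consecutive terms give increasing edges, and the path runs from $0$ to $\mathbf d$. The main obstacle is the injectivity in part~\ref{item:solid-vertices}. The modular identity $[U+U']+[U\cap U']=[U]+[U']$, combined with extremality of a vertex, is the key step there.
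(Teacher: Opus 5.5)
Your proof is correct and follows the paper's argument. Surjectivity in (1) comes from Proposition~\ref{prop:torsion-parts} and Theorem~\ref{thm:polytope-torsion}, and injectivity comes from $[U+U']+[U\cap U']=2v$ together with extremality of the vertex; you rightly prove this for arbitrary submodules and reuse it in the converse of (2), which you then handle via Lemma~\ref{lem:evaluate-chain} and by lifting edges to covers. The only differences are cosmetic: you use a functional uniquely maximized at $v$ where the paper uses the midpoint formulation, and in a write-up you should delete the abandoned first attempt and replace $U\cap U'\subsetneq U$ by $U\cap U'\subseteq U$.
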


\begin{proof}
(1) Proposition~\ref{prop:torsion-parts} and
Theorem~\ref{thm:polytope-torsion}\ref{item:polytope-vertices} show
that the map has precisely the vertices as its image. To prove
injectivity, suppose $[U]=[V]=x$ is a vertex and $U\ne V$.
The exact sequence
\[
 0\longrightarrow U\cap V\longrightarrow U\oplus V
   \longrightarrow U+V\longrightarrow0
\]
gives $2x=[U\cap V]+[U+V]$. The two vectors on the right are distinct,
since $U\cap V$ is a proper submodule of $U$ and $U$ is a proper
submodule of $U+V$. Thus $x$ is the midpoint of two distinct points
of $P(M)$, contradicting extremality. This also shows that any submodule
whose composition vector is a vertex is uniquely determined by that vector.

(2) By Lemma~\ref{lem:evaluate-chain}, a given filtration is obtained
by evaluating a maximal torsion chain. Every consecutive pair
therefore comes from a cover on which the torsion part changes, and
Theorem~\ref{thm:polytope-torsion}\ref{item:polytope-edges} gives an edge.
Conversely, lift an increasing edge to a cover, extend that cover to a
maximal torsion chain, and apply Lemma~\ref{lem:evaluate-chain}.
The lifted cover gives
consecutive distinct terms, which must be $U,V$ by vertex uniqueness.
The same uniqueness shows that the edge path determines the actual
filtration, proving injectivity.
\end{proof}

For related descriptions of the vertices of $P(M)$, see
\cite[Theorem~4.2 and Proposition~4.7(1)]{FeiComb} and
\cite[Theorem~3.5 and Lemma~4.1]{AsaiIyama}.

The next example shows the polytope, its vertices and its increasing
edge paths for a module with two isomorphism classes of simple
composition factors, each occurring with multiplicity two.

\begin{example}\label{ex:pentagon}
Let $A$ be the path algebra of the quiver $1\to2$, and let $X$ be the
indecomposable projective module with top $S_1$; as a representation,
$X$ is $k\to k$ with the identity map, and its socle is $S_2$. Put
$M=X\oplus S_1\oplus S_2$, so that $[M]=(2,2)$ in the basis
$([S_1],[S_2])$. The torsion classes of $A$ are
\[
 0,\quad \add S_2,\quad \add S_1,\quad \add(X\oplus S_1),\quad\modu A.
\]
Their respective torsion parts in $M$ are
\[
 0,\quad S_2^2,\quad S_1,\quad X\oplus S_1,\quad M,
\]
with composition vectors $(0,0)$, $(0,2)$, $(1,0)$, $(2,1)$, $(2,2)$.
By Proposition~\ref{prop:filtration-edges}\ref{item:solid-vertices},
these five submodules are the solid submodules of $M$, and their
vectors are the vertices of $P(M)$:
\[
\begin{tikzpicture}[scale=1.5,baseline=(current bounding box.center)]
  \draw[step=1,gray!35,thin] (0,0) grid (2,2);
  \draw[gray!70,thin] (0,0) rectangle (2,2);
  \fill[gray!12] (0,0)--(1,0)--(2,1)--(2,2)--(0,2)--cycle;
  \draw[thick] (0,0)--(0,2)--(2,2);
  \draw[thick] (0,0)--(1,0)--(2,1)--(2,2);
  \foreach \p/\l/\pos in {(0,0)/$0$/below left,(1,0)/$[S_1]$/below,(2,1)/$[X\oplus S_1]$/right,(2,2)/$[M]$/above right,(0,2)/$[S_2^2]$/above left}
    {\fill \p circle (1.8pt); \node[\pos,font=\small] at \p {\l};}
  \foreach \p in {(0,1),(1,1),(1,2)} {\draw[gray] \p circle (1.4pt);}
\end{tikzpicture}
\]
The polytope has five edges, all increasing, and exactly two increasing
edge paths from $0$ to $[M]$: the path through $[S_2^2]$ and the path
through $[S_1]$ and $[X\oplus S_1]$.
Evaluating the two maximal chains of torsion classes by
Lemma~\ref{lem:evaluate-chain} gives the brick chain filtrations
\[
 0\subsetneq S_2^2\subsetneq M
 \qquad\text{and}\qquad
 0\subsetneq S_1\subsetneq X\oplus S_1\subsetneq M,
\]
of types $(S_2,S_1)$ and $(S_1,X,S_2)$.
Since there are only two increasing edge paths, the injection in
Proposition~\ref{prop:filtration-edges} shows that $M$ has no other brick
chain filtration. The box $[0,(2,2)]$ contains $4!/(2!\,2!)=6$ lattice
paths with unit coordinate steps from $0$ to $(2,2)$, which is the value
of the bound in Theorem~\ref{thm:multinomial} for $M$.
\end{example}

The injection need not be surjective: an increasing path can violate
the Hom condition between nonadjacent factors. Fei observes that
consecutive edge factors satisfy forward Hom-vanishing, whereas
nonconsecutive factors need not
\cite[Example~8.8 and the preceding discussion]{Fei}.
The following example exhibits this obstruction for a
representation-finite algebra.

\begin{example}\label{ex:path-not-filtration}
Let $A$ be the path algebra of an oriented three-cycle modulo all
paths of length four. This is a representation-finite Nakayama algebra.
Let $M$ be an indecomposable projective, with simple factors
$S,T,R,S$ from socle to top, and let $U_i$ be its unique submodule of
length $i$. In the basis $([S],[T],[R])$, its submodule vectors
$v_i=[U_i]$ are
\[
 v_0=(0,0,0),\quad v_1=(1,0,0),\quad v_2=(1,1,0),\quad
 v_3=(1,1,1),\quad v_4=(2,1,1).
\]
Their convex hull is a pyramid with parallelogram base
$v_0,v_1,v_4,v_3$ in the plane $y=z$ and apex $v_2$.
The segments $[v_0,v_1]$ and $[v_3,v_4]$ are base edges.
Every segment joining the apex to a base vertex is an edge of the
pyramid, so $[v_1,v_2]$ and $[v_2,v_3]$ are edges as well.
Thus $v_0,v_1,v_2,v_3,v_4$ is an increasing edge path.
The corresponding composition series has successive factors $S,T,R,S$,
so it is not a brick chain filtration because $\Hom_A(S,S)\ne0$.
\end{example}

\subsection{Counting filtrations and the equality \texorpdfstring{$\bcount_A(M)=\ell(M)!$}{b(M)=length(M)!}}

We bound the number of increasing edge paths by encoding them as
lattice paths with coordinate unit steps. An edge need not be a
coordinate step: in Example~\ref{ex:pentagon}, the edge from $[S_1]$ to
$[X\oplus S_1]$ has direction $(1,1)$. We choose the encoding so
that the vertices of the original path are precisely the polytope
vertices visited by its encoding. This lets us recover the edge path
and gives an injection into a set of lattice paths that we can count.

\begin{lemma}\label{lem:lattice-paths}
Let $\mathbf d=(d_1,\ldots,d_r)\in\mathbb Z_{\geq0}^r$, and let $P$ be an
integral polytope contained in $[0,\mathbf d]$, with vertices $0,\mathbf d$. The number
of increasing edge paths from $0$ to $\mathbf d$ is at most
\[
 \frac{(d_1+\cdots+d_r)!}{d_1!\cdots d_r!}.
\]
\end{lemma}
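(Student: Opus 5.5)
We inject the set of increasing edge paths of $P$ into the set of monotone lattice paths from $0$ to $\mathbf d$ that use unit coordinate steps $e_1,\ldots,e_r$. The number of such lattice paths is the multinomial coefficient. A lattice path is a word containing $d_i$ letters $i$, so there are $m!/(d_1!\cdots d_r!)$ of them with $m=\sum d_i$.

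\textbf{The encoding.} For an increasing edge from $u$ to $v$, the difference $v-u$ is a nonzero vector in $\mathbb Z_{\ge0}^r$. We replace the edge by the lattice path that performs all $e_1$-steps first, then all $e_2$-steps, and so on through $e_r$. This path stays inside the box $[u,v]\subseteq[0,\mathbf d]$. Concatenating these pieces along an edge path from $0$ to $\mathbf d$ gives a monotone lattice path $\lambda$ from $0$ to $\mathbf d$.

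\textbf{Recovering the edge path.} We claim that the vertices of the original edge path are exactly the vertices of $P$ that $\lambda$ visits. Once this holds, the map is injective: an increasing edge path is determined by its ordered set of vertices, and that ordering is forced because the coordinate sums strictly increase along $\lambda$. The paths' own vertices are visited by construction. It remains to show that an intermediate lattice point $w$ on the piece replacing an edge $[u,v]$ is never a vertex of $P$.

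\textbf{The main obstacle.} The hard step is excluding a vertex $w\ne u,v$ of $P$ lying in the box $[u,v]$ on the staircase from $u$ to $v$. Since $[u,v]$ is an edge, choose a linear functional $\phi$ whose maximum on $P$ is attained exactly on the segment $[u,v]$. Then $\phi(w)<\phi(u)=\phi(v)$, so no contradiction comes from $\phi$ alone.

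My plan is to perturb the choice of ordering instead of fixing $e_1,\ldots,e_r$. Order the coordinates according to the sign of $\phi$ in each coordinate direction, so the staircase hugs the segment on the side where $P$ cannot reach. This approach is risky and I expect it to require care.

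A cleaner alternative, which I would try first, is to choose a single generic weight vector $c$ with positive entries and $c_1\gg c_2\gg\cdots\gg c_r$. The aim is to show that a vertex $w$ in the open staircase would let us shorten, or lexicographically improve, a face argument using the faces of $P$ inside $[u,v]$. Concretely, I would consider $F=P\cap[u,v]$. If $w\in F$ is a vertex of $P$ other than $u$ and $v$, then $u$, $w$ and $v$ are distinct vertices of $P$ with $u\le w\le v$. We want to contradict the fact that $[u,v]$ is an edge by exhibiting a convex combination of other points of $P$ equal to a point of the open segment $(u,v)$.

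If this fails in general, the fallback is to drop the vertex-recovery requirement. Instead, record along $\lambda$ markers for the edge endpoints, and bound the number of edge paths by counting lattice paths directly. This works if distinct edge paths always give distinct staircase words, which holds whenever the staircase pieces of different edges emanating from $u$ begin with different letters or differ later. I would check this by comparing edges out of $u$ via their first differing coordinate.
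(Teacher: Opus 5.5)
\textbf{There is a genuine gap.} The construction you commit to does not work, and neither of your preferred repairs can work.

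\emph{The fixed-order staircase is not injective.} Take $\mathbf d=(2,2)$ and $P=\operatorname{conv}\{(0,0),(2,0),(2,2)\}$. Both $(0,0)\to(2,2)$ and $(0,0)\to(2,0)\to(2,2)$ are increasing edge paths. With the $e_1$-first order both are encoded as the word $1122$, and any other fixed order fails on the mirror-image triangle.

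\emph{Your ``cleaner alternative'' aims at a false statement.} In this triangle, $u=(0,0)$, $w=(2,0)$ and $v=(2,2)$ are distinct vertices with $u\le w\le v$, while $[u,v]$ is an edge. So no contradiction with edgeness can be extracted from a vertex lying in the box $[u,v]$.

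\emph{Your fallback fails for the same reason.} It rests on distinct edge paths always giving distinct staircase words, and the two paths above give the same word. Words decorated with endpoint markers are also no longer counted by the multinomial coefficient.

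\textbf{The fix.} What works is the idea you set aside as risky; it is exactly the paper's proof. Contrary to your remark, $\phi$ alone suffices once the order of steps is adapted to it.

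Choose $\phi$ whose maximum on $P$ is attained exactly on $[u,v]$; take $\phi=0$ if $P$ is a segment. Arrange the $v_i-u_i$ copies of each $e_i$ in three blocks: first the steps with $\phi(e_i)>0$, then those with $\phi(e_i)=0$, then those with $\phi(e_i)<0$. The step values sum to $\phi(v)-\phi(u)=0$.
\begin{enumerate}
\item Before the negative block, every partial sum is nonnegative.
\item During the negative block, a partial sum equals minus the sum of the remaining values, all of which are negative, so it is again nonnegative.
\end{enumerate}
Hence every intermediate lattice point $x$ satisfies $\phi(x)\ge\max_P\phi$, whether or not $x\in P$.

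Every vertex of $P$ other than $u,v$ has $\phi$-value strictly below $\max_P\phi$. A vertex attaining the maximum would be an extreme point of the segment $[u,v]$, hence equal to $u$ or $v$. So the staircase meets no other vertex of $P$.

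Consequently, the vertices of $P$ visited by the concatenated lattice path recover the edge path, in the order forced by the strictly increasing coordinate sums, as you noted. This gives the injection into the multinomially many lattice paths.

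In the triangle, $\phi(x,y)=y-x$ encodes the diagonal as $2211$, while the other path is still $1122$, so the two are now distinguished.
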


\begin{proof}
For an increasing edge $[u,v]$, choose a linear functional $\theta$
whose maximum on $P$ is attained exactly on that edge. If $P$ is
itself a segment, we may take $\theta=0$.
Replace the edge by $v_i-u_i$ copies of the coordinate step $e_i$,
placing steps with positive $\theta$-value first, then those with value
zero, then those with negative value. Fix one such order for each edge.
Since $\theta(u)=\theta(v)$, the total of the step values is zero.
Before the negative steps, all partial sums are nonnegative.
During the negative steps, a partial sum is the negative of the sum
of the remaining nonpositive values, so it is again nonnegative.
Every intermediate lattice point $x$ therefore satisfies
$\theta(x)\geq\theta(u)=\max_P\theta$, whether or not $x$ lies in $P$.
All vertices of $P$ other than
$u,v$ have strictly smaller value, so the replacement meets no other
vertex of $P$.

Concatenate the replacements along an increasing edge path. The
polytope vertices visited by the resulting lattice path recover the
original path, proving injectivity. The number of lattice paths from
$0$ to $\mathbf d$ with steps $e_i$ is the displayed multinomial coefficient.
For $\mathbf d=0$, both sets consist of the empty path.
\end{proof}

The following theorem bounds the number of brick chain filtrations by
a multinomial coefficient in the simple composition multiplicities.

\begin{theorem}\label{thm:multinomial}
Let $A$ be a $\tau$-tilting finite algebra over a field $k$, and let
$S_1,\ldots,S_r$ represent its simple modules. Let $M$ have multiplicity
$d_i$ for $S_i$ and length $m=d_1+\cdots+d_r$. Then
\[
 \bcount_A(M)\leq\frac{m!}{d_1!\cdots d_r!}\leq m!.
\]
\end{theorem}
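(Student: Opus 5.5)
The bound follows by composing the two results just established. By Proposition~\ref{prop:filtration-edges}, the map sending a brick chain filtration $0=M_0\subsetneq\cdots\subsetneq M_t=M$ to the sequence $[M_0],\ldots,[M_t]$ is an injection into increasing edge paths of $P(M)$ from $0$ to $\mathbf d=[M]$. Each consecutive pair is an increasing edge by part~\ref{item:filtration-edges}. The chain is recovered from its vertex sequence, because each vertex determines its solid submodule by part~\ref{item:solid-vertices}. We then apply Lemma~\ref{lem:lattice-paths} to $P=P(M)$.

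To do so, we check the lemma's hypotheses. First, $P(M)$ is an integral polytope, being the convex hull of finitely many integer vectors. Second, $P(M)\subseteq[0,\mathbf d]$, since a submodule's multiplicities are bounded by those of $M$. Third, $0$ and $\mathbf d$ are vertices: as noted when $P(M)$ was introduced, they are the unique minimum and maximum of the coordinate sum. The lemma then gives $\bcount_A(M)\leq m!/(d_1!\cdots d_r!)$.

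If $M=0$, both sides equal $1$ by the convention $\bcount_A(0)=1$. The second inequality holds because the multinomial coefficient is at most $m!$, as each $d_i!\geq1$.

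There is no real obstacle, since the substantive work lies in Proposition~\ref{prop:filtration-edges}, which rests on the cited polytope theorem of Aoki--Higashitani--Iyama--Kase--Mizuno and on Ringel's uniqueness. The only point to watch is that injectivity really holds on actual chains of submodules rather than on types, and part~\ref{item:solid-vertices} provides exactly that.
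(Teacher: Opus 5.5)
Your proposal is correct and matches the paper's proof: compose the injection of Proposition~\ref{prop:filtration-edges} into increasing edge paths of $P(M)$ with Lemma~\ref{lem:lattice-paths} applied to $P(M)\subseteq[0,[M]]$. Your explicit checks of the lemma's hypotheses and of the case $M=0$ fill in details the paper leaves implicit.
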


\begin{proof}
Use the injection in Proposition~\ref{prop:filtration-edges} and apply
Lemma~\ref{lem:lattice-paths} to $P(M)\subseteq[0,[M]]$.
\end{proof}

In particular, $\bcount_A(M)\leq\ell(M)!$ for every module $M$ over a
representation-finite algebra.

When exactly two isomorphism classes of simple modules occur as
composition factors, the polytope gives an exact count.

\begin{corollary}\label{cor:two-simple-count}
Let $A$ be $\tau$-tilting finite and let exactly two isomorphism
classes of simple modules occur as composition factors of $M$. Then $\bcount_A(M)=2$.
\end{corollary}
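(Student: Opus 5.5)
The plan is to count the increasing edge paths of the submodule polytope and show that there are exactly two of them, each realized by a filtration. Only two coordinates of $[M]$ are nonzero, so $P(M)$ lies in a two-dimensional coordinate face of the box $[0,[M]]$, say the face spanned by $e_1,e_2$ with $d_1,d_2>0$.

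First I will check that $P(M)$ is a genuine polygon, not a segment. Consider the torsion classes $\Tgen(S_1)$ and $\Tgen(S_2)$. The torsion part $t_{\Tgen(S_2)}(M)$ is nonzero, since the socle of $M$ contains a simple module, but this needs care. Instead I will look at the extreme values of coordinates. The maximum of $-x_1$ (then $+x_2$) over submodules is attained by the largest submodule with no $S_1$ factor, whose vector is $(0,a)$ with $a>0$ if the socle contains $S_2$. Symmetrically, quotients give a point $(d_1,b)$ with $b<d_2$, using the largest quotient with no $S_2$ factors, i.e.\ the top. More directly, the point $\mathbf d$ and the point $0$ are both vertices. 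If $P(M)$ were the segment $[0,\mathbf d]$, every vertex would lie on that diagonal. Then I would use $\modu A\gtrdot\T$ with label a simple module: the coatoms of $\tors A$ are ${}^\perp S$ for simple $S$, whose torsion part omits $S$ from the top of $M$. Choosing $S=S_1$ in the top of $M$, the vertex $[t_{{}^\perp S_1}(M)]$ differs from $\mathbf d$ only by decreasing the first coordinate. It is therefore a vertex adjacent to $\mathbf d$ and off the diagonal unless it equals $0$, which forces $d_2=0$, a contradiction.

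Once $P(M)$ is a convex polygon containing $0$ and $\mathbf d$ as vertices with $P(M)\subseteq[0,\mathbf d]$, the boundary splits into two arcs from $0$ to $\mathbf d$. I will show every edge along each arc is increasing. The linear functional $x_1+x_2$ has unique minimum $0$ and unique maximum $\mathbf d$, so it is strictly monotone along each arc. The harder point is coordinatewise monotonicity. For this I will invoke Theorem~\ref{thm:polytope-torsion}\ref{item:polytope-edges}, which says every edge of $P(M)$ is increasing, meaning its endpoints are comparable. Hence both boundary arcs are increasing edge paths.

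Conversely, any increasing edge path from $0$ to $\mathbf d$ in a polygon runs along the boundary. Its first edge must be one of the two edges at $0$. Strict increase of $x_1+x_2$ rules out backtracking, and in a polygon each vertex has only two edges, so the path is forced to follow one arc. This gives exactly two increasing edge paths. The injection of Proposition~\ref{prop:filtration-edges} then gives $\bcount_A(M)\le2$.

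For surjectivity, the converse direction of Proposition~\ref{prop:filtration-edges}\ref{item:filtration-edges} lifts the first edge of each arc to a filtration. In a polygon, that filtration's edge path starts with this edge and so must be the corresponding arc. The two arcs start with different edges, so we obtain two distinct filtrations and $\bcount_A(M)=2$.

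The main obstacle is the non-degeneracy step, ruling out the possibility that $P(M)$ is a segment, since a segment would give only one path. I expect to settle it with the coatom argument above.
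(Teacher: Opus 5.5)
Your proof is correct, but it differs from the paper's at two points.

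\textbf{Non-degeneracy.} The paper's argument is elementary. The submodule vectors of a composition series lie in $P(M)$, and their successive differences include both $e_1$ and $e_2$. Since $0\in P(M)$, the affine hull of $P(M)$ is the whole coordinate plane. You instead use the coatom ${}^\perp S_1\lessdot\modu A$ together with Theorem~\ref{thm:polytope-torsion}\ref{item:polytope-vertices}. This works, but the key claim $[t_{{}^\perp S_1}(M)]=\mathbf d-ce_1$ with $c\geq1$ needs more justification than ``omits $S$ from the top''. You should note that the label of this cover is $S_1$. Then Proposition~\ref{prop:ringel-chains}\ref{item:ringel-label} and Lemma~\ref{lem:torsion-part-characterization} give $M/t_{{}^\perp S_1}(M)\in\Filt S_1$, exactly as in the proof of Lemma~\ref{lem:evaluate-chain}. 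The detour is also more than you need. You do not need a vertex; any submodule vector off the line through $0$ and $\mathbf d$ suffices. For example, a simple submodule of $M$ has vector $e_1$ or $e_2$.

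\textbf{Lower bound.} The paper quotes Ringel's result that a module whose composition factors are not all isomorphic has at least two brick chain filtrations; that result holds over arbitrary algebras. You instead lift the first edge of each boundary arc, using the converse in Proposition~\ref{prop:filtration-edges}\ref{item:filtration-edges}. This gives two filtrations with different first nonzero terms. Your route stays inside the paper's $\tau$-tilting machinery and directly shows that the injection into increasing edge paths is a bijection here, which the paper only remarks after the corollary. The paper's route is shorter but relies on an external, more general theorem.
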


\begin{proof}
The submodule vectors in a composition series lie in $P(M)$, and
their successive differences include both corresponding simple basis
vectors. Since $0\in P(M)$, its affine hull therefore contains the
coordinate plane spanned by these two vectors. All submodule vectors
lie in that plane, so $P(M)$ is a polygon, with $0,[M]$ as its
coordinatewise minimum and maximum. Its edge graph is a cycle.
An increasing path cannot revisit a vertex, so it must follow one
of the two boundary arcs from $0$ to $[M]$.
There are thus at most two such paths, so
Proposition~\ref{prop:filtration-edges} gives $\bcount_A(M)\leq2$.
Ringel proves that a module whose simple composition factors are not
all isomorphic has at least two brick chain filtrations
\cite[Proposition~10.3]{RingelFiltrations}, giving equality.
\end{proof}

In particular, for such a module the injection of
Proposition~\ref{prop:filtration-edges} is bijective.
Example~\ref{ex:path-not-filtration} shows that the injection can fail to be
surjective when three isomorphism classes of simple modules occur as
composition factors.

Example~\ref{ex:semisimple} shows that
$m!$ is attained by a direct sum of $m$ pairwise nonisomorphic simple
modules. We next prove the bound $m!/2+(m-1)!$ for nonsemisimple modules
over $\tau$-tilting finite algebras over algebraically closed fields.

A module is \emph{multiplicity-free} if every simple composition
multiplicity is at most one. If some multiplicity exceeds one,
Theorem~\ref{thm:multinomial} already gives the stronger bound $m!/2$.
It therefore remains to treat nonsemisimple multiplicity-free modules.
By Morita equivalence, we may work with a basic algebra. Over an
algebraically closed field, it has a presentation $kQ/I$ with $I$
admissible, and a multiplicity-free module has spaces $Me_i$ of
dimension at most one. Its submodules are determined by their supports.

Attach to such a module $M$ the directed graph $G$ whose vertices are
the supported vertices of $Q$ and whose arrows are those acting
nontrivially on $M$. This graph is acyclic. Indeed, each arrow acts
by a nonzero map between one-dimensional spaces. A directed cycle
would therefore give nonzero composites of arbitrarily large length,
whereas admissibility of $I$ makes every sufficiently long path act
as zero. We count filtrations by partitions of this graph.

For a finite acyclic directed graph $G$, allowing parallel arrows,
let $f(G)$ be the number of
ordered partitions of its vertices into nonempty blocks such that each
block induces a connected underlying undirected graph, and every arrow
between distinct blocks points from a later block to an earlier block.
The next lemma compares these counts when one arrow is deleted.

\begin{lemma}\label{lem:arrow-deletion}
If $e:x\to y$ is an arrow of a finite acyclic directed graph $G$, then
$f(G)\leq f(G-e)$.
\end{lemma}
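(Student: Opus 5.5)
The plan is to construct an explicit injection from the ordered partitions counted by $f(G)$ into those counted by $f(G-e)$. Call an ordered partition \emph{admissible} for a graph if it satisfies the two conditions in the definition of $f$. The two graphs have the same vertices, and their admissibility conditions differ in two ways. Deleting $e$ removes one Hom-type constraint between blocks, which only helps. But it may disconnect a block, which hurts. So the map will be the identity except when $e$ is a bridge inside a block, in which case that block is split.

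Fix an admissible ordered partition $(C_1,\ldots,C_p)$ of $G$. We distinguish three cases.

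\emph{Case 1.} The vertices $x$ and $y$ lie in different blocks. Then $e$ points from a later block to an earlier one, so the block of $x$ comes after the block of $y$. The same partition is admissible for $G-e$: blocks remain connected because $e$ is not inside a block, and the inter-block arrows form a subset of the old ones. Send the partition to itself.

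\emph{Case 2.} The vertices $x,y$ lie in the same block $C_j$, and $C_j$ induces a connected graph in $G-e$. Again send the partition to itself. It stays admissible because the inter-block arrows are unchanged.

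\emph{Case 3.} The vertices $x,y$ lie in $C_j$, and $e$ is a bridge of the graph induced on $C_j$. Then $C_j$ minus $e$ has exactly two connected components, $X\ni x$ and $Y\ni y$. Replace $C_j$ by the two consecutive blocks $X,Y$, in this order, at the position of $C_j$. This partition is admissible for $G-e$. Both new blocks are connected in $G-e$. There is no arrow of $G-e$ between $X$ and $Y$, since any such arrow would connect them in $C_j$ minus $e$. Arrows between $X\cup Y$ and the other blocks are unchanged, so they still point from later to earlier.

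To prove injectivity, I will read off the case from the image. In Case 2, $x$ and $y$ share a block of the image. In Case 1, the block of $x$ comes strictly after that of $y$. In Case 3, the block of $x$ comes immediately before that of $y$. These are disjoint properties, so the three parts of the image are disjoint. Within Cases 1 and 2 the map is the identity. Within Case 3 the original partition is recovered by merging the block of $x$ with the immediately following block of $y$. Hence the map is injective and $f(G)\le f(G-e)$.

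The only real subtlety is choosing the order $X$ before $Y$ in Case 3. This choice makes Case 3 images disjoint from Case 1 images, where $e$ forces the opposite order of $x$ and $y$. With the reverse convention the two cases could collide. I will also check that acyclicity is not needed beyond being part of the hypotheses, and that parallel copies of $e$ cause no trouble: if another arrow $x\to y$ exists, then $e$ is not a bridge, and we are in Case 2.
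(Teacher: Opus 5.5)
Your proposal is correct and is essentially the paper's proof. The paper also keeps the partition unchanged unless deleting $e$ disconnects the block containing $x,y$, in which case it replaces that block by the component of $x$ followed immediately by the component of $y$; injectivity follows because unchanged images have the block of $x$ at or after that of $y$, while split images have it strictly before (your Cases~1 and~2 together form the paper's single ``unchanged'' case).
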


\begin{proof}
Keep an ordered partition unchanged if all its blocks remain connected
after deleting $e$. Otherwise, the block containing $x,y$ splits into
exactly two connected components $C_x,C_y$. Replace that block by
$C_x,C_y$, consecutively in this order. No remaining arrow joins these
components, and the order relative to every other block is unchanged.
Thus the new partition is counted by $f(G-e)$.

This map is injective. In an unchanged image, the block containing $x$
is at or after the block containing $y$. In a split image it is
strictly before that block. Hence the two kinds of image are disjoint,
and a split image is recovered by merging the consecutive blocks
containing $x,y$.
\end{proof}

\begin{proposition}\label{prop:nonsemisimple}
Let $A$ be $\tau$-tilting finite over an algebraically closed field,
and let $M$ have length $m$.
\begin{enumerate}
\item\label{item:nonsemisimple-bound}
If $M$ is nonsemisimple, then $m\geq2$ and
\[
 \bcount_A(M)\leq\frac{m!}{2}+(m-1)!.
\]
For nonsemisimple multiplicity-free $M$, this inequality holds without
assuming that $A$ is $\tau$-tilting finite.
For every $m\geq2$, the bound is attained over a representation-finite
algebra.
\item\label{item:factorial-equality}
If $m\geq3$, then $\bcount_A(M)=m!$ if and only if $M$ is a direct
sum of pairwise nonisomorphic simple modules.
\end{enumerate}
\end{proposition}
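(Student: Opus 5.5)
The proof reduces to the multiplicity-free case, where brick chain filtrations are encoded by the ordered partitions counted by $f(G)$. Lemma~\ref{lem:arrow-deletion} then bounds $f(G)$ by its value on a graph with a single arrow.

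\emph{Reduction.} A nonsemisimple module has a nonsplit extension among its composition factors, so $m\geq2$. If some simple multiplicity exceeds one, some $d_i\geq2$. Theorem~\ref{thm:multinomial} then gives $\bcount_A(M)\leq m!/2$, which is below the claimed bound. So assume $M$ is multiplicity-free. By Morita equivalence, which preserves $\bcount_A$ as in the proof of Corollary~\ref{cor:uniform-length}, take $A=kQ/I$ basic with $I$ admissible. Every $Me_i$ then has dimension at most one. Let $G$ be the acyclic graph of nontrivially acting arrows described before Lemma~\ref{lem:arrow-deletion}. No $\tau$-tilting finiteness is used in the steps below, matching the statement.

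\emph{Filtrations and $f(G)$.} I will construct an injection from brick chain filtrations of $M$ into the ordered partitions counted by $f(G)$.
\begin{itemize}
\item Submodules of $M$ are determined by their supports. A support $W$ gives a submodule exactly when every arrow $x\to y$ of $G$ with $x\in W$ has $y\in W$.
\item A filtration therefore gives an ordered partition of the vertices into the successive differences of supports. An arrow $x\to y$ with $x$ in block $i$ must have $y$ in a block $j\leq i$. So arrows between distinct blocks point from later to earlier blocks.
\item Each factor $M_i/M_{i-1}$ is multiplicity-free and lies in $\Filt B_i$. Hence it is a single copy of $B_i$, and so it is a brick.
\item A thin module with disconnected support splits as a direct sum and is not a brick. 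Each block therefore induces a connected subgraph.
\end{itemize}
The filtration is determined by its partition, so $\bcount_A(M)\leq f(G)$.

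\emph{Counting.} Since $M$ is nonsemisimple, $G$ has at least one arrow $e\colon x\to y$. Deleting every other arrow and applying Lemma~\ref{lem:arrow-deletion} repeatedly gives $f(G)\leq f(G_1)$, where $G_1$ has $m$ vertices and only the arrow $e$. In $G_1$ every block except possibly $\{x,y\}$ is a singleton. This leaves two cases.
\begin{itemize}
\item $\{x,y\}$ is a block. This gives $(m-1)$ blocks in any order, that is $(m-1)!$ partitions.
\item $x,y$ are singletons with $\{y\}$ before $\{x\}$. This is half of all orderings of singletons, that is $m!/2$ partitions.
\end{itemize}
Thus $\bcount_A(M)\leq m!/2+(m-1)!$.

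\emph{Sharpness.} Take $Q$ with $m$ vertices and a single arrow $x\to y$; then $kQ$ is representation-finite. Let $M=X\oplus\bigoplus_{z\ne x,y}S_z$, where $X$ is the indecomposable of length two with top $S_x$ and socle $S_y$. I will check that every ordered partition counted by $f(G_1)$ is realized by a brick chain filtration. The only nonzero Homs between distinct modules among the possible factors $S_x,S_y,X,S_z$ are $S_y\to X$ and $X\to S_x$. The modules $X$ and $S_x$ never occur in the same partition, and neither do $S_y$ and $X$. So Hom-vanishing holds for every ordering. When $S_y$ and $S_x$ occur separately, $S_y$ precedes $S_x$, and $\Hom_A(S_y,S_x)=0$.

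\emph{Part (2).} If $M$ is a direct sum of pairwise nonisomorphic simple modules, Example~\ref{ex:semisimple} gives $\bcount_A(M)=m!$. Otherwise there are two cases.
\begin{itemize}
\item $M$ is nonsemisimple. Part (1) gives $\bcount_A(M)\leq m!/2+(m-1)!$, which is strictly below $m!$ when $m\geq3$.
\item $M$ is semisimple with a repeated simple. Theorem~\ref{thm:multinomial} gives $\bcount_A(M)\leq m!/2<m!$.
\end{itemize}

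The step I expect to need most care is the encoding of filtrations by ordered partitions. Two facts must be checked there: a factor in $\Filt B$ of a multiplicity-free module is a single brick, and thin bricks have connected support. The arrow-deletion estimate itself is already supplied by Lemma~\ref{lem:arrow-deletion}.
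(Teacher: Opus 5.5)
Your proposal is correct and follows essentially the same route as the paper. You dispose of repeated multiplicities with Theorem~\ref{thm:multinomial}, encode filtrations of multiplicity-free modules by the ordered partitions counted by $f(G)$, and reduce to a single arrow with Lemma~\ref{lem:arrow-deletion}.

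The differences are minor. You prove only the injection into $f(G)$ and then check by hand that your sharpness example realizes every partition; that example is a single arrow plus isolated vertices rather than the paper's linearly oriented $A_m$. The paper instead shows the correspondence with $f(G)$ is bijective, so sharpness can be read off directly.
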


\begin{proof}
(1) If some simple composition multiplicity is at least two,
Theorem~\ref{thm:multinomial} gives $\bcount_A(M)\leq m!/2$.
We may therefore assume that $M$ is multiplicity-free.
Use the bound quiver presentation and the acyclic graph $G$ attached
to $M$ above; Morita equivalence preserves the filtration count.

A submodule support is closed under the arrows of $G$. Thus a
filtration gives an ordered partition whose initial unions are closed
under arrows. This is equivalent to requiring every arrow between
blocks to point from a later block to an earlier block. Conversely,
such a partition determines a unique chain of submodules, by taking
these initial unions as supports.

A factor of a filtration is still multiplicity-free, so it belongs to $\Filt B$ for
a brick $B$ only if it is itself isomorphic to $B$. The nonzero arrow
maps of a factor are precisely the arrows of $G$ with both endpoints
in its block. Thus its graph is the subgraph of $G$ induced on that
block. Its endomorphisms are scalars constant on each connected
component, so the factor is a brick exactly when this subgraph is
connected. Different factors have disjoint simple supports and hence
no nonzero morphisms between them. It follows that brick chain
filtrations correspond exactly to the ordered partitions counted by
$f(G)$.

Since $M$ is nonsemisimple, $G$ contains an arrow. Repeatedly apply
Lemma~\ref{lem:arrow-deletion} until just one arrow $x\to y$ remains.
Writing $G'$ for the resulting graph, a partition counted by $f(G')$
either has all blocks singletons, with $y$ before $x$, or has the block
$\{x,y\}$ and all
other blocks singletons. These possibilities contribute $m!/2$ and
$(m-1)!$, respectively, proving the bound.

For sharpness, take the linearly oriented quiver of Dynkin type $A_m$ and the
representation with dimension one at every vertex, one arrow equal to
the identity and all other arrows zero. Its graph $G$ has just one
arrow, so its filtration count is $m!/2+(m-1)!$.

(2) If $M$ is nonsemisimple, (1) gives $\bcount_A(M)<m!$ for $m\geq3$.
If $M$ is semisimple and has a repeated simple factor,
Theorem~\ref{thm:multinomial} again gives $\bcount_A(M)<m!$.
Conversely, a direct sum of $m$ distinct
simples has $m!$ filtrations by Example~\ref{ex:semisimple}.
\end{proof}

\section{Counting brick chain filtrations of Kronecker modules}\label{sec:kronecker}

We construct bricks over generalized Kronecker algebras whose numbers
of brick chain filtrations exceed the factorial of their composition
length, answering Question~\ref{ques:ringel}(2) affirmatively.
We obtain a module of composition length $244$, as well as families
over the three-arrow algebra
whose filtration counts grow exponentially in the square of their
composition length. To do this, we count submodules $U\subseteq M$
for which $U$ and $M/U$ are bricks and $\Hom_A(U,M/U)=0$.
Each gives a two-step brick chain filtration. Schubert calculus computes
the number of these submodules; the hook-length formula then gives
lower bounds for the number of filtrations.

Throughout this section, $k$ is algebraically closed. Let $Q_q$ have
vertices $1,2$ and arrows $1\to2$, numbered $1,\ldots,q$, where $q\geq3$.
Put $A=k Q_q$. We identify a representation with the right $A$-module
whose vertex spaces are $Me_i$ and whose arrow maps are right
multiplication by the arrows. For representations $X,Y$ of a quiver,
a morphism $\phi:X\to Y$ consists of maps $\phi_i:X_i\to Y_i$ with
$Y(c)\phi_i=\phi_jX(c)$ for every arrow $c:i\to j$.
The two simple $k Q_q$-modules are one-dimensional, so a module of
dimension vector $(x,y)$ has composition length $x+y$.
A property holds for a \emph{general representation} of a fixed
dimension vector if it holds on a nonempty Zariski-open subset of the
corresponding representation variety.

We next choose the dimension vectors used in the count. For an acyclic
quiver $Q$, its Euler form is
\begin{equation}\label{eq:euler-form}
 \langle x,y\rangle_Q
   =\sum_{i\in Q_0}x_i y_i-\sum_{c:i\to j}x_i y_j.
\end{equation}
For representations $X,Y$ of dimension vectors $x,y$, respectively,
the hereditary algebra $kQ$ satisfies \cite[Section~2]{DSW}
\begin{equation}\label{eq:euler-hom-ext}
 \langle x,y\rangle_Q
   =\dim_k\Hom_{kQ}(X,Y)-\dim_k\Ext^1_{kQ}(X,Y).
\end{equation}
For $Q_q$, the Euler form is
\[
 \langle x,y\rangle_{Q_q}=x_1y_1+x_2y_2-qx_1y_2.
\]
For positive integers $a,b$, set
\begin{equation}\label{eq:dimensions}
 \beta=(a,a),\qquad \gamma=((q-1)b,b),\qquad
 \alpha=\beta+\gamma=(a+(q-1)b,a+b).
\end{equation}
The choice in \eqref{eq:dimensions} satisfies
\[
 \langle\beta,\gamma\rangle_{Q_q}
   =a(q-1)b+ab-qab=0.
\]
Here a subrepresentation of dimension $\beta$ has quotient dimension
$\gamma$. Its vertex subspaces have $\sum_i\beta_i\gamma_i$ parameters,
and the arrows impose $\sum_{c:i\to j}\beta_i\gamma_j$ equations.
Their difference is $\langle\beta,\gamma\rangle$. Its vanishing is
the condition in
Proposition~\ref{prop:genericity} that makes the submodule count finite
for a general representation of dimension $\alpha$. The proposition
also gives $\Hom_A(U,M/U)=0$ for all the submodules being counted.

\subsection{Submodules of general representations with brick factors}

We first show that, for a general representation of dimension $\alpha$,
every submodule of dimension $\beta$ gives a two-step brick chain
filtration with brick factors. We use a result on subrepresentations of
general representations of acyclic quivers, then verify its hypotheses
for the dimension vectors in \eqref{eq:dimensions}.

For a finite acyclic quiver $Q$ and a representation $M$, write
$\Gr_\beta(M)$ for the \emph{quiver Grassmannian}. To keep track of
intersection multiplicities, we now equip it with the closed subscheme
structure in $\prod_i\Gr(\beta_i,M_i)$
defined by the subrepresentation equations $M(c)U_i\subseteq U_j$ for
each arrow $c:i\to j$. On affine charts these are polynomial equations,
obtained by setting the induced maps $U_i\to M_j/U_j$ equal to zero.
Its $k$-points are the subrepresentations of $M$ of dimension vector
$\beta$. For $A=kQ$, this set is the part of
$\Gr_A(\sum_i\beta_i,M)$ with vertex dimensions $\beta_i$.
For a finite scheme $Z$ over the
algebraically closed field $k$, its degree is
$\deg Z=\sum_{z\in Z}\dim_k\mathcal O_{Z,z}$.
If $Z$ is reduced, every local ring is $k$, so its degree equals its
number of points.

The following proposition states the result of Derksen, Schofield and
Weyman on subrepresentations of a general representation in the form
used here. In particular, it defines the integer $N(\beta,\alpha)$.

\begin{proposition}[{\cite[p.~139, proof of Corollary~12 and
Proposition~13]{DSW}; \cite[proof of Corollary~3]{CB}}]\label{prop:genericity}
Let $Q$ be a finite acyclic quiver, and let
$\alpha=\beta+\gamma$ with $\langle\beta,\gamma\rangle_Q=0$.
There is a nonempty Zariski-open subset $\mathcal O\subseteq\Rep(Q,\alpha)$
with the following properties.
\begin{enumerate}
\item\label{item:generic-count}
For every $M\in\mathcal O$, the quiver Grassmannian $\Gr_\beta(M)$
is finite and reduced, with constant cardinality $N(\beta,\alpha)$.
\item\label{item:generic-hom}
For every $M\in\mathcal O$ and every $U\in\Gr_\beta(M)$,
one has $\Hom_{kQ}(U,M/U)=0$.
\item\label{item:generic-open}
Given nonempty open subsets
$\mathcal U_\beta\subseteq\Rep(Q,\beta)$ and
$\mathcal U_\gamma\subseteq\Rep(Q,\gamma)$ invariant under change of
bases, $\mathcal O$ can be chosen so that, for every $M\in\mathcal O$
and every $U\in\Gr_\beta(M)$, the submodule $U$ and its quotient
$M/U$ belong to $\mathcal U_\beta$ and $\mathcal U_\gamma$,
respectively.
\end{enumerate}
\end{proposition}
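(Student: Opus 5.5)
The proof assembles the Derksen--Schofield--Weyman and Crawley-Boevey results, rather than proving anything from scratch. The strategy is to work on the incidence variety
\[
 Z=\{(M,U)\in\Rep(Q,\alpha)\times\prod_i\Gr(\beta_i,k^{\alpha_i})\mid U\text{ a subrepresentation of }M\}
\]
and to study its projection $\pi:Z\to\Rep(Q,\alpha)$. We first describe $Z$ through the other projection, to $\prod_i\Gr(\beta_i,k^{\alpha_i})$. That projection is a vector bundle whose fibre consists of the arrow maps preserving a fixed subspace. Hence $Z$ is smooth and irreducible, of dimension
\[
 \dim\Rep(Q,\alpha)+\textstyle\sum_i\beta_i\gamma_i-\sum_{c:i\to j}\beta_i\gamma_j=\dim\Rep(Q,\alpha)+\langle\beta,\gamma\rangle_Q=\dim\Rep(Q,\alpha).
 \]

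Next we look at the differential of $\pi$ at a point $(M,U)$. The tangent space to the fibre $\Gr_\beta(M)$ at $U$ is $\Hom_{kQ}(U,M/U)$, exactly as in the Remark after Theorem~\ref{thm:solid-finite}. The cokernel of $d\pi$ injects into $\Ext^1_{kQ}(U,M/U)$. By \eqref{eq:euler-hom-ext} and $\langle\beta,\gamma\rangle_Q=0$, these two spaces have equal dimension.

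The key step, and the main obstacle, is to show that $\pi$ is dominant, equivalently that $\Ext^1(U,M/U)=0$ for a general pair. For this I would invoke Schofield's theorem as used in \cite{DSW}. The condition $\langle\beta,\gamma\rangle=0$ together with $\beta\hookrightarrow\alpha$ generically forces $\operatorname{ext}(\beta,\gamma)=0$. Concretely, \cite[Corollary~12]{DSW} gives $\hom(\beta,\gamma)=\operatorname{ext}(\beta,\gamma)=0$, and since generic Hom vanishes, so does generic Ext. Crawley-Boevey's argument \cite{CB} shows that the general $M$ of dimension $\alpha$ arises as an extension of general $U\in\Rep(Q,\beta)$ by general $V\in\Rep(Q,\gamma)$. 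This is where we cite rather than reprove.

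Once dominance is known, generic smoothness (in characteristic zero), or the tangent-space computation in general, shows the following. Over a dense open set of $M$, every point $U$ of the fibre has $\Hom(U,M/U)=0$. This gives~\ref{item:generic-hom}, and also that the fibre is reduced, of dimension $0$. Since $\pi$ is proper (the Grassmannian factor is projective), we shrink to an open set where $\pi$ is finite and étale. There the number of points is constant, giving~\ref{item:generic-count} and the integer $N(\beta,\alpha)$.

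For~\ref{item:generic-open}, consider the morphisms $Z\to\Rep(Q,\beta)/\!\!\sim$ and $Z\to\Rep(Q,\gamma)/\!\!\sim$, which restrict to $U$ and pass to $M/U$, taken locally after choosing bases. Basis-invariance makes the preimages of $\mathcal U_\beta$ and $\mathcal U_\gamma$ well defined and open in $Z$; they are nonempty because $Z$ fibres over pairs in these sets. Let $W$ be the complement of their intersection, a proper closed subset of the irreducible $Z$. Since $\pi$ is proper and $\dim W<\dim Z=\dim\Rep(Q,\alpha)$, the image $\pi(W)$ is a proper closed subset, and we remove it from $\mathcal O$.
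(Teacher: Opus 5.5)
\textbf{Gap: the claim that $\pi$ is dominant.} Most of your outline is the standard argument behind the results the paper cites; the paper itself gives no proof beyond the references to \cite{DSW} and \cite{CB}. The following parts are sound: the dimension count for $Z$, the identification of $\ker d\pi$ with $\Hom_{kQ}(U,M/U)$, the use of properness to discard the image of a closed subset of $Z$ of smaller dimension, and your argument for (3). The gap is the step you call the key one. Under the stated hypotheses $\pi$ need not be dominant, and the statement deliberately allows $N(\beta,\alpha)=0$.

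For example, take $Q\colon 1\to2$ with $\beta=(2,1)$ and $\gamma=(1,2)$. Then $\langle\beta,\gamma\rangle_Q=2+2-4=0$. But a general $M$ of dimension $(3,3)$ has invertible arrow map, so the image of a $2$-dimensional $U_1$ cannot lie in a line $U_2$. Thus $\Gr_\beta(M)=\emptyset$, and here $\operatorname{hom}(\beta,\gamma)=\operatorname{ext}(\beta,\gamma)=1$. Your justification is also circular. It assumes that $\beta\hookrightarrow\alpha$ generically, which is dominance itself. The vanishing $\operatorname{hom}(\beta,\gamma)=\operatorname{ext}(\beta,\gamma)=0$ that you quote is equivalent to that assumption, not a consequence of $\langle\beta,\gamma\rangle_Q=0$.

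\textbf{Repair by a case distinction.}

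\emph{Case 1: $\pi$ is not dominant.} Then $\pi(Z)$ is a proper closed subset, because $\pi$ is proper. The set $\mathcal O=\Rep(Q,\alpha)\setminus\pi(Z)$ satisfies (1)--(3) vacuously with $N(\beta,\alpha)=0$. This case matters for the paper, where $N(\beta,\alpha)=K(q,a,b)>0$ is deduced afterwards from the Schubert calculus rather than assumed.

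\emph{Case 2: $\pi$ is dominant.} What you need is a point of $Z$ with $\Hom_{kQ}(U,M/U)=0$. Generic smoothness supplies it in characteristic zero. In positive characteristic, which the paper allows, dominance alone does not: a dominant morphism of smooth varieties of equal dimension can be inseparable. Your remark about ``the tangent-space computation in general'' hides this point. The needed input is the nontrivial direction of Schofield's criterion: if $\beta\hookrightarrow\alpha$ generically, then $\operatorname{ext}(\beta,\gamma)=0$. This is the characteristic-free input from \cite{CB}. Choose $U,V$ with $\Ext^1_{kQ}(U,V)=0$; since $\langle\beta,\gamma\rangle_Q=0$, also $\Hom_{kQ}(U,V)=0$. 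Taking $M=U\oplus V$ shows that the locus $Z^\circ\subseteq Z$ where $\Hom_{kQ}(U,M/U)=0$ is nonempty; it is open by semicontinuity. On $Z^\circ$ the map $\pi$ is \'etale. Removing $\pi(Z\setminus Z^\circ)$ then gives (1) and (2) as you describe.
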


The finiteness and constant cardinality in part~\ref{item:generic-count}
are established in the paragraph following equation~(5) in
\cite[p.~139]{DSW}. The proof of \cite[Corollary~3]{CB} shows that the
quiver Grassmannian is generically reduced for a general representation.
Since it is zero-dimensional here, it is reduced.
Part~\ref{item:generic-hom} is established in
the proof of \cite[Corollary~12]{DSW}, and
part~\ref{item:generic-open} is \cite[Proposition~13]{DSW}.

The value $N(\beta,\alpha)=0$ is allowed in
Proposition~\ref{prop:genericity}.

We will impose the condition that the submodules and quotients are
bricks. Since $k$ is algebraically closed, a representation is a brick
precisely when its endomorphism space has dimension one. In each
representation variety
this locus is open: endomorphisms are the kernel of a matrix depending
polynomially on the arrow maps, so the condition $\dim\End\leq1$ is open.
For a nonzero representation the identity endomorphism gives
$\dim\End\geq1$, and hence this condition is equivalent to $\dim\End=1$.
It is invariant under changes of bases because it depends only on
the isomorphism class. It remains to show that the brick loci for
$\beta,\gamma$ and $\alpha$ are nonempty.

\begin{lemma}\label{lem:brick-pairs}
For the vectors in \eqref{eq:dimensions}, there are bricks $U,V$ with
dimension vectors $\beta,\gamma$ satisfying the following properties.
\begin{enumerate}
\item\label{item:brick-pair-hom} One has
\[
 \Hom_{k Q_q}(U,V)=\Hom_{k Q_q}(V,U)=0.
\]
\item\label{item:brick-pair-ext} One has $\Ext^1_{k Q_q}(V,U)\ne0$, and every nonsplit exact sequence
\[
 0\longrightarrow U\longrightarrow M\longrightarrow V\longrightarrow0
\]
has a brick $M$ of dimension vector $\alpha$ as its middle term.
\end{enumerate}
In particular, the brick locus in $\Rep(Q_q,\alpha)$ is nonempty.
\end{lemma}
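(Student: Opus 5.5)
We construct $U$ and $V$ explicitly and then verify the conditions by linear algebra. The plan is to make $V$ a general representation of the real Schur root $\gamma=((q-1)b,b)$ and $U$ a regular brick of dimension $(a,a)$. The dimension vector $\gamma$ is a real root when $b=1$, since $\langle\gamma,\gamma\rangle=(q-1)^2+1-q(q-1)=2-q$. Fortunately, $\langle\gamma,\gamma\rangle=b^2(2-q)<0$ places $\gamma$ in the imaginary cone, where general representations are bricks (Kac, Schofield). Likewise $\beta=(a,a)$ satisfies $\langle\beta,\beta\rangle=a^2(2-q)<0$, so a general representation of dimension $\beta$ is a brick. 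Concretely, we may instead take $U$ to be a direct construction: $U_1=U_2=k^a$, with arrow $1$ acting by the identity, arrow $2$ by a single nilpotent Jordan block $J$, and arrow $3$ by a generic diagonal or companion matrix $D$ with no common invariant subspace with $J$. Endomorphisms are pairs $(\phi_1,\phi_2)$ with $\phi_2=\phi_1$ commuting with $J$ and $D$, hence scalars. For $V$ we use Schofield's theorem that a general representation of a Schur root is a brick, noting that $\gamma$ is a Schur root because $\gamma$ lies in the fundamental region of $Q_q$: one checks $\langle\gamma,e_i\rangle$ and $\langle e_i,\gamma\rangle$ appropriately, or uses that $(q-1)b\le qb$ and $b\le (q-1)b\cdot q$.

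For (1), we take $U,V$ general in their representation varieties. By \eqref{eq:euler-hom-ext}, $\langle\beta,\gamma\rangle=0$ and $\langle\gamma,\beta\rangle=a(q-1)b+ab-qab\cdot(\ldots)$. Computing gives $\langle\gamma,\beta\rangle=(q-1)ab+ab-q(q-1)ab=ab(q-q(q-1))<0$, which forces $\Ext^1(V,U)\neq0$. Vanishing of $\Hom(U,V)$ for general $U,V$ follows from Schofield's criterion: $\Hom(U,V)=0$ generically iff $\langle\beta,\gamma\rangle\ge0$ and $\Ext$ vanishes generically, which reduces to $\beta$ not having a subvector $\beta'$ with $\langle\beta',\gamma\rangle<0$. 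Since subdimension vectors of a general $U$ of dimension $(a,a)$ satisfy $\beta'_2\ge\beta'_1$ (general maps $k^a\to k^a$ are injective), we get $\langle\beta',\gamma\rangle=(q-1)b\beta'_1+b\beta'_2-qb\beta'_1=b(\beta'_2-\beta'_1)\ge0$. For $\Hom(V,U)=0$, we use the cleaner argument that the image of any map $V\to U$ is a quotient of $V$ and a subrepresentation of $U$. A general $V$ of dimension $\gamma$ has all quotients $(x,y)$ with $qx$ large relative to $y$, while subrepresentations of $U$ satisfy $y\ge x$ with some slope inequality. These two constraints are incompatible unless the image is zero; this can be checked with the slope (stability) function $\theta=(1,-1)$ weighted suitably. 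I expect this slope bookkeeping to be the main obstacle. The fallback is to use King stability: both $U$ and $V$ are stable for the weight $\theta(x,y)=x\cdot(\cdot)$ orthogonal to $\alpha$, in which case Hom vanishing between nonisomorphic stables of the same slope is automatic. I would attempt to realise $\beta,\gamma$ as stable of the same $\theta$-slope for $\theta=\langle\alpha,-\rangle-\langle-,\alpha\rangle$.

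For (2), take a nonsplit sequence $0\to U\to M\to V\to0$ and $f\in\End(M)$. The composite $U\to M\to M\to V$ lies in $\Hom(U,V)=0$, so $f$ preserves $U$ and induces scalars $\lambda$ on $U$ and $\mu$ on $V$. Subtracting $\lambda\cdot\mathrm{id}$, we get $g=f-\lambda$ killing $U$, which factors through $V\to M$. If $\mu\ne\lambda$, then $g$ composed to $V$ is a nonzero multiple of the identity, so $V\to M$ splits the sequence, a contradiction. Hence $\mu=\lambda$, and $g$ induces a map $V\to U$, which is zero. Therefore $f=\lambda$ and $M$ is a brick of dimension $\alpha$, giving nonemptiness of the brick locus.
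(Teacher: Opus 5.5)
\textbf{Main gap: $\Hom_{kQ_q}(V,U)=0$ is not proved.} You leave the ``slope bookkeeping'' open, and your fallback cannot work. Since $\beta$ and $\gamma$ are linearly independent, no nonzero weight vanishes on both. Indeed, for $\theta=\langle\alpha,-\rangle-\langle-,\alpha\rangle$ one gets $\theta(\beta)=q(2-q)ab\neq0$, so there are no stable objects of equal slope to compare. This vanishing is not optional: it is half of (1), and it is exactly what kills the induced map $V\to U$ at the end of your argument for (2).

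The missing observation is that the slopes are \emph{different} and correctly ordered. Suppose the first $q-1$ arrows of $V$ assemble to an isomorphism $V_1\to V_2^{\oplus(q-1)}$; this holds for general $V$, e.g.\ for coordinate projections. Then every subrepresentation $(x_1,x_2)$ of $V$ has $x_1\le(q-1)x_2$, so every quotient $y$ of $V$ has $y_1\ge(q-1)y_2$. On the other side, every subrepresentation $y$ of $U$ has $y_1\le y_2$, because the first arrow of $U$ is injective. The image of a map $V\to U$ satisfies both inequalities, so $(q-2)y_2\le0$, and $q\ge3$ forces $y=0$. The paper instead computes directly with its explicit $U$ and $V$, using the arrows $I_a$ and $D_a$ of $U$.

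\textbf{Second gap: you have not shown that $V$ is a brick.} Your justification is incorrect on several counts. First, $\gamma$ is not in the fundamental region: the symmetrized form gives $\langle\gamma,e_1\rangle+\langle e_1,\gamma\rangle=2(q-1)b-qb=(q-2)b>0$ for $q\ge3$, and the inequalities you list are not the defining ones. Second, $\gamma$ is not a real root for $b=1$, since $\langle\gamma,\gamma\rangle=2-q\neq1$. Third, ``a general representation of a Schur root is a brick'' is the definition of a Schur root, so it cannot show that $\gamma$ is one.

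What you need is one explicit brick of dimension $\gamma$; openness then makes a general $V$ a brick. The paper's construction does this:
\begin{enumerate}
\item take the first $q-1$ arrows to be the coordinate projections $(k^b)^{q-1}\to k^b$, which forces every endomorphism to have the form $(\operatorname{diag}(Y,\ldots,Y),Y)$;
\item take the last arrow to be $[D_b+tI_b\ \ E_b\ \ 0\ \cdots\ 0]$, which forces $Y$ to commute with $D_b$ and $E_b$, hence to be scalar.
\end{enumerate}

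The rest of your proposal is sound, and parts of it differ from the paper. Your $U$ is a brick. Your proof of $\Hom_{kQ_q}(U,V)=0$ uses Schofield's criterion, with $\langle\beta',\gamma\rangle=b(\beta'_2-\beta'_1)\ge0$ for subrepresentations of a general $U$; the paper instead chooses $t$ outside the roots of $\det(tI+L)$. Your splitting argument in (2) replaces the paper's identity $\lambda\varepsilon=\mu\varepsilon$.
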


\begin{proof}
Let $D_a$ be a diagonal matrix with pairwise distinct entries, and let
$E_a$ be the $a\times a$ matrix all of whose entries are one. Define $U$
by the arrow maps
\[
 I_a,\ D_a,\ E_a,\ 0,\ldots,0.
\]
An endomorphism has equal source and target components because of the
first arrow. Commuting with $D_a$ forces this common component to be
diagonal, and commuting with $E_a$ forces it to be scalar. Thus $U$ is
a brick.

Define $V$ on the spaces $(k^b)^{q-1}$ and $k^b$. Its first $q-1$
arrows are the coordinate projections $P_1,\ldots,P_{q-1}$, and its
last arrow is
\[
 [D_b+tI_b\quad E_b\quad 0\quad\cdots\quad0],
\]
where $D_b,E_b$ are chosen in the same way and $t\in k$ will be fixed
below. If $(X,Y)$ is an endomorphism of $V$, our morphism convention
gives $P_jX=YP_j$ for the projection arrows, hence
$X=\operatorname{diag}(Y,\ldots,Y)$. The last arrow then gives
$YD_b=D_bY$ and $YE_b=E_bY$, so $Y$ is scalar. Hence $V$ is a brick
for every $t$.

(1) For a morphism $V\to U$, let $X,Y$ be its source and target components.
The first arrow gives $X=[Y\quad0\quad\cdots\quad0]$.
The second arrow gives $D_aX=YP_2$. Its second block gives $Y=0$, and then
$X=0$. Thus $\Hom(V,U)=0$.
For a morphism $U\to V$, write
$X:k^a\to(k^b)^{q-1}$ and $Y:k^a\to k^b$ for its source and target
components, and let $G_j$ be the $j$-th arrow matrix of $U$. The first
$q-1$ arrow equations are
$P_jX=YG_j$. Hence the block rows of $X$ are
$YG_1,\ldots,YG_{q-1}$.
The last equation is
\[
 (D_b+tI_b)Y+E_bYD_a=YG_q.
\]
Thus it is $tY+L(Y)=0$, where
\[
 L(Y)=D_bY+E_bYD_a-YG_q,\qquad
 G_q=\begin{cases}E_a&q=3,\\0&q\geq4.\end{cases}
\]
This is an endomorphism of the $ab$-dimensional space
$\Hom_{k}(k^a,k^b)$, independent of $t$. The polynomial
$\det(tI+L)$ is monic. Since $k$ is infinite, choose $t$ outside
its finite set of roots.
Then $Y=0$, and the block equations give $X=0$, so
$\Hom(U,V)=0$ as well.

(2) Apply \eqref{eq:euler-hom-ext} to $V,U$.
Since $\Hom_{kQ_q}(V,U)=0$ by (1), it gives
\[
 \dim\Ext^1_{k Q_q}(V,U)
   =-\langle\gamma,\beta\rangle_{Q_q}=q(q-2)ab>0.
\]
The last inequality uses $q\geq3$ and $a,b\geq1$.
Choose a nonsplit extension $0\to U\xrightarrow{i}M\xrightarrow{p}V\to0$.
Every endomorphism $f$ of $M$ preserves $U$, because $pfi=0$ by
$\Hom(U,V)=0$. It therefore induces a commutative diagram with exact rows
\[
\begin{tikzcd}
 0 \arrow[r] & U \arrow[r,"i"] \arrow[d,"u"']
 & M \arrow[r,"p"] \arrow[d,"f"]
 & V \arrow[r] \arrow[d,"v"] & 0 \\
 0 \arrow[r] & U \arrow[r,"i"']
 & M \arrow[r,"p"'] & V \arrow[r] & 0.
\end{tikzcd}
\]
Let
\[
 \rho:\End(M)\longrightarrow\End(U)\times\End(V),
 \qquad f\longmapsto(u,v).
\]
The map $\rho$ is injective: an element of its kernel factors through a
map $V\to U$. Indeed, $fi=0$ implies that $f$ factors through
$p:M\twoheadrightarrow V$, say $f=hp$ for a map $h:V\to M$. The
equality $pf=0$ gives $ph=0$ because $p$ is surjective, so $h$ factors
through $i:U\hookrightarrow M$. The resulting map $V\to U$ is zero because
$\Hom_{k Q_q}(V,U)=0$.
Write $u=\lambda\,1_U$ and $v=\mu\,1_V$. If
$\varepsilon\in\Ext^1(V,U)$ is the extension class, the commutative
diagram says that its pushout along $u$ and its pullback along $v$
coincide. Hence
\[
 \lambda\varepsilon=u_*\varepsilon=v^*\varepsilon
   =\mu\varepsilon.
\]
The extension is nonsplit, so $\varepsilon\ne0$ and $\lambda=\mu$.
The pair $(u,v)$ is therefore the image under $\rho$ of the scalar
endomorphism $\lambda\,1_M$. Injectivity of $\rho$ gives
$f=\lambda\,1_M$, and hence $\End(M)=k$.
\end{proof}

To distinguish the types of the filtrations we will count, we use a
general fact about solid submodules: a solid submodule has no distinct
isomorphic copy inside the same ambient module.

\begin{lemma}\label{lem:unique-submodule}
Let $A$ be a finite-dimensional algebra and $M$ an $A$-module.
If $U\in\Sol_A(M)$ and $U'\subseteq M$ is isomorphic to $U$, then
$U'=U$.
\end{lemma}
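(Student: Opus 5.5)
Let $\phi:U\to U'$ be an isomorphism and consider the composite
\[
 U\xrightarrow{\ \phi\ }U'\hookrightarrow M\twoheadrightarrow M/U .
\]
This is a morphism $U\to M/U$, so it vanishes because $U$ is solid, that is, $\Hom_A(U,M/U)=0$. Since $\phi$ is surjective onto $U'$, the vanishing says that $U'$ maps to zero in $M/U$. Hence $U'\subseteq U$.

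To finish, we compare sizes: $\dim_k U'=\dim_k U$, because the two modules are isomorphic. A subspace of a finite-dimensional space with the same dimension is the whole space, so $U'=U$. (Composition length would work equally well in place of $k$-dimension.)

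This is essentially the argument in part~(1) of Theorem~\ref{thm:solid-finite}, applied to a specific submodule instead of a Zariski neighbourhood. No step is a genuine obstacle; the only point needing care is that $\Hom_A(U,M/U)=0$ is applied to a map with source $U$, which is achieved by composing with the isomorphism $\phi$.
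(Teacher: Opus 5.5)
Your proof is correct and is essentially the paper's argument. The paper notes that $\Hom_A(U',M/U)\simeq\Hom_A(U,M/U)=0$, so the inclusion $U'\hookrightarrow M$ factors through $U$, and then concludes $U'=U$ by comparing dimensions. Composing with $\phi$ is the same step made explicit.
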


\begin{proof}
Let $j:U'\hookrightarrow M$ be the inclusion and
$p:M\twoheadrightarrow M/U$ the quotient map. Since $U'\simeq U$, we
have $\Hom_A(U',M/U)=0$. Thus $pj=0$, and $j$ factors through $U$:
\[
\begin{tikzcd}
 & U' \arrow[d,hook,"j"] \arrow[dl,dashed,"\overline j"'] & \\
 U \arrow[r,hook] & M \arrow[r,two heads,"p"] & M/U.
\end{tikzcd}
\]
Hence $U'\subseteq U$, and equality follows from their equal dimensions.
\end{proof}

\subsection{A Littlewood--Richardson formula for filtration counts}

We now count two-step brick chain filtrations whose first term has
dimension vector $\beta$, for a general representation of dimension
$\alpha$. We express the generic submodule count $N(\beta,\alpha)$
as a sum of squares of Littlewood--Richardson coefficients.

Let $\parts_{a,b}$ be the set of partitions
contained in the rectangle $b^a$, including the empty partition.
Thus its elements have at most $a$ rows and at most $b$ columns.
For a partition $\lambda$, let $s_\lambda$ be its Schur function, and
define the nonnegative integers $c(\lambda_1,\ldots,\lambda_r;\nu)$ by
\[
s_{\lambda_1}\cdots s_{\lambda_r}
   =\sum_\nu c(\lambda_1,\ldots,\lambda_r;\nu)s_\nu.
\]
Here $\lambda_1,\ldots,\lambda_r$ are the input partitions and $\nu$ is
the output partition. These are the Littlewood--Richardson coefficients
with any number of inputs. We first use them as tensor product
multiplicities for general linear groups; the hook-length estimate below uses
the same coefficients as restriction multiplicities for symmetric
groups. Put
\begin{equation}\label{eq:K}
 K(q,a,b)=
 \sum_{\substack{\nu,\lambda_1,\ldots,\lambda_{q-1}\in\parts_{a,b}}}
 c(\lambda_1,\ldots,\lambda_{q-1};\nu)^2.
\end{equation}

\begin{theorem}\label{thm:count}
For $q\geq3$ and $a,b\geq1$, there is a nonempty Zariski-open subset of
$\Rep(Q_q,\alpha)$ consisting of bricks $M$ with the following properties.
\begin{enumerate}
\item\label{item:filtration-count} The module $M$ has exactly $K(q,a,b)$
two-step brick chain filtrations whose first term has dimension vector
$\beta$.
\item\label{item:brick-factors} In every such filtration, the first term and the quotient are bricks.
\item\label{item:distinct-types} The first terms are pairwise nonisomorphic. In particular, these
filtrations have distinct types.
\end{enumerate}
\end{theorem}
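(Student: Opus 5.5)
Pick $\mathcal O$ from Proposition~\ref{prop:genericity}, taking $\mathcal U_\beta,\mathcal U_\gamma$ to be the brick loci in $\Rep(Q_q,\beta)$ and $\Rep(Q_q,\gamma)$. These loci are open, invariant under change of bases, and nonempty by Lemma~\ref{lem:brick-pairs}. Intersect $\mathcal O$ with the brick locus of $\Rep(Q_q,\alpha)$, which is open and nonempty by the same lemma. Since $\Rep(Q_q,\alpha)$ is irreducible, the intersection is a nonempty open set. For $M$ in it, every $U\in\Gr_\beta(M)$ is a brick with brick quotient $M/U$ and $\Hom(U,M/U)=0$. Hence $0\subsetneq U\subsetneq M$ is a two-step brick chain filtration of type $([U],[M/U])$, and $U\ne 0,M$ because $\beta,\gamma\ne0$. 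Conversely, every two-step brick chain filtration with first term of dimension vector $\beta$ is a point of $\Gr_\beta(M)$. The count in (1) is therefore $N(\beta,\alpha)$, and (2) holds.

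For (3), distinct $U,U'\in\Gr_\beta(M)$ are solid by Proposition~\ref{prop:genericity}\ref{item:generic-hom}. By Lemma~\ref{lem:unique-submodule} they cannot be isomorphic, so their types differ already in the first entry.

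The main obstacle is the identity $N(\beta,\alpha)=K(q,a,b)$. Since $\Gr_\beta(M)$ is finite and reduced, $N$ equals its degree, and I would compute this degree as an intersection number. Normalize $M$ using the first $q-1$ arrows. For general $M$ the block map $k^{a+(q-1)b}\to (k^{a+b})^{q-1}$ given by these arrows is expected to be generic, so after a change of basis it identifies the source $M_1$ with a direct sum. A cleaner route is to parametrize by the target subspace $U_2\in\Gr(a,M_2)$, where $M_2=k^{a+b}$. The subspace $U_1$ must lie in $W(U_2)=\bigcap_c M(c)^{-1}(U_2)$ and have dimension $a$. For generic arrows, $W(U_2)$ has expected dimension $(a+(q-1)b)-q b=a-b$, which is too small. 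So I would instead realize the condition as $U_1\in\Gr(a,M_1)$ with $M(c)U_1\subseteq U_2$ for all $c$, and compute in $\Gr(a,a+(q-1)b)\times\Gr(a,a+b)$.

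The condition for one arrow is the vanishing of a section of $\Hom(\mathcal S_1,\mathcal Q_2)$, where $\mathcal S_1$ is tautological of rank $a$ and $\mathcal Q_2$ is the quotient bundle of rank $b$. Its top Chern class is the Cauchy-type sum $\sum_\lambda s_\lambda(\mathcal S_1^\vee)s_{\lambda'}(\mathcal Q_2)$ over $\lambda\in\parts_{a,b}$. Multiplying the $q$ such classes and integrating gives $N$. The integral over $\Gr(a,a+b)$ pairs complementary Schur classes, and the integral over the larger Grassmannian extracts a Littlewood--Richardson coefficient. I expect that using the first $q-1$ arrows to split $M_1$ reduces the problem to $\Gr(a,a+b)$ alone. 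Each of those arrows then contributes $s_{\lambda_i}$, and the last arrow contributes a Cauchy sum that pairs $\prod s_{\lambda_i}$ with a dual class twice, producing $\sum_\nu c(\lambda_1,\dots,\lambda_{q-1};\nu)^2$. I would first check this bookkeeping for $q=3$, $a=b=1$ by direct count before trusting the general Schubert calculation. Reducedness lets the degree computed this way equal the number of points.
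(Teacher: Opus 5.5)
Your handling of everything except the count agrees with the paper. The choice of open set via Proposition~\ref{prop:genericity}\ref{item:generic-open} and the brick loci is the same, as is the reduction of (1) to the number $N(\beta,\alpha)$ of points of $\Gr_\beta(M)$, and parts (2) and (3) are fine. Your framing of $\Gr_\beta(M)$ as the zero scheme of a section of $\Hom(\mathcal S_1,\mathcal Q_2)^{\oplus q}$ on $G_1\times G_2=\Gr(a,a+(q-1)b)\times\Gr(a,a+b)$ is also a sound substitute for the paper's appeal to Crawley-Boevey. The rank $qab$ equals the dimension, and $G_1\times G_2$ is smooth, so a section with finite zero scheme is regular. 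Its degree is therefore $\int c_{ab}(\mathcal S_1^\vee\otimes\mathcal Q_2)^q$.

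The genuine gap is the identity $N(\beta,\alpha)=K(q,a,b)$, which you only conjecture. First, your top Chern class is misstated. It should be $\sum_{\lambda\in\parts_{a,b}}s_\lambda(\mathcal S_1^\vee)\,s_{(\overline\lambda)'}(\mathcal Q_2)$, i.e.\ $\sum_\lambda\sigma^{(1)}_\lambda\otimes\sigma^{(2)}_{\overline\lambda}$, using the rectangular complement. Your sum with $s_{\lambda'}$ is not homogeneous of degree $ab$, and in your own test case $q=3$, $a=b=1$ it gives $0$ instead of $3$.

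With the correct class, expanding the $q$th power gives
\[
 N(\beta,\alpha)=\sum_{\lambda_1,\ldots,\lambda_q\in\parts_{a,b}}
 c(\lambda_1,\ldots,\lambda_q;((q-1)b)^a)\,
 c(\overline\lambda_1,\ldots,\overline\lambda_q;b^a).
\]
This is a sum of products of two apparently different coefficients, not a sum of squares. The missing idea is the $\GL_a$-duality $V_{\overline\lambda}\simeq\det^b\otimes V_\lambda^*$. Combined with tensor--Hom adjunction, it shows that the two factors are equal, so $N(\beta,\alpha)=\sum_{\mu_1,\ldots,\mu_q}c(\mu_1,\ldots,\mu_q;b^a)^2$. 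The same duality then moves the last input into the output, $c(\mu_1,\ldots,\mu_q;b^a)=c(\mu_1,\ldots,\mu_{q-1};\overline\mu_q)$, and reindexing gives \eqref{eq:K}.

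Your proposed shortcut does not supply this. Reducing to $\Gr(a,a+b)$ via the first $q-1$ arrows would need $U_2\mapsto\bigcap_{c<q}f_c^{-1}(U_2)$ to be a rank-$a$ bundle. Its dimension, however, jumps along a degeneracy locus; already for $q=3$ and $a=b=2$ this locus is a curve in $\Gr(2,4)$. The claim that the last arrow pairs $\prod s_{\lambda_i}$ with a dual class twice is not derived from any computation.
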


Thus $\bcount_{k Q_q}(M)\geq K(q,a,b)$ on this open set.

\begin{proof}
Lemma~\ref{lem:brick-pairs} shows that the brick loci for
$\beta,\gamma$ and $\alpha$ are nonempty; their openness and invariance
were established above. Apply Proposition~\ref{prop:genericity}\ref{item:generic-open} to
the brick loci for $\beta$ and $\gamma$, and intersect the resulting
open set with the brick locus for $\alpha$. The intersection is nonempty
because $\Rep(Q_q,\alpha)$ is irreducible. For $M$ in it, every point
$U$ of $\Gr_\beta(M)$ gives bricks $U,M/U$ and satisfies
$\Hom_{k Q_q}(U,M/U)=0$ by Proposition~\ref{prop:genericity}\ref{item:generic-hom}.
Thus each point gives a two-step brick chain filtration and
$U\in\Sol_{k Q_q}(M)$.

(1) It remains to compute the number of points. We do this in two steps.

\textbf{Step 1: compute the submodule count by Schubert calculus.}
For $\rho\in\parts_{a,b}$, define its rectangular complement by
\[
 \overline\rho=(b-\rho_a,\ldots,b-\rho_1),
\]
where $\rho=(\rho_1,\ldots,\rho_a)$ is padded with zero parts.
The count takes place on the product
\[
 G_1=\Gr(a,a+(q-1)b),\qquad G_2=\Gr(a,a+b),
\]
which parametrizes the two vertex subspaces of a subrepresentation.
Recall that on $\Gr(a,a+c)$ the Schubert classes $\sigma_\lambda$
are indexed by partitions $\lambda\subseteq c^a$, with codimension
$|\lambda|$. The class $\sigma_{c^a}$ is the class of a point, and
multiplication is given by the Littlewood--Richardson coefficients;
terms indexed outside $c^a$ are zero. See \cite[Sections~1 and~3]{DSW}.
In particular, the coefficient
of the point class in a product is $c(\lambda_1,\ldots,\lambda_r;c^a)$.

Write $\sigma_\lambda^{(i)}$ for the class on $G_i$.
For an arrow map
$f:k^{a+(q-1)b}\to k^{a+b}$, let
\[
 D_f=\{(U_1,U_2)\in G_1\times G_2\mid f(U_1)\subseteq U_2\}.
\]
We give $D_f$ the closed subscheme structure defined by this incidence condition.
If the arrow maps of $M$ are $f_1,\ldots,f_q$, then
\[
 \Gr_\beta(M)=D_{f_1}\cap\cdots\cap D_{f_q}
 \quad\text{inside }G_1\times G_2.
\]
Crawley-Boevey proves the following two assertions
\cite[Theorem and proof, p.~365]{CB}; see also
\cite[Proposition~3]{DSW}. For a general arrow map $f$, the incidence
locus has class
\[
 [D_f]=\sum_{\lambda\in\parts_{a,b}}
       \sigma_\lambda^{(1)}\otimes\sigma_{\overline\lambda}^{(2)}
\]
in the Chow ring of $G_1\times G_2$. For a general tuple of arrow
maps $f_1,\ldots,f_q$, the cycle of their scheme-theoretic intersection
is the product of these classes. After further shrinking the open subset
of $\Rep(Q_q,\alpha)$ chosen at the start of the proof, we may assume
that both assertions hold for the arrow maps of $M$.
Since each arrow contributes the
same class, we obtain
\[
 [\Gr_\beta(M)]=\prod_{j=1}^q[D_{f_j}]
 =\left(\sum_{\lambda\in\parts_{a,b}}
       \sigma_\lambda^{(1)}\otimes\sigma_{\overline\lambda}^{(2)}\right)^q.
\]
By Proposition~\ref{prop:genericity}\ref{item:generic-count}, this
scheme is finite and reduced for general $M$, so its cycle has
coefficient one at each point. Its cardinality $N(\beta,\alpha)$ is therefore
its degree, the coefficient of the point class in the displayed expression.
Expanding the power and multiplying Schubert classes in each factor
therefore gives
\begin{equation}\label{eq:dsw-count}
 N(\beta,\alpha)=
 \sum_{\lambda_1,\ldots,\lambda_q\in\parts_{a,b}}
 c(\lambda_1,\ldots,\lambda_q;((q-1)b)^a)
 c(\overline\lambda_1,\ldots,\overline\lambda_q;b^a).
\end{equation}
The partitions $((q-1)b)^a$ and $b^a$ index the point classes in
$G_1$ and $G_2$. This explains both rectangles in the formula.

\textbf{Step 2: rewrite the count as a sum of squares.}
The expression in \eqref{eq:dsw-count} is an integer independent of the
algebraically closed field $k$. To simplify it, we use complex
representations of general linear groups to prove identities between
its Littlewood--Richardson coefficients.
Let $V_\lambda$ be the irreducible polynomial representation of
$\GL_a(\C)$ of highest weight $\lambda$, for a partition $\lambda$
with at most $a$ parts. Its character is $s_\lambda(x_1,\ldots,x_a)$;
thus $c(\lambda_1,\ldots,\lambda_r;\nu)$ is the multiplicity of
$V_\nu$ in $V_{\lambda_1}\otimes\cdots\otimes V_{\lambda_r}$
for partitions with at most $a$ parts; see \cite[Section~1]{PPY}.
For a nonnegative integer $c$, write $\det^c$ for the $c$th power of
the determinant representation. The rectangular partition $c^a$ satisfies
$V_{c^a}\simeq\det^c$, since both representations have highest weight
$(c,\ldots,c)$.
The dual $V_\lambda^*$ has highest weight
$(-\lambda_a,\ldots,-\lambda_1)$. Tensoring with $\det^b$ adds $b$
to every coordinate, giving the highest weight
$\overline\lambda$. Thus
\begin{equation}\label{eq:rectangle-duality}
 V_{\overline\lambda}\simeq\det^b\otimes V_\lambda^*.
\end{equation}
Put $X=V_{\lambda_1}\otimes\cdots\otimes V_{\lambda_q}$. Applying
\eqref{eq:rectangle-duality} to all $q$ factors gives
\[
 V_{\overline\lambda_1}\otimes\cdots\otimes V_{\overline\lambda_q}
   \simeq\det^{qb}\otimes X^*.
\]
Thus the second coefficient in \eqref{eq:dsw-count} is the multiplicity
of $\det^b$ in $\det^{qb}\otimes X^*$.
Tensor-Hom adjunction, followed by tensoring both arguments with
$\det^{-b}$, gives
\begin{align*}
 \Hom_{\GL_a}(\det^b,\det^{qb}\otimes X^*)
 &\simeq\Hom_{\GL_a}(\det^b\otimes X,\det^{qb})\\
 &\simeq\Hom_{\GL_a}(X,\det^{(q-1)b}).
\end{align*}
Since finite-dimensional representations of $\GL_a(\C)$ are completely
reducible, this dimension is the multiplicity of
$\det^{(q-1)b}$ in $X$, namely the first coefficient in
\eqref{eq:dsw-count}. Each product in that sum is therefore the square
of its second coefficient. Reindex by $\mu_i=\overline\lambda_i$ for
every $i$. Rectangular complementation is an involution of
$\parts_{a,b}$, so the new indices run over the same set, giving
\[
 N(\beta,\alpha)=
 \sum_{\mu_1,\ldots,\mu_q\in\parts_{a,b}}
 c(\mu_1,\ldots,\mu_q;b^a)^2.
\]
Finally, put $W=V_{\mu_1}\otimes\cdots\otimes V_{\mu_{q-1}}$.
Tensor-Hom adjunction and \eqref{eq:rectangle-duality} give
\begin{align*}
 c(\mu_1,\ldots,\mu_q;b^a)
 &=\dim\Hom_{\GL_a}(\det^b,W\otimes V_{\mu_q})\\
 &=\dim\Hom_{\GL_a}(\det^b\otimes V_{\mu_q}^*,W)\\
 &=c(\mu_1,\ldots,\mu_{q-1};\overline\mu_q).
\end{align*}
Reindex by $\lambda_i=\mu_i$ for $1\leq i<q$ and
$\nu=\overline\mu_q$. Since complementation permutes $\parts_{a,b}$,
the resulting sum is precisely \eqref{eq:K}, so
$N(\beta,\alpha)=K(q,a,b)$. The summand with
$\nu=\lambda_1=\cdots=\lambda_{q-1}=\varnothing$ equals one. We have
therefore proved $N(\beta,\alpha)=K(q,a,b)>0$.

(2) The open set was chosen so that every submodule of dimension
vector $\beta$ and its quotient are bricks.

(3) Every such submodule is solid in $M$, so
Lemma~\ref{lem:unique-submodule} shows that distinct first terms are
nonisomorphic. The first brick in the type is the first term itself
by (2), so the filtrations have distinct types.
\end{proof}

\begin{example}\label{ex:small-count}
For $a=b=1$, the elements of $\parts_{1,1}$ are $\varnothing$ and $(1)$.
The term with $\nu=\varnothing$ contributes one to \eqref{eq:K}. For
$\nu=(1)$, the contribution is $q-1$, since exactly one of
$\lambda_1,\ldots,\lambda_{q-1}$ equals $(1)$. Thus $K(q,1,1)=q$.
A general representation of dimension $(q,2)$ consequently has exactly
$q$ two-step brick chain filtrations whose first term has dimension vector
$(1,1)$. Since $q\geq3$, this shows that the $\tau$-tilting
finiteness assumption in Corollary~\ref{cor:two-simple-count} cannot
be omitted.
\end{example}

\subsection{A hook-length lower bound}

We now bound the number of brick chain filtrations supplied by
Theorem~\ref{thm:count}\ref{item:filtration-count}. We estimate its Littlewood--Richardson sum
$K(q,a,b)$ using dimensions of representations of symmetric groups and
the hook-length formula, without evaluating the sum explicitly.

For a partition $\nu$, let $|\nu|$ be its size and let $\nu'$ be its
transpose. The hook length of a cell $(i,j)\in\nu$ is
\[
 h_\nu(i,j)=\nu_i-j+\nu'_j-i+1,
 \qquad H(\nu)=\prod_{(i,j)\in\nu}h_\nu(i,j).
\]
Let $S^\nu$ be the complex Specht module indexed by $\nu$, and put
$f^\nu=\dim S^\nu$. This is also the number of standard Young tableaux
of shape $\nu$. The hook-length formula and the decomposition of the
regular representation of the symmetric group give
\begin{equation}\label{eq:specht-identities}
 f^\nu=\frac{|\nu|!}{H(\nu)},
 \qquad \sum_{\lambda\vdash d}(f^\lambda)^2=d!,
\end{equation}
respectively; see \cite[Sections~2.2--2.3]{PPY}.

The Littlewood--Richardson coefficients have a second role which
relates the count in \eqref{eq:K} to these dimensions. If
$|\nu|=d=e_1+\cdots+e_r$ with $e_i\geq0$, restriction of $S^\nu$ to the
subgroup $S_{e_1}\times\cdots\times S_{e_r}$ contains
$S^{\lambda_1}\otimes\cdots\otimes S^{\lambda_r}$ with multiplicity
$c(\lambda_1,\ldots,\lambda_r;\nu)$ for $\lambda_i\vdash e_i$.
Here each symmetric group acts
on its corresponding tensor factor, and $S_0$ is the trivial group.
This is the Littlewood--Richardson rule for representations of
symmetric groups; the case $r=2$ in \cite[Section~4.1]{PPY} gives the
general case by restriction along the successive Young subgroups
\[
 S_d\supseteq S_{e_1}\times S_{d-e_1}
 \supseteq S_{e_1}\times S_{e_2}\times S_{d-e_1-e_2}
 \supseteq\cdots.
\]
Taking dimensions yields
\begin{equation}\label{eq:restriction-dimensions}
 f^\nu=\sum_{\lambda_i\vdash e_i}
 c(\lambda_1,\ldots,\lambda_r;\nu)\prod_{i=1}^r f^{\lambda_i}.
\end{equation}
We use this equality to bound a sum of squares of the multiplicities.

\begin{proposition}\label{prop:hook-bound}
For any $\nu\in\parts_{a,b}$ of size $d$, one has
\begin{equation}\label{eq:hook-bound}
 K(q,a,b)\geq\frac{(q-1)^d d!}{H(\nu)^2}.
\end{equation}
\end{proposition}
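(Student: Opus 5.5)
We keep only the terms of \eqref{eq:K} whose output partition is the fixed $\nu$, and bound that partial sum by Cauchy--Schwarz against the restriction identity \eqref{eq:restriction-dimensions}. All summands of \eqref{eq:K} are nonnegative, so
\[
 K(q,a,b)\geq\sum_{\lambda_1,\ldots,\lambda_{q-1}\in\parts_{a,b}}c(\lambda_1,\ldots,\lambda_{q-1};\nu)^2 .
\]
A coefficient $c(\lambda_1,\ldots,\lambda_{q-1};\nu)$ can be nonzero only when each $\lambda_i\subseteq\nu$. This holds by iterating the two-input case, where $c(\lambda,\mu;\nu)\ne0$ forces $\lambda,\mu\subseteq\nu$. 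Since $\nu\in\parts_{a,b}$, every relevant $\lambda_i$ then lies in $\parts_{a,b}$. So the restriction to $\parts_{a,b}$ loses nothing, and the right side equals the sum over all tuples of partitions with $\sum_i|\lambda_i|=d$.

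Next, group these tuples by their size vector $e=(e_1,\ldots,e_{q-1})$ with $e_1+\cdots+e_{q-1}=d$. For a fixed $e$, apply Cauchy--Schwarz to \eqref{eq:restriction-dimensions} with $r=q-1$:
\[
 (f^\nu)^2\leq\Bigl(\sum_{\lambda_i\vdash e_i}c(\lambda_1,\ldots,\lambda_{q-1};\nu)^2\Bigr)\Bigl(\sum_{\lambda_i\vdash e_i}\prod_i(f^{\lambda_i})^2\Bigr).
\]
By \eqref{eq:specht-identities}, the second factor equals $\prod_i e_i!$. Hence the inner sum of squared coefficients for size vector $e$ is at least $(f^\nu)^2/\prod_i e_i!$.

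Summing over all compositions $e$ of $d$ into $q-1$ nonnegative parts, the multinomial theorem gives
\[
 \sum_e\frac{1}{\prod_i e_i!}=\frac{(q-1)^d}{d!}.
\]
Therefore
\[
 K(q,a,b)\geq(f^\nu)^2\,\frac{(q-1)^d}{d!}=\Bigl(\frac{d!}{H(\nu)}\Bigr)^2\frac{(q-1)^d}{d!}=\frac{(q-1)^d\,d!}{H(\nu)^2},
\]
using the hook-length formula in \eqref{eq:specht-identities}. This is \eqref{eq:hook-bound}.

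The only delicate point is the containment argument, which ensures the sum in \eqref{eq:K} really contains all tuples contributing to \eqref{eq:restriction-dimensions}. The rest is a direct combination of Cauchy--Schwarz, $\sum_\lambda(f^\lambda)^2=e!$, and the multinomial identity.
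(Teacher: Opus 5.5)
Your proposal is correct and follows essentially the same route as the paper: you restrict to terms with output $\nu$, use the containment of each $\lambda_i$ in $\nu$, apply Cauchy--Schwarz to \eqref{eq:restriction-dimensions} for each size vector, and finish with the multinomial theorem and the hook-length formula. The only difference is that you present the containment step before the Cauchy--Schwarz step rather than after it, which does not change the argument.
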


\begin{proof}
Put $r=q-1$ and fix nonnegative integers $e_i$ with
$e_1+\cdots+e_r=d$. Apply Cauchy--Schwarz to
\eqref{eq:restriction-dimensions}, then use
\eqref{eq:specht-identities}:
\begin{align*}
 (f^\nu)^2
 &\leq
 \left(\sum_{\lambda_i\vdash e_i}
     c(\lambda_1,\ldots,\lambda_r;\nu)^2\right)
 \left(\sum_{\lambda_i\vdash e_i}
     \prod_{i=1}^r(f^{\lambda_i})^2\right)\\
 &=\left(\sum_{\lambda_i\vdash e_i}
     c(\lambda_1,\ldots,\lambda_r;\nu)^2\right)
     \prod_{i=1}^r e_i!.
\end{align*}
Dividing by $\prod_i e_i!$ gives
\[
 \sum_{\lambda_i\vdash e_i}c(\lambda_1,\ldots,\lambda_r;\nu)^2
 \geq \frac{(f^\nu)^2}{e_1!\cdots e_r!}.
\]
For two inputs, the tableau form of the Littlewood--Richardson rule
\cite[Section~5.2, Proposition~3]{Fulton} counts tableaux on the skew diagram
$\nu/\lambda_1$. Thus
$c(\lambda_1,\lambda_2;\nu)\ne0$ requires
$\lambda_1\subseteq\nu$, and symmetry gives
$\lambda_2\subseteq\nu$. For several inputs, commutativity of Schur
functions lets us put any chosen $\lambda_i$ first. After reordering
so this is $\lambda_1$, expand successive products:
\[
 c(\lambda_1,\ldots,\lambda_r;\nu)
 =\sum_{\mu_2,\ldots,\mu_{r-1}}
   \prod_{j=2}^r c(\mu_{j-1},\lambda_j;\mu_j),
 \qquad \mu_1=\lambda_1,\quad\mu_r=\nu.
\]
All coefficients are nonnegative. If the left side is nonzero, some
nonzero summand therefore gives
\[
 \mu_1\subseteq\mu_2\subseteq\cdots\subseteq\mu_r,
\]
where each containment follows from the Littlewood--Richardson rule for
two factors. Hence every
$\lambda_i$ in a nonzero term above is contained in $\nu$, so belongs
to $\parts_{a,b}$.
The sets of tuples $(\lambda_1,\ldots,\lambda_r)$ for different
$(e_1,\ldots,e_r)$ are
disjoint. All summands in \eqref{eq:K} are nonnegative, so retaining the
fixed output $\nu$ and these disjoint tuples gives a lower bound for
$K(q,a,b)$. Summing the inequalities and using the multinomial theorem gives
\[
 K(q,a,b)\geq(f^\nu)^2
    \sum_{e_1+\cdots+e_r=d}\frac1{e_1!\cdots e_r!}
  =\frac{r^d(f^\nu)^2}{d!}.
\]
The hook-length formula in \eqref{eq:specht-identities} proves
\eqref{eq:hook-bound}.
\end{proof}

\subsection{Staircase partitions and quadratic growth}

We specialize to $q=3$ and $a=b=n$, so the modules under consideration
have dimension vector $(3n,2n)$ and composition length $5n$.
Applying the hook-length bound to the staircase partition
$(n,n-1,\ldots,1)$ gives a filtration count growing exponentially in
the square of this length.

\begin{theorem}\label{thm:growth}
Let $k$ be algebraically closed and $A=kQ_3$. There is a constant
$C>0$ such that, for every $n\geq1$,
a nonempty Zariski-open subset of $\Rep(Q_3,(3n,2n))$ consists of bricks
$M$ satisfying
\[
 \log\bcount_A(M)\geq(1-\log2)n^2-Cn.
\]
\end{theorem}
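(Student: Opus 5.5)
Take $q=3$ and $a=b=n$ in \eqref{eq:dimensions}, so that $\beta=(n,n)$, $\gamma=(2n,n)$ and $\alpha=(3n,2n)$. Theorem~\ref{thm:count} then gives a nonempty Zariski-open subset of $\Rep(Q_3,(3n,2n))$ consisting of bricks $M$ with $\bcount_A(M)\geq K(3,n,n)$. It therefore suffices to prove
\[
 \log K(3,n,n)\geq(1-\log2)n^2-Cn
\]
for a constant $C$ independent of $n$. Apply Proposition~\ref{prop:hook-bound} with $q-1=2$ and the staircase partition $\nu=\delta_n=(n,n-1,\ldots,1)$. It lies in $\parts_{n,n}$ and has size $d=n(n+1)/2$, so
\[
 \log K(3,n,n)\geq d\log2+\log d!-2\log H(\delta_n).
\]

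Next compute the hook product. In $\delta_n$, the cell $(i,j)$ has arm $n-i+1-j$ and leg $n-j+1-i$, so its hook length is $2(n+1-i-j)+1$. The cells with $i+j=s$ number $s-1$ and have hook length $2(n-s)+3$. Setting $t=n+2-s$, there are $n+1-t$ cells with hook length $2t-1$ for $t=1,\ldots,n$. Hence
\[
 \log H(\delta_n)=\sum_{t=1}^n(n+1-t)\log(2t-1).
\]

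The main step is asymptotic bookkeeping to error $O(n)$. First use $\log(2t-1)\leq\log 2+\log t$, which yields
\[
 \log H\leq d\log2+\sum_{t=1}^n(n+1-t)\log t=d\log2+\sum_{m=1}^n\log m!.
\]
By Stirling, $\log m!\leq m\log m-m+O(\log m)$, so $\sum_{m\le n}\log m!\leq \tfrac{n^2}{2}\log n-\tfrac34n^2+O(n\log n)$. Also $\log d!\geq d\log d-d$ with $d=n^2/2+O(n)$, which gives $\log d!\geq n^2\log n-\tfrac{n^2}{2}(1+\log2)+O(n\log n)$.

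Combining these estimates, the $n^2\log n$ terms cancel: $n^2\log n-2\cdot\tfrac{n^2}{2}\log n=0$. The $n^2$ coefficient is
\[
 \tfrac{\log 2}{2}-\tfrac{1+\log2}{2}-\log2+\tfrac32=1-\log2,
\]
matching the claimed constant. The error terms as written are $O(n\log n)$, not $O(n)$. This is the point requiring care, and I expect it to be the main obstacle: the $n\log n$ contributions must cancel exactly.

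To achieve this, I will track the next order precisely rather than using crude bounds. Use the exact identity $\sum_{m\le n}\log m!=\sum_t(n+1-t)\log t$ together with Euler--Maclaurin for $\sum_t(n+1-t)\log t$, giving $\tfrac{n^2}{2}\log n-\tfrac34 n^2+n\log n+O(n)$. Use Stirling for $\log d!$ with $d=\binom{n+1}{2}$, which contributes $d\log d=\tfrac{n(n+1)}{2}(2\log n-\log2+O(1/n))$ and hence an $n\log n$ term of coefficient $1$. The term $-2\log H$ then contributes $-2n\log n+n\log n$ from the two pieces, so the $n\log n$ terms cancel and the remainder is $O(n)$. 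Finally, the small values of $n$ are absorbed by enlarging $C$.
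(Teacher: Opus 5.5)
Your reduction matches the paper's. You apply Theorem~\ref{thm:count} with $q=3$, $a=b=n$, then Proposition~\ref{prop:hook-bound} with $\delta_n$, and use $\log H(\delta_n)=\sum_{t=1}^n(n+1-t)\log(2t-1)$; your leading coefficient $1-\log2$ is also right. The genuine gap is the $n\log n$ term, and your first estimate $\log(2t-1)\leq\log2+\log t$ creates it.

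Note that $\sum_{t=1}^n(n+1-t)\log(2t-1)=\sum_{m=1}^n\log\bigl((2m-1)!!\bigr)$. Your bound $d\log2+\sum_{m\le n}\log m!$ equals $\sum_{m=1}^n\log\bigl((2m)!!\bigr)$, so it replaces each $(2m-1)!!$ by $(2m)!!=2^mm!$. Since $(2m)!!/(2m-1)!!=4^m/\binom{2m}{m}\sim\sqrt{\pi m}$, each term is overestimated by $\tfrac12\log m+O(1)$, and in total
\[
 d\log2+\sum_{m\le n}\log m!-\log H(\delta_n)=\frac n2\log n+O(n).
\]
Consider the inequality you actually set up, $\log K(3,n,n)\geq -d\log2+\log d!-2\sum_{m\le n}\log m!$. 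Its only $n\log n$ contributions are $+n\log n$ from $\log d!$ and $-2n\log n$ from $-2\sum_{m\le n}\log m!$; the term $d\log2$ has none. They leave $-n\log n$, so you only obtain $\log\bcount_A(M)\geq(1-\log2)n^2-n\log n-O(n)$, which is not of the claimed form. The compensating $+n\log n$ you attribute to $-2\log H$ is exactly $-2$ times the correction $\sum_t(n+1-t)\log(1-\frac1{2t})=-\frac n2\log n+O(n)$ that you discarded. No refinement of Euler--Maclaurin for $\sum_t(n+1-t)\log t$ can restore it.

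To repair the argument, keep $\log(2t-1)$ exact. Stirling applied to $(2m-1)!!=(2m)!/(2^mm!)$ gives $\log\bigl((2m-1)!!\bigr)=m\log m+(\log2-1)m+\tfrac12\log2+O(1/m)$, with no $\tfrac12\log m$ term. Summing, $\log H(\delta_n)=\tfrac{n^2}2\log n+(\tfrac{\log2}2-\tfrac34)n^2+\tfrac n2\log n+O(n)$. Now both the $n^2\log n$ and the $n\log n$ terms cancel, leaving $(1-\log2)n^2+O(n)$.

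The paper packages the same computation through the exact ratio $L_n/L_{n-1}=2^n\prod_{j=1}^n(d_{n-1}+j)/((2n-1)!!)^2$, whose logarithm is $2(1-\log2)n+O(1)$, and then sums over $n$. Using $(2n-1)!!$ itself, rather than an upper bound for it, is exactly what keeps the error at $O(n)$.
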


\begin{proof}
Apply Theorem~\ref{thm:count}\ref{item:filtration-count} with $q=3$ and $a=b=n$. The staircase
$\delta_n=(n,n-1,\ldots,1)$ lies in $\parts_{n,n}$ and has size
$d_n=n(n+1)/2$. At its cell $(i,j)$ the hook length is
$2(n+1-i-j)+1$. For $0\leq t\leq n-1$, the diagonal
$i+j=n+1-t$ has $n-t$ cells, all with hook length $2t+1$. Hence
\[
 H_n:=H(\delta_n)=\prod_{j=0}^{n-1}(2j+1)^{n-j},
 \qquad
 K(3,n,n)\geq L_n:=\frac{2^{d_n}d_n!}{H_n^2}.
\]
We claim that
\begin{equation}\label{eq:staircase-asymptotic}
 \log L_n=(1-\log2)n^2+O(n).
\end{equation}
Set $d_0=0$ and $H_0=L_0=1$. Since
$H_n/H_{n-1}=(2n-1)!!$, the exact ratio is
\begin{equation}\label{eq:staircase-ratio}
 \frac{L_n}{L_{n-1}}
   =\frac{2^n\prod_{j=1}^n(d_{n-1}+j)}{((2n-1)!!)^2}.
\end{equation}
For $n\geq2$, all factors $d_{n-1}+j$ equal
$(n^2/2)(1+O(1/n))$, uniformly for $1\leq j\leq n$. Therefore
\[
 \log\prod_{j=1}^n(d_{n-1}+j)
     =n(2\log n-\log2)+O(1).
\]
Use Stirling's formula with its logarithmic term:
\[
 \log(n!)=(n+\tfrac12)\log n-n+\tfrac12\log(2\pi)+O(n^{-1}).
\]
Since $(2n-1)!!=(2n)!/(2^n n!)$, subtraction gives
\begin{align*}
 \log((2n-1)!!)
 &=\log((2n)!)-n\log2-\log(n!)\\
 &=n\log n+n\log2-n+\tfrac12\log2+O(n^{-1}).
\end{align*}
Substituting in \eqref{eq:staircase-ratio} yields
$\log(L_n/L_{n-1})=2(1-\log2)n+O(1)$.
Summation proves \eqref{eq:staircase-asymptotic}, and
$\bcount_A(M)\geq K(3,n,n)\geq L_n$ proves the theorem.
\end{proof}

For these modules $m=\dim_k M=\ell(M)=5n$, so the lower bound becomes
\[
 \log\bcount_A(M)\geq\frac{1-\log2}{25}m^2-C'm
\]
for a constant $C'>0$. Together with Theorem~\ref{thm:uniform} and
Corollary~\ref{cor:uniform-length}, this shows that the quadratic order
of the exponent is necessary in both dimension and composition length.
Also, $\log(m!)=O(m\log m)$, so these modules have more than $m!$
filtrations for all sufficiently large $n$. This gives a family
answering Question~\ref{ques:ringel}(2) affirmatively.

\begin{remark}
Fix an algebraically closed field $k$, and let $F(m)$ be the supremum
of $\log\bcount_A(M)$ over finite-dimensional $k$-algebras $A$ and
$A$-modules $M$ of composition length $m$. Every such algebra is split,
so Corollary~\ref{cor:uniform-length} and Theorem~\ref{thm:growth} show
that
\[
 \frac{1-\log2}{25}
 \leq \limsup_{m\to\infty}\frac{F(m)}{m^2}
 \leq \log2.
\]
Determining the value of this limit superior remains open.
\end{remark}

\subsection{A module of length 244 with more than \texorpdfstring{$244!$}{244!} filtrations}

The modules in Theorem~\ref{thm:count} have length $2a+qb$.
We choose $q,a,b$ to obtain a module of length $244$ with more than
$244!$ brick chain filtrations.
For a partition $\nu\subseteq b^a$ of size $d$, these modules have
more than $(2a+qb)!$ brick chain filtrations whenever
\[
 \frac{(q-1)^d d!}{H(\nu)^2}>(2a+qb)!.
\]
Increasing $q$ increases both the factor $(q-1)^d$ in the lower bound
and the module length $2a+qb$. The choice $q=5$, $a=47$, $b=30$
and the partition below satisfies this inequality, giving the stated
length. Appendix~\ref{sec:certificate} verifies the inequality by
exact integer arithmetic.
\begin{corollary}\label{cor:244}
A nonempty Zariski-open subset of $\Rep(Q_5,(167,77))$ consists of
bricks $M$ with more than $16\cdot244!$ two-step brick chain
filtrations. These modules have composition length $244$.
\end{corollary}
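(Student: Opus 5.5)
We will apply Theorem~\ref{thm:count} with $q=5$, $a=47$, $b=30$. Then $\beta=(47,47)$ and $\gamma=(4\cdot30,30)=(120,30)$, so $\alpha=(167,77)$. The composition length is $167+77=244=2a+qb$. Theorem~\ref{thm:count} gives a nonempty Zariski-open subset of $\Rep(Q_5,(167,77))$ consisting of bricks $M$ with exactly $K(5,47,30)$ two-step brick chain filtrations whose first term has dimension vector $\beta$. Hence $\bcount_{kQ_5}(M)\geq K(5,47,30)$, and it suffices to show $K(5,47,30)>16\cdot244!$.

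For this we apply Proposition~\ref{prop:hook-bound}. For any partition $\nu\subseteq 30^{47}$ of size $d$ it gives
\[
 K(5,47,30)\geq\frac{4^d\,d!}{H(\nu)^2}.
\]
We choose a specific partition $\nu$ in the rectangle, roughly a staircase-like or balanced shape, as recorded in Appendix~\ref{sec:certificate}. The shape should keep the hook product small relative to $4^d d!$, since by the hook-length formula the bound equals $4^d (f^\nu)^2/d!$. The inequality
\[
 4^d\,d!>16\cdot244!\cdot H(\nu)^2
\]
is then a statement about explicit integers. It is verified by exact integer arithmetic: compute each hook length $h_\nu(i,j)=\nu_i-j+\nu'_j-i+1$, form $H(\nu)$, and compare both sides.

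The main obstacle is choosing $\nu$ so that the inequality actually holds; the remaining steps are direct citations. A quick Stirling heuristic, in the spirit of the proof of Theorem~\ref{thm:growth}, can guide the choice. We want $\log(4^d(f^\nu)^2/d!)$ to exceed $\log(244!)+\log16\approx 1.1\times10^3$. Staircase-type shapes inside $30^{47}$ with $d$ near several hundred make $(f^\nu)^2/d!$ large, since $f^\nu$ is close to $\sqrt{d!}$ up to exponential factors. The $4^d$ factor then supplies the margin. The exact certificate, rather than asymptotics, settles the strict inequality, and the factor $16$ is absorbed into the slack.
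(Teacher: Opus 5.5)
Your skeleton matches the paper's proof exactly. You apply Theorem~\ref{thm:count} with $q=5$, $a=47$, $b=30$, then Proposition~\ref{prop:hook-bound} with one explicit $\nu\subseteq 30^{47}$, then an exact integer comparison. The gap is that the one non-formal ingredient, the partition $\nu$, is never named, and the guidance you give for choosing it cannot work. By \eqref{eq:specht-identities}, $d!/H(\nu)^2=(f^\nu)^2/d!\leq 1$, so the bound of Proposition~\ref{prop:hook-bound} never exceeds $4^d$. You therefore need $4^{d-2}>244!$, which forces $d\geq 797$. Every staircase inside the $47\times 30$ rectangle has size at most $1+2+\cdots+30=465$. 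So no staircase-like shape, and nothing with $d$ ``near several hundred'', can give the inequality, however small its hook product. Deferring to Appendix~\ref{sec:certificate} while describing the shape this way does not close the gap: the description contradicts what any valid certificate must look like.

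The paper instead takes the complement of a staircase in the rectangle, $\nu=(30^{24},29,28,\ldots,7)$. Its size is $47\cdot 30-23\cdot 24/2=1134$, so $4^d$ is large enough to outweigh the loss $(f^\nu)^2/d!$, which is about $e^{-468}$ here. The margin is thin. Appendix~\ref{sec:certificate} shows that $4^{1134}\,1134!/(244!\,H(\nu)^2)$ lies strictly between $16$ and $17$. So the factor $16$ in the statement is essentially all the available slack, not something absorbed into it. With this $\nu$ and that exact check inserted, your argument becomes the paper's proof.
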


\begin{proof}
In Theorem~\ref{thm:count}\ref{item:filtration-count}, take $q=5$, $a=47$ and $b=30$.
Then $\beta=(47,47)$, $\gamma=(120,30)$ and $\alpha=(167,77)$.
Let
\[
 \nu=(\underbrace{30,\ldots,30}_{24\text{ parts}},29,28,\ldots,7),
 \qquad |\nu|=1134.
\]
This is the $47$-row, $30$-column rectangle with a staircase of
$23$ rows removed, so $|\nu|=47\cdot30-23\cdot24/2$.
Proposition~\ref{prop:hook-bound} gives
\[
 K(5,47,30)\geq\frac{4^{1134}\,1134!}{H(\nu)^2}
                   >16\cdot244!.
\]
The strict inequality is the integer calculation in
Appendix~\ref{sec:certificate}. Theorem~\ref{thm:count}\ref{item:filtration-count} supplies the
required open set, and $167+77=244$ gives the composition length.
\end{proof}

Theorem~\ref{thm:count}\ref{item:distinct-types} shows that the types of the counted filtrations
are distinct. Thus this example also has more than $16\cdot244!$ types
of brick chain filtrations.

\appendix
\section{Verification of the length-244 inequality}\label{sec:certificate}

We verify the integer inequality used in the proof of
Corollary~\ref{cor:244}. To check the hook product entering that
inequality, we compute it both from the individual hook lengths and
from the following product formula.
For a partition $\nu$ with $r$ parts, put
$l_i=\nu_i+r-i$. Then
\begin{equation}\label{eq:hook-vandermonde}
 H(\nu)=\frac{\prod_{i=1}^r l_i!}
               {\prod_{1\leq i<j\leq r}(l_i-l_j)}.
\end{equation}
\begin{proof}[Proof of \eqref{eq:hook-vandermonde}]
For a cell $(i,j)$, put $t_j=r+j-1-\nu'_j$. Then
$h_\nu(i,j)=l_i-t_j$. For fixed $i$, we will partition
$\{0,\ldots,l_i-1\}$ into the values $t_j$ for cells in row $i$
and the values $l_u$ for rows $u>i$.
The integers $t_j$ with
$1\leq j\leq\nu_i$ are strictly increasing because
$t_{j+1}-t_j=1+\nu'_j-\nu'_{j+1}\geq1$. Their endpoints satisfy
\[
 t_1=r-\nu'_1\geq0,\qquad
 t_{\nu_i}=r+\nu_i-1-\nu'_{\nu_i}
   \leq r+\nu_i-1-i=l_i-1.
\]
The sequence $(l_u)_u$ is strictly decreasing, since
$l_u-l_{u+1}=\nu_u-\nu_{u+1}+1\geq1$, and its terms are nonnegative.
Thus the $r-i$ integers $l_u$ with $u>i$ also lie in $0,\ldots,l_i-1$.
They are disjoint from the $t_j$: if $\nu_u\geq j$, then
$u\leq\nu'_j$ and $l_u>t_j$; if $\nu_u<j$, then
$u>\nu'_j$ and $l_u<t_j$.
Their total number is $\nu_i+r-i=l_i$, so together they fill this
range. Subtracting the integers $t_j$ and $l_u$ from $l_i$ shows that
the hooks in row $i$
are $1,\ldots,l_i$ with the values $l_i-l_u$ for $u>i$ removed.
Multiplying their product over all rows proves
\eqref{eq:hook-vandermonde}.
\end{proof}

For the partition used in the proof of Corollary~\ref{cor:244},
the set of $l_i$ is
\[
 \{53,54,\ldots,76\}\ \cup\ \{7,9,\ldots,51\}.
\]
Computing $H(\nu)$ either directly from its cell hooks or from
\eqref{eq:hook-vandermonde} gives the same integer. The agreement is an
internal check on both the partition data and the product calculation.
Integer division gives
\begin{equation}\label{eq:certificate}
 16<\frac{4^{1134}\,1134!}{244!\,H(\nu)^2}<17.
\end{equation}

The following program, for Python 3.8 or later, verifies both
expressions for $H(\nu)$ and
\eqref{eq:certificate}. It uses exact integer arithmetic throughout.

\begingroup\small
\begin{verbatim}
from math import factorial, prod
import json

nu = [30] * 24 + list(range(29, 6, -1))
assert len(nu) == 47 and sum(nu) == 1134
columns = [sum(row >= j for row in nu) for j in range(1, 31)]
hooks = [nu[i] - j + columns[j] - i - 1
         for i in range(47) for j in range(nu[i])]
hook_product = prod(hooks)

shifted = [nu[i] + 46 - i for i in range(47)]
assert set(shifted) == set(range(53, 77)) | set(range(7, 52, 2))
vandermonde = prod(shifted[i] - shifted[j]
                  for i in range(47) for j in range(i + 1, 47))
assert hook_product * vandermonde == prod(factorial(x) for x in shifted)

numerator = 4 ** 1134 * factorial(1134)
denominator = factorial(244) * hook_product ** 2
quotient, remainder = divmod(numerator, denominator)
strict_bounds = 16 * denominator < numerator < 17 * denominator
assert quotient == 16 and 0 < remainder < denominator and strict_bounds
print(json.dumps({"partition_size": 1134, "module_length": 244,
                  "floor_of_ratio": quotient, "strict_bounds": strict_bounds}))
\end{verbatim}
\endgroup

\end{document}